\documentclass{amsart}
\usepackage[svgnames]{xcolor}
\usepackage{amsfonts}
\usepackage{amsmath}
\usepackage{amsthm}
\usepackage{amssymb}
\usepackage{geometry}
\usepackage{thmtools}
\usepackage{thm-restate}
\usepackage{verbatim}
\usepackage{enumerate}
\usepackage[shortlabels]{enumitem}
\usepackage{microtype}
\usepackage{tikz-cd}
\usepackage{csquotes}
\usepackage{booktabs}
\usepackage{centernot}
\usepackage{mathtools}
\usepackage{xparse}
\usepackage{graphicx}
\usepackage{kpfonts}
\usepackage{comment}

\usetikzlibrary{knots}

\tikzset{knot/.style={double=#1,double distance=1pt,line width=2pt,white}}

\usepackage[backend=bibtex,style=alphabetic]{biblatex}
\bibliography{references.bib}

\usepackage{hyperref}
\usepackage[capitalise, noabbrev]{cleveref}

\PassOptionsToPackage{hyphens}{url}\usepackage{hyperref}

\newcounter{alltheorems}
\numberwithin{alltheorems}{section}
\newcounter{theoremtheorems}

\theoremstyle{plain}

\theoremstyle{definition}

\newtheorem{df}[alltheorems]{Definition}
\newtheorem{example}[alltheorems]{Example}
\newtheorem{construction}[alltheorems]{Construction}
\newtheorem{lemma}[alltheorems]{Lemma}

\newtheorem{corollary}[alltheorems]{Corollary}
\newtheorem{prop}[alltheorems]{Proposition}

\newtheorem{theorem}[theoremtheorems]{Theorem}

\theoremstyle{remark}

\newlength{\perspective}

\newcommand{\cube}{C}
\newcommand{\naturals}{\mathbb N}
\newcommand{\integer}{\mathbb Z}

\newcommand{\real}{\mathbb R}

\newcommand{\smashp}{\wedge}

\newcommand{\id}{\text{id}}
\newcommand{\tensor}{\otimes}

\newcommand{\disjointunion}{\bigsqcup}

\newcommand{\cfcrealization}[1]{\abs{\abs{#1}}}
\newcommand{\khovanov}{\mathcal X}
\newcommand{\tops}{\mathrm{Top}}

\newcommand{\two}[1]{\underline{2}^{#1}}

\newcommand{\twon}{\two{n}}
\newcommand{\interior}[1]{\mathring{#1}}
\newcommand{\stars}{\mathrm{Stars}}

\newcommand{\cell}{\mathrm{EX}}

% arrows
\newcommand{\xto}{\xrightarrow}
\newcommand{\xot}{\xleftarrow}

\newcommand{\into}{\hookrightarrow}

\newcommand{\transform}{\rightrightarrows}

\newcommand{\ot}{\leftarrow}
\newcommand{\acts}{.}
\newcommand{\card}{\#}
\newcommand{\xinto}[1]{\xhookrightarrow{#1}}

\newcommand{\suchthat}{\mathrel{|}}

\DeclareMathOperator*{\directsum}{\bigoplus}
\DeclareMathOperator{\ext}{Ext}
\DeclareMathOperator{\kh}{Kh}
\DeclareMathOperator{\ekh}{EKh}
\DeclareMathOperator{\ckh}{CKh}
\DeclareMathOperator{\sphere}{S}

\DeclareMathOperator{\hocolim}{hocolim}

\DeclareMathOperator*{\im}{im}

\DeclareMathOperator*{\colim}{colim}
\DeclareMathOperator*{\coeq}{coeq}

\DeclareMathOperator{\setsum}{\cup}

\DeclareMathOperator*{\setintersection}{\bigcap}

\DeclareMathOperator*{\ob}{ob}
\DeclareMathOperator{\gr}{gr}

\DeclareMathOperator{\maps}{Map}
\DeclareMathOperator{\susp}{\Sigma}
\DeclareMathOperator{\modulispace}{\mathcal M}
\DeclareMathOperator{\burnside}{\mathcal B}
\DeclareMathOperator{\disjoint}{\sqcup}
\DeclareMathOperator{\permutohedron}{\Pi}
\DeclareMathOperator{\reps}{Rep}
\DeclareMathOperator{\representationring}{RO}

\DeclareMathOperator{\abelian}{Ab}
\DeclareMathOperator{\cats}{Cat}

\DeclareMathOperator{\homeo}{Homeo}
\DeclareMathOperator{\free}{\mathcal F}
\DeclareMathOperator{\conf}{Conf}
\DeclareMathOperator{\homs}{Hom}

\newcommand{\cone}[1]{\tilde{#1}}

\DeclarePairedDelimiter{\abs}{\lvert}{\rvert}
\DeclarePairedDelimiter{\corner}{\langle}{\rangle}
\newcommand{\cornered}[1]{\corner{#1}}

\NewDocumentCommand{\restrict}{m m O{}}{#1|_{#2}^{#3}}

\excludecomment{versiona}

\begin{document}

\title{Equivariant Khovanov homotopy types}
\author{Jakub Paliga}
\address{Wydział Matematyki, Informatyki i Mechaniki UW, Warszawa, ul. Banacha 2 09-027}
\email{j.paliga@uw.edu.pl}

\begin{abstract}
  We investigate external group actions on homotopy coherent diagrams. This is used to prove an equivalence between realizations of equivariant cubical flow categories and external actions on Burnside functors. In particular, the results imply that the equivariant Khovanov homotopy types defined by \cite{borodzik2021khovanov} and \cite{stoffregen2018localization} are equivariantly stably homotopy equivalent.
\end{abstract}

\maketitle

%%% Local Variables:
%%% mode: latex
%%% TeX-master: "../main"
%%% End:

Khovanov homology was introduced in \cite{MR1740682} as a categorification of the Jones polynomial, with decategorification by way of graded Euler characteristic. Building on the work of Cohen-Jones-Segal in \cite{cjs-floers}, Lipshitz and Sarkar defined in \cite{ls2014} a space-level refinement of Khovanov homology. This takes the form of a CW spectrum \(\khovanov_{Kh}(L) = \bigvee_j \khovanov^j_{Kh}(L)\), such that for any \(j\), the cellular cochain complex of \(\khovanov^j_{Kh}(L)\) is isomorphic to the Khovanov complex \(\ckh^{\bullet, j}(L)\) in quantum grading \(j\). Among its uses, it allows for the definition of a stronger s-invariant. \cite{MR3189434}

In the case of links equipped with symmetries, it is expected that the spectra \(\khovanov(L)\) carry additional data. For periodic links, an equivariant Khovanov homotopy type was defined by \cite{borodzik2021khovanov}, who introduced equivariant cubical flow categories for the purpose. At the same time, \cite{stoffregen2018localization} proposed a different notion of equivariant Khovanov homotopy type of a periodic link, using external actions on Burnside functors. Both approaches furnish localization results relating the Khovanov homotopy type of a periodic link to the annular Khovanov homotopy type of its quotient, resulting in periodicity criteria. A difference persists in that \cite{borodzik2021khovanov} identified the Borel cohomology of their spectrum with equivariant Khovanov homology as defined by Politarczyk in \cite{politarczyk_equivariant_2019}. It has been an open question whether the equivariant spectra defined in \cite{borodzik2021khovanov} and \cite{stoffregen2018localization} are equivalent.

This paper answers the question in the affirmative. Namely, given a periodic link diagram \(D\), consider the equivariant spectra \(\khovanov_{SZ}(D)\) and \(\khovanov_{BPS}(D)\) defined by \cite{stoffregen2018localization} and \cite{borodzik2021khovanov}, respectively. We show the following.
\begin{theorem}
  \label{bigtheorem:khovanov}
  There is an equivariant stable homotopy equivalence 
  \[\khovanov_{SZ}(D) \to \khovanov_{BPS}(D).\]
\end{theorem}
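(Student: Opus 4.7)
The plan is to deduce the theorem from a general comparison result between two models of equivariant spectra built from cube-shaped combinatorial data, and then to verify that the Khovanov inputs for $\khovanov_{SZ}(D)$ and $\khovanov_{BPS}(D)$ fit into this comparison. The abstract foreshadows exactly this framework: external group actions on homotopy coherent diagrams should form a common bridge, with one side producing the Burnside-functor realization used in \cite{stoffregen2018localization} and the other side producing the cubical-flow-category realization used in \cite{borodzik2021khovanov}. So the first thing I would do is fix a $G$-action on a suitable diagram shape (for the involutive case, an action on the cube category $\twon$ coming from the periodicity of $D$), and define \emph{external $G$-actions} on Burnside functors $F\colon \twon \to \burnside$ and on equivariant cubical flow categories $\mathcal C$, in a way that restricts to the definitions of \cite{stoffregen2018localization} and \cite{borodzik2021khovanov} respectively.

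Second, I would prove the backbone technical theorem promised by the abstract, namely a natural equivariant stable equivalence
\[\realization{\mathcal C_F}_{G} \simeq \realization{F}_{G}\]
identifying, for any externally $G$-equivariant Burnside functor $F$, the Lipshitz--Sarkar--type Burnside realization of \cite{stoffregen2018localization} with the geometric realization of the associated equivariant cubical flow category $\mathcal C_F$ used in \cite{borodzik2021khovanov}. In the non-equivariant case this comparison goes back to the fact that both constructions produce CW spectra whose cellular cochains are the totalization of the underlying cube, and the natural map between them is built cell by cell via framed moduli spaces. The equivariant refinement should run by keeping track of the $G$-action on the framed flow category coherently: the realization of an external $G$-Burnside functor is assembled from $G$-equivariant cells whose attaching maps are controlled by the same moduli data, so one gets a $G$-CW map and checks equivalence on underlying spectra and on all fixed points. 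This step is where the main obstacle lies, since \enquote{homotopy coherent} $G$-actions do not in general strictify and one must use the framework of external actions (acting on the diagram shape as well as the values) to avoid having to choose $G$-invariant perturbations of moduli spaces or framings.

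Third, I would check that the Khovanov cube $F_{Kh}$ of a periodic diagram $D$ carries a canonical external $\integer/p$-action in the above sense, and that this action is precisely what is used on the Burnside side by \cite{stoffregen2018localization} to define $\khovanov_{SZ}(D)$ and on the flow-category side by \cite{borodzik2021khovanov} to define $\khovanov_{BPS}(D)$. Both constructions start from the Khovanov cube of resolutions with its link-symmetry action, so this identification of inputs is formal: one unpacks the definitions and matches the data on objects, morphisms, and the $G$-coherences that record the equivariant structure. Once inputs are matched, the theorem follows by applying the general realization equivalence to $F = F_{Kh}(D)$, splitting across quantum gradings as in the non-equivariant case, and using that both constructions pass to $G$-CW spectra in the same stable equivariant category. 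The potentially delicate bookkeeping is the quantum grading split and the verification that the equivariant structures agree up to coherent isomorphism, rather than just up to homotopy, but this is exactly what the external-action formalism is designed to handle.
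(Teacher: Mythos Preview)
Your proposal is correct and follows essentially the same three-step architecture as the paper: establish a combinatorial equivalence between external actions on Burnside functors and equivariant cubical flow categories, prove a general equivariant realization comparison (checked on all fixed points via the cellular/Whitehead argument you describe), and then verify that the Khovanov inputs for a periodic diagram match under this dictionary. The only structural detail you do not anticipate is that the paper routes the combinatorial comparison through an intermediate formalism due to Musyt, proving SZ $\leftrightarrow$ Musyt $\leftrightarrow$ BPS separately rather than comparing the Stoffregen--Zhang and Borodzik--Politarczyk--Silvero data directly; this is a bookkeeping convenience rather than a different idea.
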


The paper's structure is as follows. \Cref{sec:prereqs} introduces the cube category and the prerequisites on equivariant topology. In \cref{sec:hkoh} we recall several definitions of homotopy coherent diagrams and relate to them the concept of external action introduced in \cite{stoffregen2018localization}. \Cref{sec:flowcats} serves to describe equivariant cubical flow categories. In particular, in \cref{sec: cube flow cat} we identify the equivariant cube flow category as the free topological category on the equivariant cube \((\two{n})^m\). Burnside functors together with a notion of external action appropriate to them are introduced in \cref{sec:burnside}. 
In \cref{sec:sp-ref} we introduce configurations of stars and via a Pontrjagin-Thom-type construction associate to them equivariant maps of spheres. Those configurations are used to define geometric realizations of external actions on Burnside functors. This follows the constructions of \cite{stoffregen2018localization}, albeit allowing for more general shapes.

In \cref{sec:g-cats-external-actions} we compare the definitions of external action on Burnside functors due to \cite{musyt} and \cite{stoffregen2018localization}. In a series of comparison results, we relate those to the notion of an equivariant cubical flow category, culminating in
\begin{restatable*}{theorem}{combtheorem}
  \label{combtheorem}
  The data of an equivariant cubical flow category \((\mathcal C, f \colon \susp^V \mathcal C \to \cube_\sigma(n))\) is equivalent to that of a stable Burnside functor \((V, F \colon \two{n} \to \burnside)\) with external action.
\end{restatable*}

Passing to geometric realizations, \Cref{sec:equivalence-result} uses the results of the two previous sections to prove
\begin{restatable*}{theorem}{bigtheorem}
  \label{bigtheorem}
  Let \(\left(\mathcal C, f \colon \mathcal C \to \mathcal \cube_{\sigma}(n)\right)\) be an equivariant cubical flow category and let \(F \colon \two{n} \to \mathcal B\) be the corresponding Burnside functor with an external action. Then there is an equivariant stable homotopy equivalence \(\cfcrealization{\mathcal C} \cong \abs{F}.\)
\end{restatable*}

We finish in \cref{sec:khovanov-periodic} by describing the knot-theoretic context to which we apply the paper's results. Namely, we recall the definition of Khovanov homology by way of the Burnside functor associated to a link diagram; in the context of actions induced on a periodic diagram, \cref{bigtheorem:khovanov} is exhibited as a formal consequence of \cref{bigtheorem}.

\textbf{Acknowledgements.} This work is part of the author's thesis at University of Warsaw, supervised by Maciej Borodzik; the author is grateful for his supervisor's constant guidance. The author benefited from talks with Andrew Lobb, Dirk Schuetz, and Wojciech Politarczyk, as well as form communications with Robert Lipshitz. Part of the work was done during the author's stay at MSU, supported by IDUB Action IV.1.2. The author is grateful to Matt Stoffregen for his hospitality and many patient explanations granted during this stay. The author was also supported by OPUS grant 2019/35/B/STI/01120.

%%% Local Variables:
%%% mode: latex
%%% TeX-master: "../main"
%%% End:

\section{Prerequisites}
\label{sec:prereqs}
\subsection{The cube category}
\label{section: the cube}
Let \(\two{} = \two{1}\) denote the poset \(\{0 > 1\}\), or the category with objects \(0\) and \(1\) and a single non-identity morphism \(1 \to 0.\) For \(n \in \integer\), \(n > 1\), we let \(\two{n} = \two{1} \times \two{n-1}.\) If \(u \geq v,\) we will denote the single element of \(\hom_{\two{n}}(u, v)\) by \(\phi_{u, v}\). For \(u \in \two{n}\), we denote by \(\abs{u}\) the \(L^1\)-norm of \(u\), so that \(\abs{u} = u_1 + \dotsb + u_n.\) If \(u \geq v\) and \(\abs{v} - \abs{u} = k\), we write \(u \geq_k v.\)
 The category \(\two{n}\) will sometimes be considered as a \(2\)-category with no non-identity \(2\)-morphisms.

A group \(G\) can be understood as a category with one object \(\ast\) and morphisms \(\hom_G(\ast, \ast) = G\) with composition defined by the group law of \(G\).
Then, an action of a group \(G\) on a small category \(\mathcal C\) is a functor \(\gamma \colon G \to \cats\) with \(\gamma(\ast) = \mathcal C\).

Although we state some results in greater generality, in application to Khovanov homology of periodic links the setting is that of a particular action of a cyclic group on a cube category.
Given integers \(n\) and \(m\), the identification \(\two{nm} \cong (\two{n})^m\) establishes a left action of \(\integer_m\) on \(\two{nm}\) by cyclic permutation of the \(\two{n}\)-factors; so that the generator \(1 \in \integer_m\) acts by
\[1 \acts (x_1, \dotsc, x_m) = (x_m, x_1, \dotsc, x_{m-1}), \quad x_1, \dotsc, x_m \in \two{n}.\]
In the setting of a group \(G\) acting on a poset \(\mathcal C\), one can speak of the fixed-point category \(\mathcal C^H\) (for any subgrop \(H \subseteq G\)). The fixed points of the action of \(\integer_m\) on \(\two{nm}\) as above, the fixed-point category \((\two{nm})^{\integer_m}\) is identified with \(\two{n}\) itself. Likewise, if \(H\subseteq \integer_m\) is the single subgroup of index \(k\), we fix an identification \((\two{nm})^H \cong \two{nk}\).

%%% Local Variables:
%%% mode: latex
%%% TeX-master: "../main"
%%% End:

\subsection{\(G\)-cell complexes and the equivariant Spanier-Whitehead category}
The definitions are classical, and in notation we follow \cite[Sections 3.1, 3.2, 3.3]{borodzik2021khovanov}.

An orthogonal representation of a finite group \(G\) is a homomorphism \(\rho \colon G \to O(V)\), with \(V\) a real linear space equipped with an inner product. We will only consider finite-dimensional orthogonal representations and will call them ``representations'' for short. A morphism of representations \((\rho_V \colon G \to O(V)) \to (\rho_W \colon G \to O(W))\) is a linear map \(f \colon V \to W\) such that 
\(\forall g \in G \enspace \rho_W(g) \circ f = f \circ \rho_V(g).\)
Representations of \(G\) make up a monoid under the direct sum operation \(\oplus\). We will often consider virtual representations, which arise by applying the Grothendieck construction to the monoid of representations of \(G\). Namely, let
\[\representationring(G) = \{V - W \suchthat \text{$V$ and $W$ are representations of $G$}\}/{\sim},\]
where \(V_1 - W_1 \sim V_2 - W_2\) whenever \(V_1 \oplus W_2\) and \(V_2 \oplus W_1\) are isomorphic representations of \(G\). Together with multiplication induced by \(\tensor\), \(\representationring(G)\) becomes a ring.

\begin{df}
  Let \(H \subseteq G\) be a subgroup, \(V\) an \(H\)-representation. Denote by \(B_R(V)\) the ball of radius \(R\) in \(V\), which inherits the group action. A \emph{\(G\)-cell of type \((H, V)\)} is a topological space
  \[E(H, V) = G \times_H B_R(V) = G \times B_R(V) /{[(gh, x) \sim (g, hx)]}\]
  (for some \(R > 0\)), considered as a \(G\)-space via \(g' \acts [g, x] = [g'g, x]\).
  A \emph{\(G\)-cell complex} is a topological space with filtation \(X_0 \subseteq X_1 \subseteq \dotsb\) such that:
  \begin{itemize}
  \item \(X_0\) is a disjoint union of orbits \(G/{H}\),
  \item \(X_n = X_{n-1} \cup_f E(H_n, V_n)\), where \(f \colon \partial E(H_n, V_n) \to X_{n-1}\) is \(G\)-equivariant,
  \item \(X = \colim_n X_n\).
  \end{itemize}
\end{df}

Note that, should \(V\) already  be a \(G\)-representation, the cell \(E(H, \restrict{V}{H})\) is equivariantly homeomorphic to \(G/{H} \times B_R(V).\)

A \(G\)-cell complex in which all cells are modeled on trivial representations, meaning that all cells are of the form \(G/{H} \times D^{n+1}\), is called a \emph{\(G\)-CW complex}. Any \(G\)-cell complex is \(G\)-homotopy equivalent to a \(G\)-CW complex (see \cite[Proposition 3.3]{borodzik2021khovanov}).

%%% Local Variables:
%%% mode: latex
%%% TeX-master: "../main"
%%% End:

\section{Homotopy coherent diagrams}
\label{sec:hkoh}
Homotopy coherent diagrams represent a relaxed notion of a diagram of topological spaces, one that is required to commute only up to coherent homotopies. In this section we present one approach of introducing group actions on homotopy coherent diagrams.
\subsection{Homotopy coherent diagrams}
In the following, every indexing category \(\mathcal C\) will have the property that for any two objects \(c, d \in \ob(\mathcal C)\), there are only finitely many chains of morphisms starting at \(c\) and ending at \(d\). In particular, the category \(\mathcal C\) does not have any non-identity isomorphisms. This last condition simplifies the formulas defining homotopy coherent diagrams and their homotopy colimits, allowing us to consider only non-identity morphisms (see \cite[Observation 4.12]{Lawson_2020}).

The formalism for homotopy coherent diagrams that we use has been described by \cite{Vogt1973}, already in the case of an arbitrary small indexing category. A slightly different realization, in the case of strictly commutative diagrams, can be found in \cite{MR0232393}, and a different approach was introduced by \cite{MR0365573}.

\begin{df}
  \label{df: hkoh}
  Let \(\mathcal C\) denote a small category and assume that \(\mathcal C\) does not have non-identity isomorphisms. A \emph{homotopy coherent diagram} \(F \colon \mathcal C \to \tops_*\) consists of assignments: to each \(x \in \mathcal C\), a pointed topological space \(F(x) \in \tops_*\), and to each sequence \(x_0 \xto{f_1} x_1 \xto{f_2} \dotsb \xto{f_n} x_n\) of composable non-identity morphisms in \(\mathcal C\), of a pointed continuous map
  \[F(f_n, \dotsc, f_1) \colon [0, 1]^{n-1} \times F(x_0) \to F(x_n).\]
  These maps are required to satisfy the following conditions:
  \begin{itemize}
  \item \(F(f_n, \dotsc, f_{i+1})(t_{i+1}, \dotsc, t_{n-1}) \circ F(f_i, \dotsc, f_1)(t_1, \dotsc, t_{i-1}) = F(f_n, \dotsc, f_1)(t_1, \dotsc, t_{i-1}, 0, t_{i+1}, \dotsc, t_{n-1}),\)
  \item \(F(f_n, \dotsc, f_{i+1} \circ f_i, \dotsc, f_1)(t_1, \dotsc, t_{i-1}, t_{i}, \dotsc, t_{n-1}) = F(f_n, \dotsc, f_1)(t_1, \dotsc, t_{i-1}, 1, t_{i+1}, \dotsc, t_{n-1}).\)
  \end{itemize}
\end{df}

We will sometimes say that a homotopy coherent diagram \(F \colon \mathcal C \to \tops_*\) is \emph{of shape \(\mathcal C\)}.

Denote by \(\mathcal C_{n+1}(x_0, x_n)\) the set of composable chains of non-identity morphisms in \(\mathcal C\) of length \(n\), starting at \(x_0\) and ending at \(x_n\):
\[\mathcal C_{n+1}(x_0, x_n) \coloneqq \{x_0 \xto{f_1} \dotsb \xto{f_n} x_n \suchthat \forall i \enspace f_i \neq \id_{x_i}\}.\]

\begin{df}
  Given a homotopy coherent diagram \(F \colon \mathcal C \to \tops_*\), the homotopy colimit of \(F\) is the pointed topological space 
  \begin{equation}
    \label{eq: hocolim relations}
    \hocolim F = \{\ast\} \disjoint \coprod_{n \geq 0} 
    \coprod_{x_0, x_n \in \ob(\mathcal C)} \mathcal C_{n+1}(x_0, x_n) \times [0, 1]^n \times F(x_0) /{\sim},
  \end{equation}
  with \(\sim\) defined as follows, for \(f_i \colon x_{i-1} \to x_i\), \(t_i \in [0, 1]\), \(p \in F(x_0)\):
  \begin{itemize}
  \item \((f_n, \dotsc, f_1; t_1, \dotsc, t_n; \ast) \sim \ast,\)
  \item \((f_n, \dotsc, f_{i+1}; t_{i+1}, \dotsc, t_n; F(f_i, \dotsc, f_1)(t_1, \dotsc, t_{i-1}; p)) \sim (f_n, \dotsc, f_1; t_1, \dotsc, t_{i-1}, 0, t_{i+1}, \dotsc, t_n; p),\)
  \item \((f_n, \dotsc, f_{i+1} \circ f_i, \dotsc, f_1; t_1, \dotsc, t_{i-1}, t_{i+1}, \dotsc, t_n; p) \sim (f_n, \dotsc, f_1; t_1, \dotsc, t_{i-1}, 1, \dotsc, t_{i+1}, \dotsc, t_n; p)\), \(i < n\),
  \item \((f_{n-1}, \dotsc, f_1; t_1, \dotsc, t_{n-1}; p) \sim (f_n, \dotsc, f_1; t_1, \dotsc, t_{n-1}, 1; p)\).
  \end{itemize}
\end{df}

\begin{df}
  A \emph{homomorphism} of homotopy coherent diagrams \(F_1, F_0 \colon \mathcal C \to \tops_*\) is a collection of (pointed, continuous) maps \(\phi_x \colon F_1(x) \to F_0(x)\) for each \(x \in \ob(\mathcal C)\) such that
  \[F_0(f_n, \dotsc, f_1)(t_1, \dotsc, t_{n-1}) \circ \phi_x = \phi_y \circ F_1(f_n, \dotsc, f_1)(t_1, \dotsc, t_{n-1})\]
  for all sequences \(x \xto{f_1} \dotsb \xto{f_n} y\) of morphisms in \(\mathcal C\) and all \((t_1, \dotsc, t_{n-1}) \in [0, 1]^{n-1} \).
\end{df}

A homomorphism of homotopy coherent diagrams \(\varphi \colon F_1 \to F_2\) induces a continuous map \(\hocolim \varphi \colon \hocolim F_1 \to \hocolim F_2\) in \Cref{eq: hocolim relations}, via 
\[(f_n, \dotsc, f_1; t_1, \dotsc, t_n; p) \mapsto (f_n, \dotsc, f_1; t_1, \dotsc, t_n; \varphi_{x_0}(p)),\]
where \(p \in F(x_0)\) and \(x_0\) is the target of \(f_n\).

A more relaxed notion of map between homotopy coherent diagrams takes the form of a larger homotopy coherent diagram, as in the definition below.

\begin{df}
  A \emph{natural transformation} of homotopy coherent diagrams \(F_1, F_0 \colon \mathcal C \to \tops_*\) is a homotopy coherent diagram \(\eta \colon \two{} \times \mathcal C \to \tops_*\) with
  \(\restrict{\eta}{\{i\} \times \mathcal C} = F_i\)
  for \(i = 0, 1\).
\end{df}

The data of a natural transformation also contains maps \(\eta_x \colon F_1(x) \to F_0(x)\) for \(x \in \mathcal C\). If they are all weak homotopy equivalences, we call \(\eta\) a \emph{weak equivalence} (of homotopy coherent diagrams). However, the \(\eta_x\) do not immediately commute with the identifications in \Cref{eq: hocolim relations}; rather, the comparison map on homotopy colimits is defined up to homotopy, and the following holds.

\begin{prop}(\cite{Vogt1973})
  Let \(F_0, F_1 \colon \mathcal C \to \tops_*\) be homotopy coherent diagrams and \(\eta \colon F_1 \to F_0\) a natural transformation. Then there is a map \(\hocolim \eta \colon \hocolim F_1 \to \hocolim F_0\), well defined up to homotopy. If the components \(\eta_x \colon F_1(x) \to F_0(x)\) are all weak homotopy equivalences, so is \(\hocolim \eta\).
\end{prop}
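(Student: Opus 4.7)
The plan is to realize the comparison map through the diagram $\eta$ itself, viewed as an hcd on $\two{} \times \mathcal C$. Since the restrictions of $\eta$ to $\{i\} \times \mathcal C$ recover $F_i$ on the nose, and any chain of non-identity morphisms lying entirely in $\{i\} \times \mathcal C$ remains such a chain in $\two{} \times \mathcal C$, we obtain canonical closed inclusions $\iota_i \colon \hocolim F_i \hookrightarrow \hocolim \eta$ for $i = 0, 1$, identifying the former with the evident subspaces of the quotient in \Cref{eq: hocolim relations}.

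The main step is to construct a strong deformation retraction $r \colon \hocolim \eta \to \iota_0(\hocolim F_0)$. A non-degenerate point of $\hocolim \eta$ has a representative $(g_n, \dotsc, g_1; t_1, \dotsc, t_n; p)$ with each $g_j$ a non-identity morphism in $\two{} \times \mathcal C$; since any such chain flips in the $\two{}$-coordinate at most once, exactly one index $i$ (if any) satisfies that $g_i$ projects to the unique morphism $\phi_{1,0}$ of $\two{}$. I would define the retraction by linearly sliding $t_i$ upward toward $1$; when it reaches $1$, the second relation in \Cref{eq: hocolim relations} collapses $g_i$ with its neighbor, reducing chain length until the flip is absorbed and the remaining chain lies in $\{0\} \times \mathcal C$. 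The comparison map is then $\hocolim \eta \letequal r \circ \iota_1 \colon \hocolim F_1 \to \hocolim F_0$. Any two such retractions are homotopic through retractions (the space of retractions is convex in the relevant parameters), so the composite is well-defined up to homotopy.

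For the weak equivalence claim, it suffices to show that $\iota_1$ is itself a weak equivalence, since $r$ is a homotopy equivalence. I would filter $\hocolim \eta$ and $\hocolim F_1$ by chain length $n$, express the successive stages as pushouts along the cofibrations obtained from the simplex pieces $\mathcal C_{n+1}(x_0, x_n) \times [0, 1]^n \times F_\bullet(x_0)$ with their boundary inclusions, and apply the gluing lemma inductively. The base case $n = 0$ is exactly the pointwise hypothesis that each $\eta_x$ is a weak equivalence; the inductive step uses that the horizontal maps in the pushouts factor through spaces built from the $F_i(x_0)$'s by cylinder-type constructions, and the inductive hypothesis combined with the coherence data supplies the needed equivalences at each stage.

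The main technical obstacle is verifying that the deformation retraction $r$ is compatible with all of the identifications in \Cref{eq: hocolim relations} — in particular, that sliding $t_i$ toward $1$ does not conflict with the first and third relations when neighbors of $g_i$ also admit collapses, and that at the moment the flip is absorbed one lands on the correct coset. This is a combinatorial bookkeeping argument most cleanly organized on the CW structure of $\hocolim \eta$ whose cells are products of open simplices with the discrete sets $\mathcal C_{n+1}(x_0, x_n)$, where the collapse becomes cellular after a standard subdivision separating the flip coordinate.
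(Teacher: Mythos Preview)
The paper does not supply a proof of this proposition; it is cited from \cite{Vogt1973} without argument, so there is nothing to compare against. I will comment on your sketch directly.

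Your retraction runs in the wrong direction. Trace your procedure on a chain $(g_n,\dotsc,g_1;t_1,\dotsc,t_n;p)$ with the flip at position $i$. Sliding $t_i\to 1$ (for $i<n$) invokes the third relation and replaces $g_{i+1},g_i$ by $g_{i+1}\circ g_i$, which still flips; iterating shortens the chain until the flip is the last morphism, whereupon setting the final coordinate to $1$ invokes the \emph{fourth} relation, which simply deletes that morphism. You are left with $(g_{i-1},\dotsc,g_1;t_1,\dotsc,t_{i-1};p)$, a chain entirely in $\{1\}\times\mathcal C$ with the original $p\in F_1(x_0)$ untouched --- so you have landed in $\iota_1(\hocolim F_1)$, not $\iota_0(\hocolim F_0)$. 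Meanwhile chains already lying in $\{0\}\times\mathcal C$ have no flip and are fixed by your procedure, so the end result is not a retraction onto either subspace.

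What actually pushes points into $\hocolim F_0$ is the \emph{second} relation, reached by sliding $t_i\to 0$: this replaces $p$ by $\eta(g_i,\dotsc,g_1)(t_1,\dotsc,t_{i-1};p)\in F_0(x_i)$ and leaves only the tail $(g_n,\dotsc,g_{i+1})$, which lies in $\{0\}\times\mathcal C$. Chains lying wholly in $\{1\}\times\mathcal C$ need a separate move: first append the component $\eta_{x_n}\colon(1,x_n)\to(0,x_n)$ via the fourth relation (entering at $t_{n+1}=1$), then slide that new coordinate to $0$. With this correction your outline becomes the standard one, and the compatibility bookkeeping you flag at the end is indeed where the genuine work lies. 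Your filtration argument for the weak-equivalence claim is plausible in shape, though once $r$ is correctly built the cleaner route is to observe that $\iota_0$ is a homotopy equivalence unconditionally and then argue that $\iota_1$ is a weak equivalence under the pointwise hypothesis.
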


%%% Local Variables:
%%% mode: latex
%%% TeX-master: "../main"
%%% End:

\subsection{Homotopy coherent diagrams as topological diagrams}
\label{section: hkoh-topo}
\cite{Vogt1973} provides another description of homotopy coherent diagrams, which is of use to us. The following again supposes that \(\mathcal C\) has no non-identity isomorphisms. We work in the category \(\tops_*\) of compactly generated topological spaces.

A \emph{topological category} is a topologically enriched small category; that is, one in which the morphism sets carry a topology and composition is a continuous map. If \(\mathcal D\) is a topological category, a \emph{topological diagram} \(F \colon \mathcal D \to \tops_*\) consists of a function 
\[F_0 \colon \ob(\mathcal D) \to \ob(\tops_*)\]
 along with continuous maps
\[F_{A, B} \colon \mathcal D(A, B)_+ \smashp F_0(A) \to F_0(B)\]
satisfying
\begin{itemize}
\item \(F_{A, B}(\id_A; -) = \id_{F_0(A)}\),
\item for all pairs of composable morphisms \(A \xto{f} C \xto{g} B\),
      \[F_{A, B}(g \circ f; -) = F_{C, B}(g; F_{A, C}(f; -)).\]
\end{itemize}

Given an ordinary small category \(\mathcal C\), one can treat its morphism sets as discrete spaces, whereby topological diagrams are the same as commutative diagrams \(\mathcal C \to \tops_*\). A more interesting construction is the following.

\begin{df}
  The \emph{free topological category} associated to \(\mathcal C\) is the topological category \(\free \mathcal C\) wih \(\ob(\free \mathcal C) = \ob(\mathcal C)\) and morphism spaces
  \[\free \mathcal C(A, B) = \disjoint_{n \geq 0} \mathcal C_{n+1}(A, B) \times I^n /{\sim}\]
  where 
  \(\sim\) is generated by
  \[(f_n, \dotsc, f_0; t_1, \dotsc, t_{i-1}, 1, t_i, \dotsc, t_n) \sim (f_n, \dotsc, f_i \circ f_{i-1},\dotsc, f_0; t_1, \dotsc, t_{i-1}, t_{i+1}, \dotsc, t_n).\]
  Composition in \(\free \mathcal C\) is defined by the formula
  \[(f_n, \dotsc, f_0; t_1, \dotsc, t_n) \circ (g_m, \dotsc, g_0; u_1, \dotsc, u_m) = (g_n, \dotsc, f_0, g_m, \dotsc, g_0; u_1, \dotsc, u_m, 0, t_1, \dotsc, t_n).\]
\end{df}

The assignment is functorial; given a functor of small categories \(\eta \colon \mathcal C \to \mathcal D\), there is an induced continuous functor \(\free \eta \colon \free \mathcal C \to \free \mathcal D\) given by
\[\free \eta (f_n, \dotsc, f_0; t_1, \dotsc, t_n) = (\eta f_n, \dotsc, \eta f_0, t_1, \dotsc, t_n).\]

The free topological category construction allows us to redefine homotopy coherent diagrams as follows.

\begin{df}
  A homotopy coherent diagram \(F \colon \mathcal C \to \tops_*\) is defined as a topological diagram \(F \colon \free \mathcal C \to \tops_*\).
\end{df}

Given a category \(\mathcal C\), let \(\cone{\mathcal C}\) denote the 'cone category over \(\mathcal C\)', that is the category with objects \(\ob(\cone {\mathcal C}) = \ob(\mathcal C) \cup \{\ast\}\), \(\ast\) being made a terminal object of \(\mathcal C\).
Define the topological functor \(\phi_{\bullet} \colon \free \mathcal C \to \tops_*\)  by
\[\phi_\bullet(c) = \free \cone{\mathcal C}(c, \ast).\]
Then the formula for the homotopy colimit in \Cref{eq: hocolim relations} can be restated as follows:
\begin{equation}
\label{eq: steimle hocolim}
\hocolim F = \coeq \left( \coprod_{u, v \in \mathcal C} \phi_\bullet(d) \smashp \free \mathcal C(c, d) \smashp F(c) \rightrightarrows \coprod_{u \in \ob(\mathcal C)} \phi_\bullet(c) \smashp F(c)\right).
\end{equation}

This description has been pointed out by \cite{steimle2011homotopy}, who also defines homotopy colimits using a universal property. Namely, suppose we are given a homotopy coherent diagram \(F \colon \mathcal C \to \tops_*\) and its extension to a homotopy coherent diagram \(G \colon \cone{\mathcal C} \to \tops_*\). Then, any pointed continuous map \(\varphi \colon G(\ast) \to Z\) induces another extension of \(F\) to a homotopy coherent diagram \(G' \colon \cone{\mathcal C} \to \tops_*\) with \(G'(\ast) = Z\). Consider then the category with objects extensions of \(F\) to a coherent diagram \(G \colon \cone{\mathcal C} \to \tops_*\), and morphisms \(G \to G'\) the maps \(\varphi \colon G(\ast) \to G'(\ast)\) such that \(G'\) is induced from \(G\) by \(\varphi\). The \emph{homotopy colimit} of \(F\) can be defined as an initial object in this category; indeed, \cite[Proposition 3.2]{steimle2011homotopy} verifies that \Cref{eq: steimle hocolim} satisfies this universal property.

A natural transformation of homotopy coherent diagrams \(\eta \colon F_1 \transform F_0\) of shape \(\mathcal C\) is then a topological functor \(\eta \colon \free(\two{} \times \mathcal C) \to \tops_*\) with \(F_i = \eta \circ \free(\iota_i)\), where \(\iota_i \colon \mathcal C \to  \two{} \times \mathcal C\) denotes the inclusions for \(i = 1, 0\). Respectively, a homomorphism \(F_1 \to F_2\) is a topological functor \(\theta \colon \two{} \times \free{\mathcal C} \to \tops_*\).

%%% Local Variables:
%%% mode: latex
%%% TeX-master: "../main"
%%% End:

\subsection{External actions on homotopy coherent diagrams}
\label{sec: free topological category}
Equivariant diagrams of spaces have been studied i.e. in \cite{sjack}, \cite{VillarroelFlores2004TheAB}, who consider the notion of an action of a group \(G\) on a diagram \(X \colon I \to \mathcal C\) in the case that \(I\) carries an action by \(G\); \cite{Dotto-Moi} and \cite{Dotto_2016} have further considered properties of homotopy colimits of such functors. We instead require a notion of a ``\(G\)-homotopy coherent diagram'', and one such has been proposed in \cite{stoffregen2018localization}.

Recall that we consider a group \(G\) as a category with one object and morphisms the elements of \(G\), and an action of \(G\) on a category \(\mathcal C\) is a functor 
\(\gamma \colon G \to \cats\)
with \(\gamma(\ast) = \mathcal C\).
Likewise, if \(\mathcal C\) is a topological category, an action of \(G\) on \(\mathcal C\) is a functor
\(\gamma \colon G \to \cats_{\tops}\)
that picks out \(\mathcal C\).

Observe that in the presence of a group action of a group \(G\) on a small category \(\mathcal C\), the free topological category \(\free \mathcal C\) carries an induced action of \(G\) which on objects agrees with the action on \(\mathcal C\) and on morphism spaces is defined by
\[g \acts (f_n, \dotsc, f_0; t_1, \dotsc, t_n) = (g \acts f_n, \dotsc, g \acts f_0; t_1, \dotsc, t_n).\]

The following definition extends \cite[Definition 2.2]{sjack}.

\begin{df}
  \label{df: external action}
  Let \(\mathcal C\) be a category with an action \(\gamma \colon G \to \cats\) and let \(F \colon \mathcal C \to \tops_*\) be a homotopy coherent diagram.

  An \emph{external action} of \(G\) on \(F\) compatible with \(\gamma\) consists of a family of homomorphisms of homotopy coherent diagrams
  \[\{\psi_g \colon F \transform F \circ \gamma_g \suchthat g \in G\}\]
   satisfying:
  \begin{enumerate}
  \item \(\psi_e = \id_F\),
  \item for any \(g, h \in G\), \(\psi_{hg} = (\psi_h, \id_g) \circ \psi_g.\)
  \end{enumerate}
\end{df}

An equivalent definition is \cite[Definition 5.1]{stoffregen2018localization}, which calls for a homomorphism \(\psi \colon G \to \homeo(\vee_{c \in \ob(\mathcal C)}F(c))\) compatible with \(\gamma\) and such that
\[g \acts F(f_n, \dotsc, f_0; t_1, \dotsc, t_n; p) = F(g \acts f_n, \dotsc, g \acts f_0; t_1, \dotsc, t_n; g \acts p).\]

In the presence of an external action, the homotopy colimit realized as the coequalizer of \Cref{eq: steimle hocolim} carries an action of \(G\) by
\(g \acts (t, x) = (gt, gx)\),
as the two defining maps are equivariant. In terms of \Cref{eq: hocolim relations}, the action reads
\begin{equation}
  \label{eq:hocolim-action}
  g \acts (f_n, \dotsc, f_1; t_1, \dotsc, t_n; p) = (g \acts f_n, \dotsc, g \acts f_1; t_1, \dotsc, t_n; \psi_g(p)).
\end{equation}

\begin{df}(see \cite[Definition 5.5]{stoffregen2018localization})
  Suppose that a small category \(\mathcal C\) carries an action of group \(G\). Given two homotopy coherent diagrams \(F_1, F_0 \colon \mathcal C \to \tops_*\) with external actions \(\varphi^1, \varphi^0\) respectively, we call \(F_1\) and \(F_0\) \emph{externally weakly equivalent} if there exists another homotopy coherent diagram \(\eta \colon \two{} \times \mathcal C \to \tops_*\) with external action of \(G\) by \(\tilde{\varphi}\) such that
\((F_i, \varphi^i) = (\restrict{\eta}{\{i\} \times \mathcal C}, \restrict{\tilde{\varphi}}{F_i})\)
for \(i = 1, 0\)
and that the maps \(\eta(1 \to 0, \id_x) \colon F_1(x) \to F_0(x)\) are weak homotopy equivalences.
\end{df}

%%% Local Variables:
%%% mode: latex
%%% TeX-master: "../main"
%%% End:

\subsection{Fixed point diagrams}
Suppose that \(G\) acts on a small category \(\mathcal C\) and  \(H \subseteq G\) is a subgroup. The fixed-point category \(\mathcal C^H\) has objects and morphisms those fixed by \(H\), and the fixed-point category of the topological category \(\free \mathcal C\) is defined analogously. The two topological categories \(\free(\mathcal C^H)\) and \((\free \mathcal C)^H\) are then isomorphic, and given a homotopy coherent diagram \(F \colon \mathcal C \to \tops_*\) with external action of \(G\), the \emph{fixed-point diagram} \(F^H\) is the homotopy coherent diagram \(F^H \colon \mathcal C^H \to \tops_*\) with
\(F^H(c) = F(c)^H\)
and 
\[F^H(f_n, \dotsc, f_0; t_1, \dotsc, t_n) = \restrict{F(f_n, \dotsc, f_0;t_1, \dotsc, t_n)}{F(c)^H}.\]

Moreover, the fixed point set \((\hocolim F)^H\) can be identified as the homotopy colimit of the diagram \(F^H\).

\begin{prop}(\cite[Lemma 5.6]{stoffregen2018localization})
  For any subgroup \(H \subseteq G\) and any h.c. diagram \(F \colon \mathcal C \to \tops_*\) with external action of \(G\) there is a homeomorphism
  \[\hocolim(F^H) \simeq (\hocolim F)^H.\]
\end{prop}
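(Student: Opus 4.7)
The strategy is to exploit the concrete coequalizer description of the homotopy colimit in \Cref{eq: hocolim relations} and compare the two quotients stratum by stratum. The inclusions \((\mathcal C^H)_{n+1}(x_0, x_n) \hookrightarrow \mathcal C_{n+1}(x_0, x_n)\) and \(F(x_0)^H \hookrightarrow F(x_0)\) pass to the quotient to yield a continuous pointed map
\[\Phi \colon \hocolim(F^H) \longrightarrow (\hocolim F)^H,\]
whose image lies in the fixed set because \(\psi_h\) restricts to the identity on each \(F(x_0)^H\) and every morphism in \(\mathcal C^H\) is \(H\)-fixed by definition. It remains to show that \(\Phi\) is a homeomorphism.

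For bijectivity, I would invoke the standard fact that every non-basepoint of \(\hocolim F\) has a unique \emph{reduced} representative, namely one of the form \((f_n, \dotsc, f_1; t_1, \dotsc, t_n; p)\) with every \(f_i\) a non-identity morphism, every \(t_i \in (0,1)\), and \(p \neq \ast\); this is immediate from the fact that the four relations defining \(\sim\) are only generated at boundary coordinates \(t_i \in \{0,1\}\), at composites, or at the basepoint, so the open strata indexed by non-degenerate chains with all \(t_i \in (0,1)\) embed disjointly. The formula \eqref{eq:hocolim-action} for the \(G\)-action preserves reduced form, sending a reduced representative to \((hf_n, \dotsc, hf_1; t_1, \dotsc, t_n; \psi_h(p))\). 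By uniqueness, a reduced point is \(H\)-fixed if and only if \(hf_i = f_i\) for every \(i\) and every \(h\in H\) and \(\psi_h(p) = p\) for every \(h\), that is, if and only if the chain lies in \(\mathcal C^H\) and \(p \in F(x_0)^H\). This is precisely the image of \(\Phi\).

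For the inverse continuity, observe that both \(\hocolim(F^H)\) and \((\hocolim F)^H\) are built as quotients. The source of the quotient defining \(\hocolim(F^H)\) is
\[\coprod_{n, x_0, x_n} (\mathcal C^H)_{n+1}(x_0, x_n) \times [0,1]^n \times F(x_0)^H,\]
and it maps to \((\hocolim F)^H\) via the composition of inclusion and projection. Since \(H\) acts on the source of the total quotient by permuting components in a way that exhibits the fixed components exactly as those indexed by \(\mathcal C^H\)-chains, and fixes points in each \(F(x_0)^H\) factor, the \(H\)-fixed subspace of the source is exactly the source of the \(\hocolim(F^H)\) quotient. Because \(H\) is finite, taking \(H\)-fixed points commutes with the quotient (the quotient map is closed and \(H\)-equivariant), so the quotient topology on \(\hocolim(F^H)\) matches the subspace topology of \((\hocolim F)^H\) and \(\Phi\) is a homeomorphism.

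The main obstacle I expect is justifying uniqueness of reduced representatives cleanly; it amounts to checking that the equivalence relation \(\sim\) is generated by moves that only affect boundary behaviour of the cube coordinates or insert/remove identity morphisms, which lets one normalize any representative by first using the \(t_i = 1\) relation to collapse identities and compositions, then using the \(t_i = 0\) relation to push any values absorbed into \(F(x_0)\). Once this normalisation is in hand, both the bijectivity and the identification of quotient topologies are formal, and the \(H\)-equivariance of the normalization procedure is automatic because every defining relation is \(G\)-equivariant under the action \eqref{eq:hocolim-action}.
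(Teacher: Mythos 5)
The paper offers no proof of this proposition; it is quoted verbatim from \cite[Lemma 5.6]{stoffregen2018localization}, so there is nothing internal to compare against. Assessing your argument on its own terms: your construction of the comparison map \(\Phi\), and the use of reduced representatives to produce a bijection, are sound. The uniqueness of reduced representatives follows most cleanly by noting that \(\hocolim F\) is built as a sequential colimit along closed inclusions of stages indexed by chain length, each stage obtained by attaching pieces of the form \(\mathcal C_{n+1}(x_0, x_n) \times [0,1]^n \times F(x_0)\) along their boundary; the interior of an attached piece always embeds in the resulting pushout, which gives the claim stagewise. Since the action \eqref{eq:hocolim-action} preserves the cube coordinates and sends reduced representatives to reduced representatives, a fixed class downstairs has an \(H\)-fixed reduced representative upstairs, which gives the set-theoretic identification.

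The gap is in the topological step. The principle you invoke --- that ``\(H\) finite plus closed \(H\)-equivariant quotient map implies fixed points commute with the quotient'' --- is false. Take \(H = \mathbb{Z}/2\) acting on the discrete two-point space \(\{a,b\}\) by the swap and quotient everything to a single point: the quotient map is closed and equivariant, but \((\{a,b\})^H = \emptyset\) does not surject onto \((\{*\})^H = \{*\}\). Fixed points are a limit construction and do not commute with colimits for formal reasons. What makes the identity true in this instance is exactly your reduced-representatives observation (every \(H\)-fixed class has an \(H\)-fixed preimage), but that is a set-level fact and does not by itself control the quotient topology; in this phrasing the topological content of the proposition is being assumed rather than proved. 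To close the gap, replace the blanket appeal to commutation of fixed points with quotients by an induction over the skeletal filtration by chain length: \((-)^H\) commutes with coproducts (with \(H\) acting on the index set of chains, isolating the \(\mathcal C^H\)-chains), with pushouts along closed \(H\)-cofibrations (the boundary inclusion of \([0,1]^n \times F(x_0)\) is one, the \(H\)-action being trivial on the cube factor; this is standard for compact groups in compactly generated spaces), and with sequential colimits along closed \(H\)-equivariant inclusions. Stage by stage this yields both the set bijection and the homeomorphism, without needing the quotient map to be closed at all.
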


%%% Local Variables:
%%% mode: latex
%%% TeX-master: "../main"
%%% End:

\section{Flow categories}
\label{sec:flowcats}
Flow categories have been introduced by Cohen, Jones and Segal as a way to define stable homotopy types associated to Floer homology (see \cite{cjs-floers}, \cite{cjs-morse}, \cite{cohen2019floer}). They were used by Lipshitz and Sarkar \cite{ls2014} to construct a spatial refinement of Khovanov homology. In this section we describe equivariant cubical flow categories after \cite{borodzik2021khovanov} while supplying an alternative description of the (equivariant) cube flow category using the 'free topological category' construction of the previous section.
\subsection{\(\cornered{n}\)-manifolds and flow categories}
\label{section:cornered}
We reproduce relevant definitions after \cite[Section 3.1]{Lawson_2020}.

\begin{df}(see \cite[Definition 3.1]{Lawson_2020})
  A \emph{\(k\)-dimensional manifold with corners} is a topological space \(X\) equipped with an atlas
  \[\{U_\alpha, \enspace \phi_{\alpha} \colon U_\alpha \to (\real_+)^k\}\] modeled on open subsets of \((\real_+)^k\), with smooth transition functions. For a point \(x\) in a chart \((U, \phi)\), let \(c(x)\) be the number of coordinates of \(\phi(x)\) which are \(0\); \(c(x)\) is independent of the choice of chart. The \emph{codimension-\(i\)} boundary of \(X\) is \(\{x \in X \suchthat c(x) = i\}.\)
  By a \emph{Riemannian metric} on a \(k\)-dimensional manifold with corners \(X\) we mean a Riemannian metric on \(TX.\)
\end{df}

\begin{df}
  A \emph{facet} of \(X\) is the closure of a connected component of the codimension-\(1\) boundary of \(X\). A \emph{multifacet} is a union of disjoint facets of \(X\). A manifold with corners \(X\) is a \emph{multifaceted manifold} if every \(x \in X\) belongs to exactly \(c(x)\) facets of \(X.\) An \emph{\(\cornered{n}\)-manifold} is a multifaceted manifold along with an ordered \(n\)-tuple \((\partial_1X, \dotsc, \partial_nX)\) of multifacets of \(X\) such that:
  \begin{itemize}
  \item \(\setsum_i \partial_iX = \partial X,\)
  \item \(\forall i \neq j \enspace \partial_i X \cap \partial_j X\) is a multifacet of both \(\partial_i X\) and \(\partial_j X.\)
  \end{itemize}
\end{df}
For an \(\cornered{n}\)-manifold \(X\) and an \(\cornered{m}\)-manifold \(Y\), the product space \(X \times Y\) becomes an \(\cornered{n+m}\)-manifold by letting
\[\partial_i(X \times Y) =
  \begin{cases}
    (\partial_i X) \times Y, & 1 \leq i \leq n, \\
    X \times (\partial_{i-n} Y), & n+1 \leq i \leq n+m.
  \end{cases}
\]
If \(X\) is an \(\cornered{n}\)-manifold and \(v \in \{0, 1\}^n\), we write
\[X(v) = \setintersection_{i \colon v_i = 0} \partial_i X, \quad X(\vec{1}) = X.\]
\begin{df}
  
 Let \(X\) and \(Y\) be \(\cornered{n}\)-manifolds; fix a Riemannian metric on \(Y\). A \emph{neat embedding} of \(X\) into \(Y\) is a smooth map \(f \colon X \to Y\) satisfying:
  \begin{itemize}
   \item \(\forall v \in \{0, 1\}^n \enspace f^{-1}(Y(v)) = X(v)\),
   \item \(\forall v \in \{0, 1\}^n \enspace \restrict{f}{X(v)} \colon X(v) \to Y(v)\) is an embedding,
   \item for any pair \(w < v \in \{0, 1\}^n,\) \(f(X(v))\) is perpendicular to \(Y(w)\) with respect to the Riemannian metric on \(Y\).
  \end{itemize}
\end{df}

\begin{df}
  \label{df:fc}
  A \emph{flow category} is a topological category \(\mathcal C\) whose objects \(\ob(\mathcal C)\) form a discrete space, equipped with a grading function \(\gr \colon \ob(\mathcal C) \to \integer,\) and whose morphism spaces satisfy the following conditions:
  \begin{enumerate}[(FC-1)]
  \item for any \(x \in \ob(\mathcal C),\) \(\homs(x, x) = \{\id\},\)
  \item for any \(x, y \in \ob(\mathcal C)\) with \(\gr(x) - \gr(y) = k,\) \(\homs(x, y)\) is a (possibly empty) compact \((k-1)\)-dimensional \(\corner{k-1}\)-manifold,
  \item \label{item:fc3} the composition maps combine to produce diffeomorphisms of \(\corner{k-2}\)-manifolds:
    \[\coprod_{\gr(x) \geq \gr(z) = \gr(y)+i} \homs(z, y) \times \homs(x, z) \equiv \partial_i \homs(x, y).\]
\end{enumerate}
\end{df}
For \(x, y \in \mathcal C,\) the \emph{moduli space} from \(x\) to \(y\) is defined by
\[\mathcal M_{\mathcal C}(x, y) =
  \begin{cases}
    \homs(x, y), & x \neq y, \\
    \emptyset, & x = y.
  \end{cases}\]

Following \cite[Chapter 3]{borodzik2021khovanov}, we now define \emph{equivariant flow categories}.

\begin{df}(\cite[Definition 3.5]{borodzik2021khovanov})
  \label{bps def}
  For \(G\) a finite group, a \(G\)-equivariant flow category is a flow category \((\mathcal C, \gr)\) equipped with the following data:
  \begin{itemize}
  \item for \(g \in G\), a grading-preserving functor \(\mathcal G_g \colon \mathcal C \to \mathcal C\),
  \item a function \(\gr_G \colon \ob(\mathcal C) \to \disjointunion_{H \subseteq G} RO(H),\)
  \end{itemize}
  required to satisfy the following compatibility conditions:
  \begin{enumerate}[(EFC-1)]
  \item \label{bps identity} \(\mathcal G_e\) is the identity functor,
  \item \label{bps group condition} \(\mathcal G_{g} \circ \mathcal G_h = \mathcal G_{g h}\) for all \(g, h \in G\),
  \item \label{bps compatibility condition} \((\mathcal G_g)_{x, y} \colon \modulispace_{\mathcal C}(x, y) \to \modulispace_{\mathcal C}(\mathcal G_g(x), \mathcal G_g(y))\) is a diffeomorphism of \(\langle \gr(x) - \gr(y) - 1\rangle\)-manifolds such that 
    \[\restrict{(G_g)_{x, y}}{\modulispace_{\mathcal C}(z, y) \times \modulispace_{\mathcal C}(x, z)} = (G_g)_{z, y} \times (G_g)_{x, z}\]
    whenever \(z \in \ob(\mathcal C)\), \(\gr(y) < \gr(z) < \gr(x)\),
  \item \(\gr_G(x) \in RO(G_x),\) where \(G_x = \{g \in G \suchthat \mathcal G_g(x) = x\},\)
  \item \(\dim_\real \gr_G(x) = \gr(x),\)
  \item for \(g \in G,\) let \(v_g \colon RO(G_x) \to RO(G_{g \acts x})\) be induced by a map
    \[G_x \ni h \mapsto ghg^{-1} \in g G_x g^{-1} = G_{g \acts x}.\]
    Then we require that for \(g \in G,\) \(x_1, x_2 \in \ob(\mathcal C)\) such that \(\mathcal G_g(x_1) = x_2,\) there be
    \(\gr_G(x_2) = v_g(\gr_G(x_1))\)
    and in particular \(v_g \circ v_h = v_{gh},\)
  \item \label{bps isotropy} for \(x, y \in \ob(\mathcal C)\) define
    \[G_{x, y} = \{g \in G \colon \mathcal G_g(\modulispace_{\mathcal C}(x, y)) \subseteq \modulispace_{\mathcal C}(x, y)\};\]
    then the moduli space
    \(\modulispace_{\mathcal C}(x, y)\)
    is required to be a compact \(G_{x, y}\)-manifold of dimension
    \[\restrict{\gr_G(x)}{G_{x, y}} - \restrict{\gr_G(y)}{G_{x, y}} - \real.\]
  \end{enumerate}
\end{df}

\begin{df}(\cite[Definition 3.6]{borodzik2021khovanov})
  A functor \(f \colon \mathcal C_1 \to \mathcal C_2\) is a \emph{\(G\)-equivariant functor} if:
  \begin{itemize}
  \item \(f\) commutes with group actions on \(\mathcal C_1\) and \(\mathcal C_2,\)
  \item for any object \(x \in \ob(\mathcal C_1)\) there is a \(G_x\)-equivariant map
    \[f_{\gr_G(x)} \colon \gr_G(x) \to \gr_G(f(x))\]
    such that for any \(g \in G\) we have
    \[v_g \circ f_{\gr_G(x)} = f_{\gr_G(\mathcal G_g(x))} \circ v_g.\]
  \end{itemize}
\end{df}

\begin{df}(cf. discussion above \cite[Definition 3.6]{borodzik2021khovanov})
  \label{df: fc-suspension}
  Let \(\mathcal C\) be a \(G\)-equivariant flow category and \(V \in RO(G)\) a virtual representation. The \emph{suspension} of \(\mathcal C\) by \(V\) is the \(G\)-equivariant flow category \(\susp^{V} \mathcal C\) whose objects and morphisms sets, as well as the functors \(\mathcal G_g\) are identical to those of \(\mathcal C\), equipped with the grading function
  \[\gr_G^{\susp^V \mathcal C}(x) = \gr_G^{\mathcal C}(x) + \restrict{V}{G_x} \in RO(G_x).\]
\end{df}

\begin{df}(\cite[Definition 3.8]{borodzik2021khovanov})
  A \(G\)-equivariant functor \(f \colon \mathcal C_1 \to \mathcal C_2\) is called a (trivial) \(G\)-cover if for any \(x, y \in \ob(\mathcal C_1)\) the map
  \[f_{x, y} \colon \modulispace_{\mathcal C_1}(x, y) \to \modulispace_{\mathcal C_2}(f(x), f(y))\]
  is topologically a (trivial) cover and for any object \(x \in \ob(\mathcal C_1),\)
  \(f_{\gr_G(x)}\)
  is an isomorphism of \(G_x\)-representations.
\end{df}

\begin{prop}(\cite[Lemma 3.9]{borodzik2021khovanov})
  \label{bps simplification lemma}
  If \(\mathcal C_2\) is a \(G\)-equivariant flow category, \(\mathcal C_1\) is a flow category, \(f \colon \mathcal C_1 \to \mathcal C_2\) is a trivial cover, and there is an action of \(G\) on \(\mathcal C_1\) satisfying conditions \ref{bps identity}, \ref{bps group condition}, \ref{bps compatibility condition}, such that \(f\) commutes with the action, then there is a unique structure of a \(G\)-equivariant flow category on \(\mathcal C_1\) such that \(f\) is a trivial \(G\)-cover.
\end{prop}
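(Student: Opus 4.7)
The proof is to transfer the equivariant grading function across the cover $f$ and then verify that the resulting structure satisfies the remaining axioms (EFC-4)--(EFC-7). Since $\mathcal C_1$ already carries an action of $G$ satisfying (EFC-1)--(EFC-3), the only data left to specify is the function $\gr_G^{\mathcal C_1} \colon \ob(\mathcal C_1) \to \disjointunion_{H \subseteq G} RO(H)$. The requirement that $f$ be a trivial $G$-cover forces the identification $f_{\gr_G(x)} \colon \gr_G^{\mathcal C_1}(x) \to \gr_G^{\mathcal C_2}(f(x))$ to be an isomorphism of $G_x$-representations, so there is essentially no choice in the definition.

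First I would observe that because $f$ commutes with the $G$-actions, for every $x \in \ob(\mathcal C_1)$ one has $G_x \subseteq G_{f(x)}$. This allows one to define
\[\gr_G^{\mathcal C_1}(x) \coloneqq \restrict{\gr_G^{\mathcal C_2}(f(x))}{G_x} \in RO(G_x),\]
which immediately gives (EFC-4). Condition (EFC-5) follows since $f$ is a functor of flow categories, hence $\gr^{\mathcal C_1}(x) = \gr^{\mathcal C_2}(f(x)) = \dim_\real \gr_G^{\mathcal C_2}(f(x)) = \dim_\real \gr_G^{\mathcal C_1}(x)$. For (EFC-6), I use that $G_{g \acts x} = g G_x g^{-1}$ and that the corresponding map $v_g$ on $\mathcal C_1$ agrees with the restriction of the map $v_g$ for $\mathcal C_2$, combined with $f \circ \mathcal G_g = \mathcal G_g \circ f$ and the validity of (EFC-6) in $\mathcal C_2$.

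Verifying (EFC-7) is the step requiring the most care, and I expect it to be the main obstacle. One needs to relate the isotropy subgroup $G_{x, y}^{\mathcal C_1} = \{g : \mathcal G_g(\modulispace_{\mathcal C_1}(x, y)) \subseteq \modulispace_{\mathcal C_1}(x, y)\}$ to its counterpart $G_{f(x), f(y)}^{\mathcal C_2}$. Equivariance of $f$ together with the cover property ensures that $g \in G_{x, y}^{\mathcal C_1}$ implies $g \in G_{f(x), f(y)}^{\mathcal C_2}$, and since $f_{x, y}$ is a \emph{trivial} cover, the moduli spaces $\modulispace_{\mathcal C_1}(x, y)$ are $G_{x, y}^{\mathcal C_1}$-equivariantly diffeomorphic to disjoint copies of $\modulispace_{\mathcal C_2}(f(x), f(y))$ (or connected components thereof). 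In particular they share the same dimension, and this dimension equals $\dim \restrict{\gr_G^{\mathcal C_2}(f(x))}{G_{x, y}^{\mathcal C_1}} - \dim \restrict{\gr_G^{\mathcal C_2}(f(y))}{G_{x, y}^{\mathcal C_1}} - 1 = \dim \restrict{\gr_G^{\mathcal C_1}(x)}{G_{x, y}^{\mathcal C_1}} - \dim \restrict{\gr_G^{\mathcal C_1}(y)}{G_{x, y}^{\mathcal C_1}} - 1$, as required.

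For uniqueness, suppose $\gr_G^{\prime}$ is another choice making $f$ into a trivial $G$-cover. Then by definition of trivial $G$-cover, $f_{\gr_G^{\prime}(x)} \colon \gr_G^{\prime}(x) \to \gr_G^{\mathcal C_2}(f(x))$ is an isomorphism of $G_x$-representations, so $\gr_G^{\prime}(x) = \restrict{\gr_G^{\mathcal C_2}(f(x))}{G_x} = \gr_G^{\mathcal C_1}(x)$ in $RO(G_x)$, proving that our construction is forced.
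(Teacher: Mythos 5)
Your proposal is correct and takes what is surely the intended route (the paper itself delegates the proof to \cite[Lemma~3.9]{borodzik2021khovanov}): pull the grading function back along the cover by setting $\gr_G^{\mathcal C_1}(x)=\restrict{\gr_G^{\mathcal C_2}(f(x))}{G_x}$, observe that the trivial-$G$-cover requirement forces this, and check the remaining axioms. The identification of $\gr_G$ as the only missing datum, the use of $G_x\subseteq G_{f(x)}$, and the uniqueness argument are all sound.

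One small imprecision worth fixing in the verification of \ref{bps isotropy}: the required ``dimension'' is a virtual $G_{x,y}$-representation, namely $\restrict{\gr_G(x)}{G_{x, y}} - \restrict{\gr_G(y)}{G_{x, y}} - \real$, not merely a real number, so writing $\dim(\cdot)-1$ obscures what has to be checked. What you actually need is that the $G_{x,y}^{\mathcal C_1}$-equivariant local models (tangent representations at fixed points) of $\modulispace_{\mathcal C_1}(x,y)$ agree with the prescribed virtual representation. This follows immediately from the argument you already give: $f_{x,y}$ is a $G_{x,y}^{\mathcal C_1}$-equivariant local diffeomorphism onto $\modulispace_{\mathcal C_2}(f(x),f(y))$ (because $f$ commutes with the actions and is a trivial topological covering), hence it identifies tangent $G_{x,y}^{\mathcal C_1}$-representations; restricting the validity of \ref{bps isotropy} in $\mathcal C_2$ along $G_{x,y}^{\mathcal C_1}\subseteq G_{f(x),f(y)}^{\mathcal C_2}$ then gives exactly $\restrict{\gr_G^{\mathcal C_1}(x)}{G_{x,y}^{\mathcal C_1}}-\restrict{\gr_G^{\mathcal C_1}(y)}{G_{x,y}^{\mathcal C_1}}-\real$. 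Stated this way the step is airtight; as written it reads as a numerical dimension count, which is weaker than the axiom demands.
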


%%% Local Variables:
%%% mode: latex
%%% TeX-master: "../main"
%%% End:

\subsection{Permutohedra and group actions}
\label{sec: geometric permutohedra}

Let \(n \geq 1\) be a natural number. The symmetric group \(\Sigma_n\) acts on \(\real^n\) by
\[\sigma \acts (x_1, \dotsc, x_n) = (x_{\sigma^{-1}(1)}, \dotsc, x_{\sigma^{-1}(n)}).\]

Suppose \(S =(s_1, \dotsc, s_n) \in \real^n\) is an increasing sequence of real numbers. The \emph{\(S\)-permutohedron} \(\permutohedron_S\) is the convex hull of \(\Sigma_n\)-translations of \(S\);
if \([n] = (1, \dotsc, n)\), we write \(\permutohedron_{n-1} = \permutohedron_{[n]}\).

For \(i = 1, \dotsc, n,\) let \(\tau_i = \sum_{j=1}^i s_j.\) If \(P \subseteq S\) is a non-empty subset, consider the half-space
\[H_P = \left\{(x_1, \dotsc, x_n) \in \real^n \suchthat \sum_{i \in P} x_i \geq \tau_{\card{P}}\right\}.\]
Then the permutohedron \(\permutohedron_S\) can equivalently be defined as
\[\permutohedron_S = (\setintersection_{\emptyset \neq P \subsetneq S} H_P) \cap \partial H_S\]
(see e.g. \cite[Theorem 1.5.7]{torict}).
In fact, for any ordered partition \(P_1 \cup \dotsb \cup P_r\) of \(S\), the subset
\[\permutohedron_{P_1, \dotsc, P_r} = \permutohedron_S \cap \setintersection_{i = 1, \dotsc, r-1} \partial H_{P_1 \cup \dotsb \cup P_i}\]
is an \((n-r+1)\)-dimensional face of \(\permutohedron_S.\)
In particular, each \(\permutohedron_{P, S \setminus P}\) is a facet in the sense of \cref{section:cornered}. Moreover, \(\permutohedron_{P_1, \dotsc, P_r}\) is a facet of each of the facets \(\permutohedron_{P_1, \dotsc, P_{i-1} \cup P_i, P_{i+1}, \dotsc, P_r}\), \(i=1, \dotsc, r-1.\) It is the intersection of any two of them; indeed all intersections of the sets \(\permutohedron_{P_1, \dotsc, P_r}\) arise in this way, and the following holds.
\begin{prop}(\cite[Lemma 3.14]{Lawson_2020})
  \label{permutohedron n-mfld}
  The polyhedron \(\permutohedron_S\) becomes an \((n-1)\)-dimensional \(\cornered{n}\)-manifold by letting
  \[\partial_i \permutohedron_S = \coprod_{\card{P} = i} \permutohedron_{P, S \setminus P}.\]
\end{prop}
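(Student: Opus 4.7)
The plan is to verify the three conditions defining an \(\cornered{n}\)-manifold. First, I would establish that \(\permutohedron_S\) is a simple convex polytope of dimension \(n-1\): its vertices are precisely the \(\Sigma_n\)-translates of \(S\), and at each vertex, strict monotonicity of the \(s_i\) forces the defining equation \(\sum_{i\in P}v_i=\tau_{\card{P}}\) of a facet \(\permutohedron_{P,S\setminus P}\) to pin down \(P\) uniquely from its cardinality — giving exactly \(n-1\) facets through the vertex. Simplicity then yields, at each \(x\in\permutohedron_S\), a local model \(\real^{n-1-c(x)}\times\real_+^{c(x)}\) where \(c(x)\) is the number of facets through \(x\), exhibiting \(\permutohedron_S\) as a multifaceted manifold with corners.

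Next, I would establish the key combinatorial lemma: for fixed \(\card{P}=i\), the facets \(\permutohedron_{P,S\setminus P}\) are pairwise disjoint, so that the coproduct \(\partial_i \permutohedron_S\) really is a multifacet. Given \(\card{P}=\card{Q}=i\) with \(P\ne Q\), put \(r=\card{P\cap Q}<i\). If \(x\) lies in both facets, then \(\sum_P x_k=\sum_Q x_k=\tau_i\) together with \(\sum_{P\cap Q}x_k\ge\tau_r\) yields \(\sum_{P\cup Q}x_k\le 2\tau_i-\tau_r\); combined with the defining inequality \(\sum_{P\cup Q}x_k\ge\tau_{2i-r}\) this forces \(\tau_r+\tau_{2i-r}\le 2\tau_i\). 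But
\begin{equation*}
  2\tau_i-(\tau_r+\tau_{2i-r})=\sum_{k=1}^{i-r}\bigl(s_{r+k}-s_{i+k}\bigr)<0
\end{equation*}
since the \(s_i\) are strictly increasing and \(i>r\), a contradiction. The union \(\bigcup_i \partial_i\permutohedron_S\) equals \(\partial\permutohedron_S\) because every facet is of the form \(\permutohedron_{P,S\setminus P}\) for some \(1\le\card{P}\le n-1\); in particular \(\partial_n\permutohedron_S=\emptyset\).

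Finally, for the pairwise intersection condition, an entirely analogous calculation with \(\card{Q}=j>i\) shows that \(\permutohedron_{P,S\setminus P}\cap\permutohedron_{Q,S\setminus Q}\neq\emptyset\) forces \(P\subset Q\); in that case the intersection is exactly the codimension-\(2\) face \(\permutohedron_{P,Q\setminus P,S\setminus Q}\). Because both ordered bipartitions \((P,S\setminus P)\) and \((Q,S\setminus Q)\) are refined by the tripartition \((P,Q\setminus P,S\setminus Q)\), this face is a facet of each of \(\permutohedron_{P,S\setminus P}\) and \(\permutohedron_{Q,S\setminus Q}\). Hence \(\partial_i\permutohedron_S\cap\partial_j\permutohedron_S=\coprod_{P\subset Q}\permutohedron_{P,Q\setminus P,S\setminus Q}\) is a multifacet of both, completing the verification. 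The only genuinely non-routine step is the inequality estimate; everything else is bookkeeping with ordered partitions of \(S\).
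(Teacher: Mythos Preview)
The paper does not supply its own proof of this proposition; it is simply quoted from \cite[Lemma 3.14]{Lawson_2020}. Your argument is correct and stands on its own: simplicity of \(\permutohedron_S\) follows from the observation that at a vertex \(v=\sigma\acts S\) the facet equation \(\sum_{i\in P}v_i=\tau_{\card P}\) forces \(P=\sigma(\{1,\dots,\card P\})\), and your telescoping inequality \(2\tau_i-(\tau_r+\tau_{2i-r})=\sum_{k=1}^{i-r}(s_{r+k}-s_{i+k})<0\) cleanly rules out intersections of same-cardinality facets (and, in its \(i<j\) variant, forces \(P\subset Q\) whenever \(\permutohedron_{P,S\setminus P}\cap\permutohedron_{Q,S\setminus Q}\neq\emptyset\)). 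One small point worth making explicit: when \(P\cap Q=\emptyset\) or \(P\cup Q=S\) the corresponding half-space constraint is not among the defining ones, but the needed inequality still holds (as an equality in the latter case, and trivially with \(\tau_0=0\) in the former), so the estimate goes through unchanged.
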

For any face \(\permutohedron_{P_1, \dotsc, P_{k+1}}\), there are \(2^k\) faces that contain it, all of the form \(\permutohedron_{P_1, \dotsc, P_i \cup \dotsb \cup P_j, \dotsc, P_{k+1}}\); for the sake of this statement, we treat \(\permutohedron_S\) as the 'face' corresponding to the trivial partition. Denote by \(C_{P_1, \dotsc, P_{k+1}}\) the convex hull of barycentra of all the faces containing \(\permutohedron_{P_1, \dotsc, P_{k+1}}\)
\begin{lemma}\cite[cf. Lemma 3.15]{Lawson_2020}
  \label{permutohedron cubical cpx}
  Each of the \(C_{P_1, \dotsc, P_{k+1}}\) is combinatorially equivalent to a \(k\)-dimensional cube, and these cubes form a cubical subdivision of \(\permutohedron_S.\)
\end{lemma}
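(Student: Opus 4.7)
My approach is to reduce to a well-understood top-dimensional case. I will first verify the claim for the top-dimensional cubes $C_{\{q_1\}, \dots, \{q_n\}}$, where the partition is into singletons; these will turn out to be the Weyl chambers of the $\Sigma_n$-action on $\permutohedron_S$, and the tiling of $\permutohedron_S$ by the $n!$ such chambers will realize the cubical subdivision. For general $(P_1, \dots, P_{k+1})$, I will then show that $C_{P_1, \dots, P_{k+1}}$ is the common face of all chambers whose singleton partition refines $(P_1, \dots, P_{k+1})$, hence is itself combinatorially a cube.

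Since $\Sigma_n$ acts transitively on chambers, it suffices to treat $q = \id$. The cube $C_{\{1\}, \dots, \{n\}}$ is the convex hull of $2^{n-1}$ barycenters, which I would identify with the chamber $\chi = \{x \in \permutohedron_S : x_1 \leq \dots \leq x_n\}$. Inside the hyperplane $\sum x_i = \tau_n$, the chamber is cut out by the $2(n-1)$ facet inequalities $O_r = \{x_r \leq x_{r+1}\}$ and $B_r = \{\sum_{i \leq r} x_i \geq \tau_r\}$ for $r = 1, \dots, n-1$. The combinatorial core is the observation that $O_r$ and $B_r$ are opposite, i.e.\ never simultaneously tight on $\chi$: if both equalities held then the adjacent bound constraints $\sum_{i \leq r+1} x_i \geq \tau_{r+1}$ and $\sum_{i \leq r-1} x_i \geq \tau_{r-1}$ would force $x_r \geq s_{r+1}$ and $x_r \leq s_r$ respectively, contradicting the strict increase of $S$. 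With these $n-1$ opposite pairs in hand, I change coordinates to $a_r = \sum_{i \leq r} x_i - \tau_r$ (so that $a_0 = a_n = 0$), rewriting $\chi$ as $\{a : a_r \geq 0,\, 2a_r - a_{r-1} - a_{r+1} \leq s_{r+1} - s_r\}$. For each $T \subseteq \{1, \dots, n-1\}$, the mixed system ``$a_r = 0$ for $r \in T$ and $2a_r - a_{r-1} - a_{r+1} = s_{r+1} - s_r$ for $r \notin T$'' then has a unique solution via the invertibility of the discrete Dirichlet Laplacian on each subinterval of $\{1, \dots, n-1\} \setminus T$. This exhibits $2^{n-1}$ vertices in natural bijection with $\{0, 1\}^{n-1}$, confirming the combinatorial $(n-1)$-cube structure, and a direct calculation identifies each such vertex with the barycenter $b_T$ from the definition of the cube.

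For general $(P_1, \dots, P_{k+1})$, I claim $C_{P_1, \dots, P_{k+1}} = \bigcap_Q C_{Q_1, \dots, Q_n}$, where $Q$ ranges over singleton-refinements of $(P_1, \dots, P_{k+1})$. The inclusion ``$\subseteq$'' holds because every face containing $\permutohedron_{P_1, \dots, P_{k+1}}$ also contains every refining $\permutohedron_{Q_1, \dots, Q_n}$, so its barycenter appears as a vertex of every $C_{Q_1, \dots, Q_n}$. Conversely, a point in the intersection must be constant on each $P_l$ (as opposite orderings of any two elements of $P_l$ occur in different refining chambers), so it lies in the convex hull of the $b_T$'s. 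Being the common face of $(n-1)$-cubes obtained by fixing $n-1-k$ order facets $O_r$ (those indexed by the gaps of the singleton refinements that $(P_1, \dots, P_{k+1})$ merges), $C_{P_1, \dots, P_{k+1}}$ is combinatorially a $k$-cube. The tiling claim is then immediate: a generic $x \in \permutohedron_S$ with distinct coordinates belongs to a unique chamber, determined by the sorting permutation of its coordinates, and coordinate ties correspond precisely to points on lower-dimensional intersections of chambers.

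The main obstacle is the combinatorial cube claim for a single chamber. While the facet count $2(n-1)$ and vertex count $2^{n-1}$ match those of a cube, rigorously identifying the face lattice of $\chi$ with that of the standard cube requires the discrete-Laplacian argument (or an equivalent explicit parametrization) to simultaneously produce all $2^{n-1}$ vertices and verify that each $j$-element selection of facets containing no opposite pair intersects transversally in a nonempty $(n-1-j)$-face.
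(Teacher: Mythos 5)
The paper itself gives no proof here; it cites Lawson--Lipshitz--Sarkar \cite[Lemma 3.15]{Lawson_2020} directly, so there is no argument of the paper's own to compare against, and your proposal must be read as a standalone argument. Your Weyl-chamber approach is correct in its main thrust and is a genuinely computational alternative to the combinatorial approach one would expect in the reference (which exploits that the permutohedron is simple, hence every face interval $[\permutohedron_{P_1,\dots,P_{k+1}},\,\permutohedron_S]$ is Boolean, and grouping barycentric simplices by min--max flags yields cubes). Concretely, I checked the identification of $C_{\{1\},\dots,\{n\}}$ with $\{x_1\le\cdots\le x_n\}\cap\permutohedron_S$, the change of coordinates $a_r=\sum_{i\le r}x_i-\tau_r$, the opposite-facet claim for $(B_r,O_r)$ including the boundary cases $r=1,n-1$ via $a_0=a_n=0$, the maximum-principle positivity $a_r\ge 0$ for the Dirichlet solutions, and the matching of each solution $v_T$ with the barycenter $b_T$ (the system $a_r=0$ for $r\in T$, $x_r=x_{r+1}$ for $r\notin T$ forces the block-averaging that defines a barycenter); all of these are right.

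The gap you flag is real: facet count $2(n-1)$ and vertex count $2^{n-1}$ do not by themselves pin down a cube (a vertex-truncated triangular prism is a simple $3$-polytope with $6$ facets and $8$ vertices but is not a $3$-cube). But it closes from what you already have, and you do not need the full transversality statement for all $j$-subsets of facets. Since no vertex of $\chi$ lies on both $B_r$ and $O_r$, every vertex lies on at most, hence exactly, $n-1$ facets, one from each pair, so $\chi$ is simple. For two vertices $v_T,v_{T'}$ with $|T\triangle T'|=1$, they share exactly $n-2$ facets, and the face cut out by those $n-2$ facets contains no third vertex $v_S$ (it would have to satisfy $T\cap T'\subseteq S\subseteq T\cup T'$, leaving only $T,T'$), hence is an edge; conversely any edge joins two vertices sharing $n-2$ facets. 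Thus the $1$-skeleton of $\chi$ is the $(n-1)$-cube graph, and the face lattice of a simple polytope is determined by its graph (Blind--Mani), giving the combinatorial cube. For the general $C_{P_1,\dots,P_{k+1}}$, your $\supseteq$ direction should be made slightly more explicit: the constancy conditions $x_p=x_q$ ($p,q$ in a common block) are exactly the $O_r$-equalities internal to blocks, so inside each $C_Q$ they cut out a face of the cube $C_Q$ whose vertices are precisely the $b_T$ indexed by coarsenings of $P$ — that face is $C_P$. Finally, to get the cubical \emph{complex} structure (not just the top-dimensional tiling) you should also note that $C_P\cap C_{P'}$ is either empty or equals $C_{P''}$ for the ordered partition $P''$ generated by $P$ and $P'$, which again follows from the face description inside each common chamber. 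With these additions the argument is complete.
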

The poset of faces of \(\permutohedron_S\) is isomorphic to the poset of internal chains in \(\{0, 1\}^n\): to the face \(\permutohedron_{P_1, \dotsc, P_{k+1}}\) one associates the chain \(u^1 > \dotsb > u^k\) with
\[u^j_i =
  \begin{cases}
    1, & s_i \in P_1 \cup \dotsb \cup P_j, \\
    0, & s_i \in P_{j+1} \cup \dotsb \cup P_{k+1}.
  \end{cases}
\]
Consider now the action of a cyclic subgroup \(\langle \sigma \rangle \subseteq \Sigma_n\) on \(\real^n\). The sets \(\permutohedron_S\) and \(\{0, 1\}^n\) are invariant, and so carry the induced action. The poset of faces of \(\permutohedron_S\) also carries this action,
via the corresponding action on the poset of internal chains in \(\{0, 1\}^n\): \[g \acts (u^1 > \dotsb > u^k) = (g \acts u^1 > \dotsb > g \acts u^k).\]
Now, the fixed point poset of the latter action is isomorphic to the poset of internal chains in some other \(\{0, 1\}^{n'}.\) As the \(\langle \sigma \rangle\)-action descends also to the set of barycentra of faces of \(\permutohedron_S\), the following holds.
\begin{prop}
  \label{prop: permutohedra fixed points}
  The fixed-point set \(\permutohedron_S^{\langle \sigma \rangle}\) is a cubical subcomplex, combinatorially equivalent to a lower-dimensional permutohedron.
\end{prop}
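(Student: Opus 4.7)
The plan is to use the cubical subdivision of \cref{permutohedron cubical cpx} directly. Its $k$-cubes $C_{P_1, \dotsc, P_{k+1}}$ are indexed by ordered partitions of $\{1, \dotsc, n\}$ into $k+1$ nonempty parts, and their vertices are the barycenters of the corresponding faces of $\permutohedron_S$. Since $\sigma$ acts by coordinate permutation on $\real^n$, it carries each face $\permutohedron_{Q_1, \dotsc, Q_r}$ to $\permutohedron_{\sigma(Q_1), \dotsc, \sigma(Q_r)}$, and hence permutes the vertices of the subdivision by the same rule. The idea is to identify $\permutohedron_S^{\langle \sigma \rangle}$ with the union of exactly those cubes $C_{P_1, \dotsc, P_{k+1}}$ each of whose parts is a union of $\sigma$-orbits, and then to recognize this subcomplex as the cubical subdivision of a lower-dimensional permutohedron.

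First, I would show that a cube $C_{P_1, \dotsc, P_{k+1}}$ is setwise $\sigma$-invariant if and only if each $P_i$ is $\sigma$-invariant, and that in this case the cube is pointwise fixed. The $2^k$ vertices of $C_{P_1, \dotsc, P_{k+1}}$ are the barycenters of $\permutohedron_{P_1, \dotsc, P_i \cup \dotsb \cup P_j, \dotsc, P_{k+1}}$, and $\sigma$ permutes them via the ordered-tuple assignment $(P_1, \dotsc, P_{k+1}) \mapsto (\sigma P_1, \dotsc, \sigma P_{k+1})$. Since this assignment preserves the ordering of the tuple, it fixes the tuple precisely when each $P_i$ is $\sigma$-invariant; in that case every vertex is individually fixed and the whole cube is pointwise fixed by convexity. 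Moreover, any $x \in \permutohedron_S^{\langle \sigma \rangle}$ lies in the relative interior of a unique cube of the subdivision, which must then be setwise fixed and hence, by the above, pointwise fixed. This exhibits $\permutohedron_S^{\langle \sigma \rangle}$ as the cubical subcomplex claimed.

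Finally, let $O_1, \dotsc, O_{n'}$ be the $\sigma$-orbits on $\{1, \dotsc, n\}$ and pick any strictly increasing $S' \in \real^{n'}$. Ordered partitions of $\{1, \dotsc, n\}$ into unions of orbits are in natural bijection with ordered partitions of $\{O_1, \dotsc, O_{n'}\}$, and this bijection is face-inclusion-preserving. By \cref{permutohedron cubical cpx} applied to $\permutohedron_{S'}$, the face poset of cubes of our subcomplex matches that of $\permutohedron_{S'}$, yielding the claimed combinatorial equivalence. The most delicate point I expect is the ``setwise fixed implies pointwise fixed'' implication for cubes, which rests on the observation that the $\sigma$-action on vertices is governed by the coordinatewise action on ordered partitions and admits no hidden internal symmetry that could fix a cube as a set without fixing each vertex individually.
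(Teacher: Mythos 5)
Your argument is correct and follows essentially the same route the paper sketches before the proposition: the cubes $C_{P_1, \dotsc, P_{k+1}}$ of the subdivision from \cref{permutohedron cubical cpx} are indexed by ordered partitions of $S$ (equivalently, by internal chains in $\{0,1\}^n$, the language the paper prefers), and $\sigma$ permutes them by permuting the parts of the partition. Your explicit verification that a setwise-invariant cube is pointwise fixed — using linearity of the $\sigma$-action on $\real^n$ together with $\sigma$-invariance of each $P_i$ and hence of every coarsening — fills in the step the paper compresses into ``the $\langle\sigma\rangle$-action descends also to the set of barycentra of faces of $\permutohedron_S$.''
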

Indeed, \cite[Proposition B.18]{borodzik2021khovanov} establishes that this can be realized as a diffeomorphism of \((n'-1)\)-dimensional \(\cornered{n'-1}\)-manifolds.

%%% Local Variables:
%%% mode: latex
%%% TeX-master: "../main"
%%% End:

\subsection{The cube flow category}
\label{sec: cube flow cat}
The free topological category construction applied to a poset \(\mathcal C\) yields a topological category \(\free \mathcal C\) whose morphism spaces are naturally decomposed as cubical complexes. Abouzaid and Blumberg \cite[Section 2.1]{abouzaid2021arnold} used this in the case that \(\mathcal C= \mathcal P\) is a \emph{finite-dimensional poset} in the sense that between for any two elements \(p, q \in \mathcal P\), lengths of chains in \(\mathcal P\) starting at \(p\) and ending at \(q\) form a finite set. The upshot is that moduli spaces of \(\free \mathcal P\) are finite-dimensional and have a well-defined boundary.

Consider the topological category \(\cube(n) \coloneqq \free \two{n}\) as defined in \Cref{sec: free topological category}. We require a description of morphisms spaces of \(\cube(n)\); one such description appears as early as \cite{https://doi.org/10.1112/jlms/s2-9.1.23}, see also \cite{BLOOM20113216}.

Following the definitions in \Cref{sec: free topological category}, given \(u, v \in \two{n}\), the space of morphisms \(\cube(n)(u, v)\) is
\[\disjoint_{m \geq 0} \two{n}(u, v)_m \times [0, 1]^m/{\sim},\]
where \(\two{n}(u, v)_m\) is the set of chains in \(\two{n}\) of length \(m\), lying entirely between \(u\) and \(v\). Here, for \(P \subseteq S\) two chains between \(u\) and \(v\), \(\sim\) identifies the cube \([0, 1]^P\) corresponding to \(P\) to the subset of \([0, 1]^S\) obtained by inserting \(1\)s at \(S \setminus P\)-coordinates.
Thus, \(\cube(n)(u, v)\) is isomorphic as a cubical complex to the permutohedron \(\permutohedron^{u \Delta v}\) as described in \Cref{permutohedron cubical cpx}; here,
\[u \Delta v = \{i \in \{1, \dotsc, n\} \suchthat u_i = 1, v_i = 0\}.\]
Likewise, the composition maps in \(\cube(n)\) recover the multifacets \(\partial \cube(n)(u, v)\) making up the boundary of \(\cube(n)(u, v)\) via
\[\partial_i C(n)(u, v) = \coprod_{\abs{v} - \abs{w} = i} \circ (\cube(n)(v, w) \times \cube(n)(u, v)) \subseteq \partial C(n)(u, v).\]

\begin{df}
  The \emph{cube flow category} is the topological category \(\cube(n) = \free \two{n}\) equipped with the grading function \(\ob(\cube(n)) = \two{n} \to \mathbb Z\) defined by \((u_1, \dotsc, u_n) \mapsto \abs{u} = u_1 + \dotsb + u_n\).
\end{df}

Consider again the \(\mathbb Z_m\)-action on the category \(\two{n'} = \two{nm} \cong (\two{n})^m\), as in \Cref{section: the cube}. The topological category \(\cube(nm)\) carries an action of \(\mathbb Z_m\), by functors \(\gamma_g \colon \cube(nm) \to \cube(nm)\), \(g \in \mathbb Z_m\), and the maps of moduli spaces \((\gamma_g)_{u, v}\) are isomorphisms of cubical complexes. For \(u \in \cube(nm)\), denote by \((\integer_m)_u\) the isotropy group of \(u\) and consider the \((\integer_m)_u\)-representation \(V_u = \real^{u \Delta 0}\).

\begin{prop}(\cite[Proposition 3.10]{borodzik2021khovanov})
  \label{cube action gives cube flow cat}
  The \(\mathbb Z_m\)-action defines a structure of \(\integer_m\)-flow category on \(\cube(n)\), equipped with the grading function \(\gr_{\integer_m}(u) = V_u\).
\end{prop}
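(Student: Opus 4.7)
The plan is to verify the conditions (EFC-\ref{bps identity})--(EFC-\ref{bps isotropy}) of \Cref{bps def}. The groundwork is to recognize $\cube(nm)$ as a flow category in the sense of \Cref{df:fc}: its morphism spaces $\cube(nm)(u, v) \cong \permutohedron^{u \Delta v}$ carry the multifaceted manifold structure of \Cref{permutohedron n-mfld}, and the composition maps of the free topological category recover the required boundary decomposition for grading $\gr(u) = \abs{u}$, as indicated in the discussion immediately preceding the proposition.

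The $\integer_m$-action on $(\two{n})^m$ by cyclic permutation of factors extends to functors $\gamma_g \colon \cube(nm) \to \cube(nm)$. On each moduli space $\permutohedron^{u \Delta v} \subseteq \real^{u \Delta v}$ the restriction of $\gamma_g$ is the induced permutation of coordinates, which is a diffeomorphism of $\corner{\abs{u}-\abs{v}-1}$-manifolds; compatibility with composition is immediate because concatenation of chains in $\two{nm}$ is $\integer_m$-equivariant. This establishes (EFC-\ref{bps identity})--(EFC-\ref{bps compatibility condition}). Next, the representation $V_u = \real^{u \Delta 0}$ is naturally an $(\integer_m)_u$-representation, since an element fixing $u$ permutes its support; its real dimension equals $\abs{u}$, and the bijection of supports induced by $g$ provides an isomorphism $V_u \cong V_{g \acts u}$ intertwining the conjugation map $v_g$, verifying axioms (EFC-4)--(EFC-6).

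The main step is (EFC-\ref{bps isotropy}). Here $G_{u,v} = (\integer_m)_u \cap (\integer_m)_v$, under which $\permutohedron^{u \Delta v}$ is a compact manifold acted on by coordinate permutation. From the disjoint decomposition of $G_{u,v}$-sets $u \Delta 0 = (v \Delta 0) \sqcup (u \Delta v)$ one obtains an isomorphism of $G_{u,v}$-representations $V_u \cong V_v \oplus \real^{u \Delta v}$; consequently $\restrict{V_u}{G_{u,v}} - \restrict{V_v}{G_{u,v}} - \real$ has dimension $\abs{u} - \abs{v} - 1$, matching that of $\permutohedron^{u \Delta v}$. The main bookkeeping obstacle is to confirm that the $G_{u,v}$-tangent representation of $\permutohedron^{u \Delta v}$ equals $\real^{u \Delta v} - \real$ as virtual $G_{u,v}$-representations; this reduces to noting that $\permutohedron^{u \Delta v}$ is a codimension-one submanifold of $\real^{u \Delta v}$ cut out by a $G_{u,v}$-invariant affine hyperplane whose normal is the $G_{u,v}$-fixed all-ones vector, so the identification is immediate.
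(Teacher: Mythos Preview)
Your argument is sound. The paper itself does not supply a proof of this proposition; it is stated as a citation to \cite[Proposition 3.10]{borodzik2021khovanov}, so there is no in-paper argument to compare against. What you have written is precisely the direct verification of axioms (EFC-1)--(EFC-7) that the cited reference carries out: the functoriality of $\gamma_g$ on $\free(\two{nm})$ handles (EFC-1)--(EFC-3), the support representations $V_u=\real^{u\Delta 0}$ take care of (EFC-4)--(EFC-6), and the identification of the tangent representation of $\permutohedron^{u\Delta v}$ with $\real^{u\Delta v}\ominus\real$ via the $G_{u,v}$-fixed normal direction $(1,\dotsc,1)$ is exactly the content of (EFC-7). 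Nothing is missing.
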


In keeping with the conventions of \cite{borodzik2021khovanov}, we denote this category by \(\cube_\sigma(n')\), \(\sigma\) referring to the permutation of \([nm]\) of order \(m\) which defines the action.

\begin{df}(\cite[Definition 3.11]{borodzik2021khovanov})
  \label{bps e-cfc definition}
  A \emph{\(\integer_m\)-equivariant cubical flow category} is a \(\integer_m\)-equivariant flow category equipped with a \(\integer_m\)-cover \(f \colon \susp^V \mathcal C \to C_{\sigma}(n'),\) for \(\sigma\) of order \(m\) in \(\Sigma_{n'},\) and for some \(\integer_m\)-virtual representation \(V.\)
\end{df}

%%% Local Variables:
%%% mode: latex
%%% TeX-master: "../main"
%%% End:

\subsection{Equivariant cubical neat embeddings}
Fix an action of \(G = \integer_m\) on \(\two{n}\) and denote the induced equivariant flow category by \(\cube_\sigma(n)\). Let \(V \in \reps(G)\) be an orthogonal \(G\)-representation, \(u, v \in \ob(\cube_\sigma(n))\), \(u > v\). Denote by \(V_{u, v}\) the restriction of the representation \(V\) to the subgroup \(G_{u, v} = G_u \cap G_v\). Let moreoever \(e_\bullet = (e_0, \dotsc, e_{n-1})\) be a sequence of non-negative integers. Define
\[E(V)_{u, v} = \prod_{i = \abs{v}}^{\abs{u} - 1} B_R(V_{u, v})^{e_i} \times \cube_\sigma(u, v).\]
For any \(g \in G\), there is a map 
\[g \cdot (-) \colon E(V)_{f(x), f(y)} \to E(V)_{f(x), f(y)}\]
taking 
\(\cube_\sigma(n)(u, v)\) to \(\cube_{\sigma}(n)(gu, gv)\)
and
\(V_{u, v}\) to \(V_{gu, gv}\).

\begin{df}(\cite[Definition 3.14]{borodzik2021khovanov}; cf. \cite[Definition 3.25]{Lawson_2020})
  \label{df: equivariant neat embedding}
  Let \((\mathcal C, f \colon \mathcal \susp^VC \to \cube_\sigma(n))\) be an equivariant cubical flow category. An \emph{equivariant cubical neat embedding} of \(\mathcal C\), relative representation \(V \in \reps(G)\) and relative sequence \(e_\bullet = (e_0, \dotsc, e_{n-1}) \in \naturals^n\) is a collection of \(G_{x, y}\)-equivariant neat embeddings \(\iota_{x, y} \colon \modulispace(x, y) \to E(V)_{f(x), f(y)}\) such that:
  \begin{enumerate}[(CNE-1)]
  \item \label{cne1} for all \(x, y \in \ob(C(n))\), the following diagram commutes:
    \begin{equation*}
      \begin{tikzcd}[column sep = huge, row sep = huge]
        \modulispace_{\mathcal C}(x, y) \arrow{r}{\iota_{x, y}} \arrow[swap]{dr}{f}
        & E(V)_{f(x), f(y)} \arrow{d}{\pi_2} \\
        & C_\sigma(n)(f(x), f(y)),
      \end{tikzcd}
    \end{equation*}
  \item \label{cne2} for all \(u, v \in \ob(\cube_\sigma(n)),\) the map
    \[\coprod_{\substack{x, y \in \ob(\mathcal C) \\ f(x) = u, f(y) = v}} \iota_{x, y} \colon \coprod_{\substack{x, y \in \ob(\mathcal C) \\ f(x) = u, f(y) = v}} \modulispace_{\mathcal C}(x, y) \to E(V)_{u, v}\]
    is a neat embedding,
  \item \label{cfc embedding commutativity} for all \(x, y, z \in \ob(\mathcal C),\) the following diagram commutes:
    \begin{equation*}
      \begin{tikzcd}
        \modulispace_{\mathcal C}(y, z) \times \modulispace_{\mathcal C}(x, y) \arrow{r}{\circ} \arrow[swap]{d}{\iota_{y, z} \times \iota_{x, y}}
        & \modulispace_{\mathcal C}(x, z) \arrow{d}{\iota_{x, z}} \\
        E(V)_{f(y), f(z)} \times E(V)_{f(x), f(y)} \arrow{r}{\circ}
        & E(V)_{f(x), f(z)}.
      \end{tikzcd}
    \end{equation*}
  \item \label{cne4} for all \(x, y \in \ob(\mathcal C)\) and all \(g \in G\), the following diagram commutes:
    \begin{equation*}
    \begin{tikzcd}
      \modulispace_{\mathcal C}(x, y) \arrow{r}{\iota_{x, y}} \arrow{d}[swap]{(\mathcal G_g)_{x, y}} &
      E(V)_{f(x), f(y)} \arrow{d}{(g, \gamma_g)} \\
      \modulispace_{\mathcal C}(gx, gy) \arrow{r}{\iota_{gx, gy}} &
      E(V)_{f(gx), f(gy)}
    \end{tikzcd}
  \end{equation*}
  \end{enumerate}
\end{df}

\begin{prop}(\cite[Proposition 3.16]{borodzik2021khovanov})
  Any equivariant cubical flow category admits an equivariant cubical neat embedding.
\end{prop}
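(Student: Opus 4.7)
The plan is to prove this by induction on $k = \gr(x) - \gr(y)$, constructing the embeddings $\iota_{x,y}$ one grading difference at a time while simultaneously enlarging the integers $e_\bullet$ and choosing the representation $V$. Working orbit by orbit under the $G$-action on ordered pairs of objects will reduce the construction to a single pair $(x_0, y_0)$ per orbit, with stabilizer $G_{x_0, y_0}$; once a $G_{x_0, y_0}$-equivariant embedding is specified, the translates $\iota_{gx_0, gy_0} := (g, \gamma_g) \circ \iota_{x_0, y_0} \circ (\mathcal G_{g^{-1}})_{gx_0, gy_0}$ are well-defined and automatically satisfy condition (CNE-\ref{cne4}).

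For the base case $k=1$, the moduli space $\modulispace_{\mathcal C}(x, y)$ is a finite $G_{x,y}$-set and $\cube_\sigma(n)(f(x), f(y))$ is a point, so after taking $e_{|f(y)|}$ large enough one can find a $G_{x,y}$-equivariant injection into $B_R(V_{f(x), f(y)})^{e_{|f(y)|}}$; further enlarging $e_{|f(y)|}$ arranges disjointness of the images coming from distinct pairs $(x,y)$ over the same $(f(x), f(y))$, which gives (CNE-\ref{cne2}). For the inductive step, assume $\iota_{x',y'}$ has been constructed for all pairs with $\gr(x')-\gr(y') < k$ satisfying (CNE-\ref{cne1})–(CNE-\ref{cne4}). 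Condition (CNE-\ref{cfc embedding commutativity}) then forces the restriction of $\iota_{x,y}$ to $\partial_i \modulispace_{\mathcal C}(x,y) = \coprod_{z} \modulispace_{\mathcal C}(z,y) \times \modulispace_{\mathcal C}(x,z)$ to be the product embedding composed with the composition map in $E(V)$; these assemble to a well-defined $G_{x,y}$-equivariant neat embedding of the multifaceted boundary $\partial \modulispace_{\mathcal C}(x,y)$ into $\partial E(V)_{f(x), f(y)}$, compatible with the projection to $\partial \cube_\sigma(n)(f(x), f(y))$ by the inductive form of (CNE-\ref{cne1}).

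The heart of the argument is the extension of this partial embedding to the interior. For $e_i$ large enough and $V$ big enough, an equivariant relative Whitney-type embedding theorem for compact $\corner{m}$-manifolds with $G_{x,y}$-action produces a $G_{x,y}$-equivariant neat embedding of $\modulispace_{\mathcal C}(x,y)$ into $E(V)_{f(x), f(y)}$ extending the prescribed boundary embedding; by first lifting $f \colon \modulispace_{\mathcal C}(x,y) \to \cube_\sigma(n)(f(x), f(y))$ to the target via the projection $\pi_2$ and then choosing the transverse extension in the $B_R(V_{f(x),f(y)})^{e_\bullet}$-direction, one also secures (CNE-\ref{cne1}). Finally, by enlarging $e_\bullet$ once more, one arranges that images of distinct pairs $(x,y)$ over the same $(u,v)$ are disjoint, giving (CNE-\ref{cne2}) at the current level.

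The main obstacle is the equivariant extension step: one needs a version of the neat embedding extension lemma for $\cornered{k-1}$-manifolds that preserves both the $G_{x,y}$-action and the fiberwise structure over $\cube_\sigma(n)(f(x),f(y))$. This rests on equivariant tubular neighborhoods in representations $V$ large enough that generic $G_{x,y}$-equivariant maps into $B_R(V_{f(x),f(y)})^{e_\bullet}$ are embeddings transverse to all the appropriate strata. Careful bookkeeping is also required to ensure the single choice of $V$ and $e_\bullet$ works uniformly across all finitely many orbits and all grading differences $k$, which is handled by a single up-front choice bounding the equivariant dimensions needed at each level.
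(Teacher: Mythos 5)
Your proposal follows the standard inductive strategy used in \cite{borodzik2021khovanov} for this result (and in \cite{Lawson_2020} for its non-equivariant counterpart): induct on the grading difference $k$, note that for $k=1$ the moduli spaces are finite sets and $\cube_\sigma(n)(u,v)$ is a point, observe that the boundary embedding at level $k$ is forced by (CNE-\ref{cfc embedding commutativity}) and the lower levels, and extend over the interior by an equivariant relative Whitney-type embedding theorem, enlarging $V$ and $e_\bullet$ as one goes; the finiteness of $\ob(\mathcal C)$ makes a uniform terminal choice of $V$ and $e_\bullet$ possible. This is essentially the argument from the cited reference.

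One organizational remark: you reduce to $G$-orbits of \emph{ordered pairs of objects} $(x,y)$, pick a representative, construct a $G_{x_0,y_0}$-equivariant embedding, and translate; you then must re-argue disjointness of the translates over the same cube pair $(u,v)$ to get (CNE-\ref{cne2}). It is slightly cleaner to reduce to $G$-orbits of pairs $(u,v) \in \two{n}\times\two{n}$ and construct, for a representative $(u_0,v_0)$, a single $G_{u_0,v_0}$-equivariant neat embedding of the whole disjoint union $\coprod_{f(x)=u_0, f(y)=v_0}\modulispace_{\mathcal C}(x,y)$ into $E(V)_{u_0,v_0}$; then (CNE-\ref{cne2}) is automatic rather than an afterthought, and (CNE-\ref{cne4}) still holds by translating. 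Either version works, but the latter avoids the extra equivariant-offsetting step you gesture at. You should also be slightly more explicit that $V$ must contain enough copies of the regular representation $\real G$ so that $V_{u,v}$ restricted to any $G_{x,y}$ admits equivariant injections of the relevant finite $G_{x,y}$-sets and supports the general-position arguments; as stated, ``choosing the representation $V$'' leaves this to the reader.
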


In order to define the geometric realization of an equivariant cubical flow category, we need certain extensions of neat embeddings.

\begin{df}(see \cite[Section 3.6]{borodzik2021khovanov}, \cite[Definition 3.25]{Lawson_2020}, \cite[Convention 3.27]{Lawson_2020})
  \label{e-fne}
   An \emph{equivariant framed cubical neat embedding} consists of extensions of the maps \(\iota_{x, y}\) to \(G_{x, y}\)-equivariant maps
   \[\bar{\iota}_{x, y} \colon \prod_{i = \abs{f(y)}}^{\abs{f(x)} - 1} B_\varepsilon(V_{f(x), f(y)})^{e_i} \times \modulispace_{\mathcal C}(x, y) \to E(V)_{f(x), f(y)}\]
   satisfying the conditions analogous to those of \Cref{df: equivariant neat embedding}:
   \begin{enumerate}[(FNE1)]
  \item \label{framed embedding 1} for all \(x, y \in \ob(\cube(n))\), the following diagram commutes:
    \begin{equation*}
      \begin{tikzcd}[column sep = huge, row sep = huge]
        \prod_{i=\abs{f(y)}}^{\abs{f(x)}-1} B_\varepsilon(V)^{e_i} \times \modulispace_{\mathcal C}(x, y) \arrow{r}{\tilde{\iota}_{x, y}} \arrow{d}[swap]{\pi_2} &
        E(V)_{f(x), f(y)} \arrow{d}{\pi_2} \\
        \modulispace_{\mathcal C}(x, y) \arrow{r}{f} &
        \cube(n)(u, v),
      \end{tikzcd}
    \end{equation*}
  \item \label{framed embedding 2} for all \(u > v \in C(n)\) the induced map
  \[\coprod_{f(x) = u, f(y) = v} \bar{\iota}_{x, y} \colon \coprod_{f(x) = u, f(y) = v} \left[\prod_{i=\abs{v}}^{\abs{u}-1} B_\varepsilon(V)^{e_i} \right] \times \modulispace_{\mathcal C}(x, y) \to E(V)_{u, v}\]
  is an embedding,
  \item \label{framed embedding 3} for all \(x, y, z \in \ob(\mathcal C)\), the following diagram commutes:
    \begin{equation*}
      \begin{tikzcd}
        \prod_{i = \abs{f(y)}}^{\abs{f(z)}-1} B_\varepsilon(V)^{e_i} \times \modulispace_{\mathcal C}(y, z) \times \prod_{i = \abs{f(x)}}^{\abs{f(y)}-1} B_\varepsilon(V)^{e_i} \times \modulispace_{\mathcal C}(x, y) \arrow{r}{\Upsilon} \arrow{d}[swap]{\tilde{\iota}_{y, z} \times \tilde{\iota}_{x, y}} & 
        \prod_{i = \abs{f(x)}}^{\abs{f(z)}-1} B_\varepsilon(V)^{e_i} \times \modulispace_{\mathcal C}(x, z) \arrow{d}{\tilde{\iota}_{x, z}} \\
        E(V)_{f(y), f(z)} \times E(V)_{f(x), f(y)} \arrow{r}{\circ} &
        E(V)_{f(x), f(z)},
    \end{tikzcd}
  \end{equation*}
  where \(\Upsilon\) merges the \(\varepsilon\)-terms and applies the composition map in \(\mathcal C\) to the moduli spaces.
  \item \label{framed embedding 4}
    The following diagram commutes:
    \begin{equation*}
    \begin{tikzcd}
      \prod_{i=\abs{f(y)}}^{\abs{f(x)}-1} B_\varepsilon(V)^{e_i} \times \modulispace_{\mathcal C}(x, y) \arrow{r}{\tilde{\iota}_{x, y}} \arrow{d}[swap]{(g, (\mathcal G_g)_{x, y})} &
      E(V)_{f(x), f(y)} \arrow{d}{(g, \gamma_g)} \\
      \prod_{i = \abs{f(y)}}^{\abs{f(x)}-1} B_\varepsilon(V)^{e_i} \times \modulispace_{\mathcal C}(gx, gy) \arrow{r}{\tilde{\iota}_{gx, gy}} &
      E(V)_{f(gx), f(gy)}
    \end{tikzcd}
    \end{equation*}
  \end{enumerate}
\end{df}

\begin{prop}
  Any equivariant cubical neat embedding can be framed, granted \(\varepsilon\) small enough.
\end{prop}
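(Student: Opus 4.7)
The plan is to construct the framings $\bar{\iota}_{x,y}$ by induction on the difference $k = \abs{f(x)} - \abs{f(y)}$, using an equivariant version of the tubular neighborhood theorem for manifolds with corners. The starting observation is that the codomain $E(V)_{f(x), f(y)} = \prod_i B_R(V_{f(x), f(y)})^{e_i} \times \cube_\sigma(n)(f(x), f(y))$ comes with a built-in $\prod_i V_{f(x), f(y)}^{e_i}$ factor of ``normal directions'' (with respect to the projection $\pi_2$). By condition CNE\ref{cne1} the image $\iota_{x,y}(\modulispace_{\mathcal C}(x,y))$ is transverse to this factor, and the $G_{x,y}$-equivariant normal bundle of the image inside each fiber of $\pi_2$ is canonically identified with the trivial bundle $\prod_i V_{f(x), f(y)}^{e_i}$. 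An equivariant tubular neighborhood (available for compact $G_{x,y}$-actions on multifaceted manifolds) therefore produces, for each pair, a framed extension $\bar{\iota}_{x,y}$ of some positive radius $\varepsilon_{x,y} > 0$.

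The substantive work is making these framings compatible with composition (FNE\ref{framed embedding 3}) and with the group action (FNE\ref{framed embedding 4}). For the base case $k=1$, the moduli spaces are discrete and any equivariant choice of framing works. For the inductive step, $\partial \modulispace_{\mathcal C}(x,y)$ is assembled from products $\modulispace_{\mathcal C}(z, y) \times \modulispace_{\mathcal C}(x, z)$, and on each such facet the framing is already prescribed by the product $\bar{\iota}_{y, z} \times \bar{\iota}_{x, y}$, matched to $\bar{\iota}_{x,y}$ via the diagram of (FNE\ref{framed embedding 3}). The inductive hypothesis then guarantees that on corners, where two or more facets meet, the various prescriptions coming from different chains $f(x) > z_1 > \dotsb > z_m > f(y)$ agree, thanks to the associativity of composition in $\mathcal C$ and the already-established lower-dimensional instance of FNE\ref{framed embedding 3}. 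Applying the relative equivariant tubular neighborhood theorem rel this prescribed boundary framing extends it $G_{x,y}$-equivariantly to all of $\modulispace_{\mathcal C}(x,y)$.

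Equivariance (FNE\ref{framed embedding 4}) is then handled uniformly: choose one representative $(x, y)$ from each $G$-orbit on pairs of objects of $\mathcal C$, carry out the $G_{x,y}$-equivariant construction above on $\modulispace_{\mathcal C}(x,y)$, and propagate to the rest of the orbit via the functors $\mathcal G_g$, using CNE\ref{cne4} to ensure well-definedness. Since $\ob(\mathcal C)$ is finite, $\varepsilon = \min_{x,y} \varepsilon_{x,y}$ is positive, yielding a uniform framing radius. The main obstacle is the corner compatibility in the inductive step: one must verify that the collared boundary framing assembled from lower-grading framings agrees, as an embedding and not merely up to isotopy, on every stratum of $\partial \modulispace_{\mathcal C}(x,y)$. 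This follows from the explicit product formula in (FNE\ref{framed embedding 3}) applied to the inductive data together with the associativity of composition in the flow category, but verifying it cleanly requires tracking the combinatorics of chains in $\two{n}$ between $f(x)$ and $f(y)$, along the lines of \cite[Section 3.6]{borodzik2021khovanov} and \cite[Convention 3.27]{Lawson_2020}.
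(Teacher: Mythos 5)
Your proposal reaches for an equivariant tubular neighborhood theorem for manifolds with corners, builds the framing by induction on $k = \abs{f(x)} - \abs{f(y)}$, and worries about matching prescribed boundary framings on corners. This would work, but the paper's proof is much shorter, and the difference is instructive.

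The observation your proof circles around but does not use: the ambient space $E(V)_{u,v} = \prod_i B_R(V)^{e_i} \times \cube_\sigma(n)(u,v)$ already comes with a \emph{global, canonical} trivialization of the normal directions, namely the $\prod_i B_R(V)^{e_i}$ factor itself. Hence no tubular neighborhood theorem (equivariant or otherwise) needs to be invoked; one simply \emph{translates}:
\[
\bar{\iota}_{x,y}(t, p) = \bigl(t + \pi^R_{f(x),f(y)}(\iota_{x,y}(p)), \; \pi^M_{f(x),f(y)}(\iota_{x,y}(p))\bigr),
\]
where $\pi^R, \pi^M$ are the projections onto the two factors of $E(V)_{u,v}$. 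This formula is $G_{x,y}$-equivariant because it is built from equivariant maps; conditions FNE1, FNE3, and FNE4 hold \emph{on the nose} because they are literal transcripts of CNE1, CNE3, and CNE4 for $\iota$; and FNE2 follows from CNE1, CNE2, compactness, and transversality of $\iota_{x,y}(\modulispace_{\mathcal C}(x,y))$ with the fibers of $\pi^R$, once $\varepsilon$ is small enough.

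Because the framing satisfies the composition identity exactly rather than merely up to isotopy-rel-boundary, the corner-compatibility problem you flag as the ``main obstacle'' never arises: there is no inductive step, no matching on facets, no choice of orbit representatives. You correctly notice the trivialized normal bundle in your first paragraph, but then revert to the generic machinery anyway instead of following through. Also note that your argument does not explicitly address FNE2 (injectivity of the disjoint union of framings over all $x,y$ with $f(x)=u$, $f(y)=v$); you need to combine CNE2 with compactness to get a uniform $\varepsilon$, which the explicit formula makes transparent.
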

\begin{proof}
  One choice is
  \[\bar{\iota}_{x, y} \colon (t, p) \mapsto (t+\pi_{f(x), f(v)}^R(\iota_{x, y}(p)), \pi_{f(x), f(v)}^M(\iota_{x, y}(p))),\]
  where \(\pi_{u, v}^R \colon E(V)_{u, v} \to \left[\prod_{i = \abs{v}}^{\abs{u} - 1} B_R(V)^{d_i}\right]\), \(\pi_{u, v}^M \colon E(V)_{u, v} \to \cube_\sigma(n)(u, v)\) are the projections. The \(\tilde{\iota}\) thus constructed are equivariant because the \(\iota_{x, y}\), \(\pi^R_{u, v}\) and \(\pi^M_{u, v}\) are.
  The conditions \ref{framed embedding 1}, \ref{framed embedding 3}, \ref{framed embedding 4} also follow from the analogous conditoins \ref{cne1}, \ref{cfc embedding commutativity}, \cref{cne4} placed on \(\iota_{x, y}\). Condition \ref{framed embedding 2} follows from \ref{cne2} together with \ref{cne1}: for \ref{cne1} assures that for all \(p \in \modulispace_{\mathcal C}(x, y)\), \((\pi_{f(x), f(y)}^R)^{-1}(f(p))\) and \(\iota_{x, y}(\modulispace_{\mathcal C}(x, y))\) are transverse in \(E(V)_{f(x), f(y)}\). Hence, for \(\varepsilon\) small enough, the map in \ref{framed embedding 2} is still injective.
\end{proof}

%%% Local Variables:
%%% mode: latex
%%% TeX-master: "../main"
%%% End:

\subsection{Geometric realization of an equivariant cubical flow category}

Let \((\mathcal C, f \colon \susp^V \mathcal C \to \cube_\sigma(n), \iota)\) be an equivariant cubical flow category. Suppose that \(\iota\) has been extended to an equivariant framed cubical embedding. Given \(x \in \ob(\mathcal C)\), write 
\(u = f(x) \in  \two{n}\) and
\[\cell(x) = \prod_{i=0}^{\abs{u} - 1} B_R(V)^{e_i} \times \prod_{i=\abs{u}}^{n-1} B_\varepsilon(V)^{e_i} \times \cube_{\sigma}(n)^+(u, \vec{0}).\]
Here, \(\cube_\sigma(n)^+\) is the topological category \(\free(\two{n}_+)\), so that the morphism spaces are
  \begin{equation}
    \label{cube-plus coherent nerve}
  \cube_\sigma(n)^+(u, \vec{0}) = 
  \begin{cases} 
    \cube_\sigma(n)^+(u, \vec{0}) \times [0, 1], & u \neq 0, \\
    \{0\}, & u = 0. \\
  \end{cases}
  \end{equation}
For any \(x, y \in \ob(\mathcal C)\) with \(f(x) = u > v = f(y)\), the map \(\tilde{\iota}_{x, y}\) furnishes a \(G_{x, y}\)-equivariant embedding
  \[\cell_{x, y} \colon \cell(y) \times \cube_\sigma(n)(x, y) \into \partial \cell(x).\]
  \begin{equation}
    \label{eqn: e-cfc attachment}
    \begin{aligned}
      \cell(y) \times \modulispace_{\mathcal C}(x, y) & \\
      \cong & \prod_{i=0}^{\abs{v} - 1} B_R(V)^{e_i} \times \prod_{i=\abs{u}}^{n-1} B_\varepsilon(V)^{e_i} \times \cube_\sigma(n)^+(v, \vec{0}) \times \left(\prod_{i=\abs{v}}^{\abs{u} - 1} B_\varepsilon(V)^{e_i} \times \modulispace_{\mathcal C}(x, y)\right) \\
      \xinto{\iota_{x, y}} & \prod_{i=0}^{\abs{v} - 1} B_R(V)^{e_i} \times \prod_{i=\abs{u}}^{n-1} B_\varepsilon(V)^{e_i} \times \cube_\sigma(n)^+(v, \vec{0}) \times \left(\prod_{i=\abs{v}}^{\abs{u} - 1} B_R(V)^{e_i} \times \cube_\sigma(n)(x, y)\right) \\
      \into & \partial \cell(x).
    \end{aligned}
  \end{equation}
  The realization \(\cfcrealization{\mathcal C}\) is the CW complex obtained by starting with the basepoint \(\ast\) and attaching cells of increasing gradings \(\abs{x} = \abs{f(x)}\). The attaching map for \(\cell(x)\) sends the image of the map \(\cell_{x, y}\) to \(\cell(y)\) (via the inverse of \(\cell_{x, y}\) composed with projection \(\cell(y) \times \modulispace_{\mathcal C}(x, y) \to \cell(y)\)) and the complement \(\partial \cell(x) \setminus \setsum_{\abs{y} < \abs{x}} \im(\cell_{x, y})\) to \(\ast\).

By \cite[Proposition 3.18]{borodzik2021khovanov}, this produces a \(G\)-cell complex, with cell
\[\mathcal C(x_1) \disjoint \mathcal C(x_2) \disjoint \dotsb \disjoint \mathcal C(x_k) \cong G \times_{G_{x_1}} \left(\prod_{i=0}^{\abs{u}-1} B_R(V)^{e_i} \times \prod_{i = \abs{u}}^{n - 1} B_{\varepsilon}(V)^{e_i} \times B_R(\gr_G(x_1))\right)\]
of type
\((G_x, V^{e_1 + \dotsb + e_{n-1}} \oplus \gr_G(x_1))\)
for \(\{x_1, \dotsc, x_k\}\) an orbit of \(x_1 \in \ob(\mathcal C)\) by the \(G\)-action.

\begin{df}(\cite[Definition 3.19]{borodzik2021khovanov})
  \label{df: e-cfc homotopy type}
  If \((\mathcal C, f \colon \Sigma^W \mathcal C \to \cube_{\sigma}(n))\) is an equivariant cubical flow category with an equivariant cubical neat embedding relative orthogonal \(G\)-representation \(V\) and \((e_1, \dotsc, e_{n-1})\), then the \emph{stable equivariant homotopy type of \(\mathcal C\)} is the formal desuspension
  \[\mathcal X(\mathcal C) = \Sigma^{-W-V^{e_0 + \dotsb + e_{n-1}}} \abs{\abs{\mathcal C}},\]
  where \(\abs{\abs{\mathcal C}}\) is the \(G\)-cell complex described above.
\end{df}

This is considered as an object of the equivariant Spanier-Whitehead category, and the proof of \cite[Theorem 1.2]{borodzik2021khovanov} includes its independence of the choices of \(R, \varepsilon, V, (e_1, \dotsc, e_{n-1}, V)\).

%%% Local Variables:
%%% mode: latex
%%% TeX-master: "../main"
%%% End:

\section{Burnside functors}
\label{sec:burnside}
A Burnside functor is a functor into the Burnside \(2\)-category. After \cite{stoffregen2018localization}, we define a way in which a particular type of homotopy coherent diagram can be described as subordinate to a Burnside functor, in the presence of external group actions on both.
\subsection{The Burnside 2-category}
We reproduce definitions from \cite[Section 4.1]{Lawson_2020}.

\begin{df}
  Let \(X\) and \(Y\) be sets. A \emph{correspondence} from \(X\) to \(Y\) is a set \(A\) together with maps \(s \colon A \to X\), \(t \colon A \to Y.\) \(X\) is then called the source and \(Y\) the target of the correspondence, and \(s\) and \(t\) the source- and target-maps thereof.

  Given correspondences \((A, s_A, t_A)\) from \(X\) to \(Y\) and \((B, s_B, t_B)\) from \(Y\) to \(Z\), the composition \((B, s_B, t_B) \circ (A, s_A, t_A)\) is the correspondence \((C, s, t)\) from \(X\) to \(Z\) given by
  \[C = B \times_Y A = \{(b, a) \in B \times A \suchthat t(a) = s(b)\}, \quad s(b, a) = s_A(a), \quad t(b, a) = t_B(b).\]

  Given correspondences \((A, s_A, t_A)\) and \((B, s_A, t_B)\) from \(X\) to \(Y\), a \emph{morphism of correspondences} from \((A, s_A, t_A)\) to \((B, s_B, t_B)\) is a bijection of sets \(f \colon A \to B\) that commutes with source- and target-maps:
  \[s_A = s_B \circ f, \quad t_A = t_B \circ f.\]
  Composition of morphisms of correspondences is then the usual composition of set maps.
\end{df}

\begin{df}
  The \emph{Burnside category} is the weak 2-category \(\burnside\) of finite sets as objects, correspondences as \(1\)-morphisms and morphisms of correspondences as \(2\)-morphisms.
\end{df}

That \(\burnside\) is a \emph{weak} 2-category means that the identity and associativity axioms hold only up to 2-isomorphism.

We will be working with weak \(2\)-functors from the \(1\)-category \(\twon\) to the Burnside category. These functors are examples of lax \(2\)-functors between weak \(2\)-categories; for a more general definition, see e.g. \cite[Definition 4.2]{Lawson_2020}.

\begin{df}(see \cite[Lemma 4.4]{Lawson_2020}, \cite[Definition 3.3]{stoffregen2018localization})
  Let \(\mathcal C\) denote a small\(1\)-category. A strictly unitary Burnside functor (\emph{Burnside functor} for short in the remainder) \(F \colon \mathcal C \to \mathcal B\) consists of the following data:
  \begin{itemize}
  \item for each object \(v \in \ob(\mathcal C),\) a set \(F(v),\)
  \item for each morphism \(u \xto{A} v\) in \(\mathcal C,\) a correspondence \(F(A)\) from \(F(u)\) to \(F(v),\)
  \item for each pair of morphisms \(u \xto{A} v \xto{B} w\) in \(\mathcal C,\) a map of correspondences
    \[F(A, B) \colon F(B) \times_{F(v)} F(A) \to F(B \circ A).\]

  \end{itemize}
  This data is required to satisfy the following condition: for a triple of morphisms \(u \xto{A} v \xto{B} w \xto{C} x\) in \(\mathcal C,\) the diagram
  \begin{equation*}
    \begin{tikzcd}[row sep = huge, column sep = huge]
      F(C) \times_{F(w)} F(B) \times_{F(v)} F(A) \arrow{r}{\id \times F(A, B)} \arrow[swap]{d}{F(B, C) \times \id}
      & F(C) \times_{F(w)} F(B \circ A) \arrow{d}{F(B \circ A, C)} \\
      F(C \circ B) \times_{F(v)} F(A) \arrow{r}{F(A, C \circ B)}
      & F(C \circ B \circ A)
    \end{tikzcd}
  \end{equation*}
  commutes.
\end{df}

\begin{df}
  A \emph{natural transformation} of Burnside functors \(F_1, F_0 \colon \mathcal C \to \burnside\) consists of another Burnside functor \(J \colon \mathcal C \times \two{} \to \burnside\) such that
  \[\restrict{J}{\mathcal C \times \{1\}} = F_1, \enspace \restrict{J}{\mathcal C \times \{0\}} = F_0.\]
  If moreover for every \(x \in \mathcal C\), the \(1\)-morphism \(J(\id_x \times (1 \to 0))\) is an isomorphism, we call \(J\) a \emph{natural isomorphism}.
\end{df}

In the case of indexing category \(\mathcal C = \two{n}\), the data of a Burnside functor \(\two{n} \to \burnside\) can be simplified as follows.

\begin{lemma}(\cite[Lemma 3.4]{stoffregen2018localization})
  \label{burnside extension}
  Suppose that for any \(u, v, v', w\) in \(\two{n}\) with \(u \geq_1 v, v' \geq_1 w\) there are given: finite sets \(F(v)\), finite correspondences \(F(u, v)\), as well as isomorphisms of correspondences 
  \(F_{u, v, v', w} \colon F(v, w) \circ F(u, v) \to F(v', w) \circ F(u, v')\)
  are given in such a way that:
  \begin{enumerate}[(1)]
  \item \(F_{u, v, v', w} = F^{-1}_{u, v', v, w}\)
  \item \label{burnside extension commutativity} a cube in \(\two{n}\) on the left yields a commutative hexagon of \(2\)-morphisms in \(\mathcal B\) on the right.
    \setlength{\perspective}{2pt}
    \[\begin{tikzcd}[row sep={40,between origins}, column sep={40,between origins}]
      &[-\perspective] w \ar{rr} \ar[from=dd] &[\perspective] &[-\perspective] z \\[-\perspective]
    v' \ar{ru} & & w'' \ar[from=ll, crossing over] \ar{ur} \\[\perspective]
      & v'' \ar{rr} & &  w' \ar{uu} \\[-\perspective]
    u \ar{rr} \ar{ur} \ar{uu} && v \ar[crossing over]{uu} \ar{ur}
\end{tikzcd}
    \qquad
    \begin{tikzcd}[column sep ={25}, row sep={35}, every label/.append style = {font = \large}]
      & \circ \ar{rr}{F_{v, w'', w', z} \times \id} 
      && \circ \ar{dr}{\id \times F_{u, v, v'', w'}} & \\
      \circ \ar{dr}[swap]{F_{v', w, w'', z} \times \id} \ar{ur}{\id \times F_{u, v', v, w''}} & & & & \circ \\
      & \circ \ar{rr}[swap]{\id \times F_{u, v'', v', w}}
      && \circ \ar{ur}[swap]{F_{v'', w', w, z} \times \id}
    \end{tikzcd}
    \]
    %\[(\id \times F_{u, v, v'', w'}) \circ (F_{v, w'', w', z} \times \id) \circ (\id \times F_{u, v', v, w''}) = (F_{v'', w', w, z} \times \id) \circ (\id \times F_{u, v'', v', w}) \circ (F_{v', w, w'', z} \circ \id).\]
  \end{enumerate}
  Then the data can be extended to a Burnside functor \(F \colon \two{n} \to \burnside\), uniquely up to natural isomorphism, so that \(F_{u, v, v', w} = F^{-1}_{u, v', w} \circ F_{u, v, w}.\)
\end{lemma}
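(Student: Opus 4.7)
The strategy is to extend the given $1$- and $2$-dimensional data to a full Burnside functor by picking chain representatives and invoking coherence. For any pair $u \geq v$ in $\two{n}$ with $\abs{u} - \abs{v} = k$, a maximal chain $\gamma = (u = u_0 >_1 u_1 >_1 \cdots >_1 u_k = v)$ is the same as a total ordering of the set $D(u,v) \coloneqq \{i : u_i > v_i\}$, so such chains form a $\Sigma_{D(u,v)}$-torsor. Each $\gamma$ yields a composite correspondence $F_\gamma(u,v) \coloneqq F(u_{k-1}, u_k) \circ \cdots \circ F(u_0, u_1)$ in $\burnside$. I pick once and for all a distinguished chain $\gamma_{u,v}$ for each pair and set $F(u,v) \coloneqq F_{\gamma_{u,v}}(u,v)$.

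Next I construct canonical $2$-isomorphisms $\Phi_{\gamma, \gamma'} \colon F_\gamma(u,v) \to F_{\gamma'}(u,v)$ for arbitrary $\gamma, \gamma'$. Two chains differing by an elementary swap of adjacent coordinates in their underlying ordering differ at a single intermediate vertex, hence at a square face in $\two{n}$, and the hypothesized $F_{u_{i-1}, u_i, \tilde u_i, u_{i+1}}$ (composed horizontally with identity $2$-cells on the unchanged factors of the chain) induces such a $2$-isomorphism. Any two chains are connected by a word in these elementary swaps corresponding to a factorization of a permutation in $\Sigma_{D(u,v)}$ into elementary transpositions. Well-definedness of $\Phi_{\gamma, \gamma'}$ reduces via Matsumoto's theorem to checking coherence against the three families of Coxeter relations: $s_i^2 = 1$ is exactly hypothesis (1); the commuting relations $s_i s_j = s_j s_i$ for $\abs{i-j} \geq 2$ hold automatically by the interchange law in the bicategory $\burnside$, since the two swaps happen in disjoint factors of the composition; and the braid relation $s_i s_{i+1} s_i = s_{i+1} s_i s_{i+1}$ corresponds precisely to the six compositions around a $3$-cube in $\two{n}$, whose coherence is the hexagon in hypothesis (2).

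With $F(u,v)$ defined, the composition maps $F(\phi_{u,v}, \phi_{v,w}) \colon F(v, w) \circ F(u, v) \to F(u, w)$ for $u \geq v \geq w$ are obtained by concatenating $\gamma_{u,v}$ and $\gamma_{v,w}$ into a chain $\gamma$ from $u$ to $w$ and then applying $\Phi_{\gamma, \gamma_{u,w}}$. The associativity square in the Burnside functor definition, applied to $u \geq v \geq w \geq x$, reduces to an instance of the well-definedness of $\Phi$: both ways around describe the canonical $2$-isomorphism between the concatenation of the three distinguished chains for the triple and $\gamma_{u,x}$. Uniqueness up to natural isomorphism is immediate since a different choice of distinguished chains $\{\gamma'_{u,v}\}$ produces a Burnside functor $F'$ related to $F$ by the family $\{\Phi_{\gamma_{u,v}, \gamma'_{u,v}}\}$, which assembles into a natural isomorphism.

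\textbf{Main obstacle.} The substantive content is the coherence argument in the second paragraph: translating Matsumoto's presentation of $\Sigma_{D(u,v)}$ into the combinatorics of chains in $\two{n}$ and matching the Coxeter relations term-by-term with hypotheses (1) and (2). One must verify that the hexagon in (2) is exactly the braid relation at the level of $2$-isomorphisms and that disjoint swaps commute strictly via the interchange law, which is routine but requires care since $\burnside$ is only a weak $2$-category.
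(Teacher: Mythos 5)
The paper does not prove this lemma; it cites it directly from \cite{stoffregen2018localization} (Lemma 3.4), so there is no internal proof to compare against.

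Your proposal is the standard argument and is essentially sound. The reduction is correct: chains $u = u_0 >_1 \cdots >_1 u_k = v$ are orderings of $D(u,v)$; the elementary-swap graph on chains is the $1$-skeleton of the permutohedron $\permutohedron_{D(u,v)}$; its $2$-cells are the squares (commuting transpositions) and hexagons (braid relations); simple connectivity of the $2$-skeleton plus coherence on each $2$-cell gives well-definedness of $\Phi_{\gamma,\gamma'}$. You correctly match hypothesis (2) to the hexagons, the interchange law to the squares, and hypothesis (1) to the bigon $s_i^2 = 1$. Two small points of precision: (a) you cite Matsumoto's theorem, which is a statement about reduced words connected by braid moves alone; what you are actually invoking is the full Coxeter presentation of $\Sigma_{D(u,v)}$ (equivalently, that the $2$-skeleton of $\permutohedron_{D(u,v)}$ is simply connected), and you do use $s_i^2=1$, so the substance is right but the attribution is off. (b) The condition $F_{u,v,v',w} = F^{-1}_{u,v',w}\circ F_{u,v,w}$ in the conclusion follows immediately from your construction but is worth spelling out: both $F_{u,v,w}$ and $F_{u,v',w}$ are $\Phi$'s into $\gamma_{u,w}$, so their composite is $\Phi_{\gamma_v,\gamma_{v'}}$, which is the given $F_{u,v,v',w}$.

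The genuine gap is in the uniqueness argument. You show that different choices of distinguished chains yield naturally isomorphic functors, but the lemma asserts uniqueness among \emph{all} extensions satisfying the compatibility $F_{u,v,v',w} = F^{-1}_{u,v',w}\circ F_{u,v,w}$, not just those obtained by your chain-composition recipe. To close this, given an arbitrary such extension $G$, one should define $\Theta_\gamma\colon F_\gamma(u,v)\to G(u,v)$ as the iterated compositor of $G$ along $\gamma$, and then verify (i) $\Theta_\gamma$ is independent of $\gamma$, which requires checking $\Theta_{\gamma'}\circ\Phi_{\gamma,\gamma'}=\Theta_\gamma$ for an elementary swap (this cancels using $F_{u_{i-1},u_i,\tilde u_i,u_{i+1}} = G^{-1}_{u_{i-1},\tilde u_i,u_{i+1}}\circ G_{u_{i-1},u_i,u_{i+1}}$ together with associativity of $G$'s compositors), and (ii) the resulting family $\{\Theta_{\gamma_{u,v}}\}$ intertwines the compositors of $F$ and $G$, hence assembles to a natural isomorphism. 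None of this is hard, but it is more than ``immediate,'' and it is the step that actually shows the construction produces the unique answer rather than merely a consistent family of answers.
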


%%% Local Variables:
%%% mode: latex
%%% TeX-master: "../main"
%%% End:

\subsection{External actions on Burnside functors}
\begin{df}(\cite[Definition 3.7]{stoffregen2018localization})
  \label{def sz}
  Fix a Burnside functor \(F \colon \mathcal \mathcal C \to \mathcal B.\) Say there exists an action of \(G\) by \(\psi\) on \(\mathcal C.\) An \emph{external action on \(F\) compatible with \(\psi\)} consists of the following data:
  \begin{enumerate}
  \item \label{sz 1-isos} a collection of \(1\)-isomorphisms
    \[\{\psi_{g, v} \colon F(v) \to F(gv) \suchthat g \in G, \enspace v \in \mathcal C\},\]
  \item \label{sz 2-isos} a collection of \(2\)-isomorphisms
    \[\psi_{g, h, v} \colon \psi_{gh, v} \to \psi_{g, hv} \circ \psi_{h, v}\]
    (note: should such exist, they are unique),
  \item \label{sz 2-composition} for every morphism \(A \colon x \to y\) in \(\mathcal C\) and every \(g \in G,\) a \(2\)-morphism
    \[\psi_{g, A} \colon \psi_{g, y} \circ F(A) \to F(gA) \circ \psi_{g, x}.\]
  \end{enumerate}
  These data are subject to the following conditions:
  \begin{enumerate}[(EB-1)]
  \item \label{sz compatibility with group action} for objects \(u, v \in \mathcal C\) and a morphism \(A \colon u \to v\), the \(2\)-morphism \(\psi_{gh, A}\) is equal to the composite
    \begin{equation*}
    \begin{aligned}
    \psi_{gh, v} \circ F(A) & \xto{\psi_{g, h, v} \circ_1 \id} \psi_{g, hv} \circ \psi_{h, v} \circ F(A) \xto{\id \circ_1 \psi_{h, A}} \psi_{g, hv} \circ F(hA) \circ \psi_{h, u} \\
    & \xto{\psi_{g, hA} \circ_1 \id} F(ghA) \circ \psi_{g, hu} \circ \psi_{h, u} \xto{\id \circ_1 \psi_{g, h, u}} F(ghA) \circ \psi_{gh, u}.
    \end{aligned}
    \end{equation*}
  \item \label{sz compatibility with composition} given a composable pair \(u \xto{A} v \xto{B} w\) in \(\mathcal C\), the \(2\)-morphisms
    \begin{align*}
    \psi_{g, w} \circ F(B) \circ F(A) & \xto{\psi_{g, B} \circ_1 \id} F(gB) \circ \psi_{g, v} \circ F(A) \xto{\id \circ_1 \psi_{g, A}} F(gB) \circ F(gA) \circ \psi_{g, u} \\
    & \xto{F_{gA, gB} \circ_1 \id} F(gB \circ gA) \circ \psi_{g, u}
    \end{align*}
    and 
    \[\psi_{g, w} \circ F(B) \circ F(A) \xto{\id \circ_1 F_{A, B}} \psi_{g, w} \circ F(B \circ A) \xto{\psi_{g, B \circ A}} F(gB \circ gA) \circ \psi_{g, u}\]
    are equal.
  \end{enumerate}
\end{df}

Suppose \(\mathcal C\) carries a \(G\)-action and \(G\) is understood to be acting trivially on \(\two{}\). Then \(\mathcal C \times \{i\}\), \(i = 1, 0\), are \(G\)-invariant subcategories of \(\mathcal C \times \two{}\). Given a Burnside functor \(J \colon \mathcal C \times \two{} \to \burnside\) with an external \(G\)-action by \(\psi\), the subfunctors \(F_i \colon \mathcal C \times \{i\} \to \burnside\) carry external actions induced from \(\psi\) by restriction. This informs the following.

\begin{df}
  \label{sz external equivalence}
  Let \(\mathcal C\) be a small category, acted upon by a group \(G\), and let \(F_1, F_0 \colon \mathcal C \to \burnside\) be functors equipped with external actions of \(G\) by \(\psi_1, \psi_0\). We say that \(F_1\) and \(F_0\) are \emph{equivariantly naturally isomorphic} if there is a natural isomorphism \(J \colon \mathcal C \times \two{} \to \burnside\) between \(F_1\) and \(F_0\), equipped with an external action of \(G\) extending \(\psi_1\) and \(\psi_0\), respectively.
\end{df}

\begin{example}
  Let \(F \colon \two{} \to \burnside\) be a Burnside functor and \(G\) a group, understood to be acting trivially on \(\two{}\); denote the single correspondence in \(F(1, 0)\) by \((A, \enspace s \colon A \to X = F(1), \enspace t \colon A \to Y = F(0))\). Then an external action of \(G\) on \(F\) consists of:
  \begin{enumerate}
  \item invertible correspondences \(\psi_{g, 1} \colon X \to X\), \(\psi_{g, 0} \colon Y \to Y\), for all \(g \in G\),
  \item isomorphisms of correspondences \(\psi_{g, h, v} \colon \psi_{gh, v} \to \psi_{g, v} \circ \psi_{h, v}\) for \(v = 1, 0\) and \(g, h \in G\), carrying no additional information beyond their existence,
  \item isomorphisms of correspondences \(\psi_{g, A} \colon \psi_{g, 0} \circ  A \to A \circ \psi_{g, 1}\) for all \(g \in G\).
  \end{enumerate}
  These are required to satisfy, for all \(g, h \in G\),
  \[\psi_{gh, A} = \psi_{g, h, 0}\circ \psi_{g, A} \circ \psi_{h, A} \circ \psi_{g, h, 1}.\]
\end{example}

By an extension of the \cref{burnside extension}, external actions on Burnside functors from the cube are determined by lower-dimensional data.
\begin{lemma}(\cite[Lemma 3.10]{stoffregen2018localization})
  \label{external burnside simplification}
  Consider the cyclic action of \(\mathbb Z_m\) on \((\two{n})^m\). Suppose \(F \colon (\two{n})^m \to \burnside\) is defined as in \cref{burnside extension}, and that in addition we are given:
  \begin{enumerate}[(1)]
  \item for \(v \in (\two{n})^m\), a \(1\)-isomorphism \(\psi_{g, v} \colon F(v) \to F(gv)\),
  \item for \(g, h \in G\) and \(v \in (\two{n})^m\), a \(2\)-morphism 
    \[\alpha_{g, h, v} \colon \psi_{g, h, v} \to \psi_{g, hv} \circ \psi_{h, v},\]
  \item for each \(g \in \mathbb Z_p\) and \(u \geq_1 v \in (\two{n})^m,\) a \(2\)-morphism
    \[\psi_{g, u, v} \colon \psi_{g, v} \circ F(u, v) \to F(gu, gv) \circ \psi_{g, u}.\]
  \end{enumerate}
  Suppose moreover that this data satisfies the following:
  \begin{enumerate}[(E-1)']
  \item For any \(u \geq_1 v\) and all \(g, h \in G\), 
    \[\psi_{gh, u, v} = \alpha^{-1}_{g, h, u} \circ_2 (\psi_{g, hu, hv} \circ \id) \circ_2 (\id \circ \psi_{h, u, v}) \circ_2 \alpha_{g, h, v},\]
  \item for any \(u \geq_1 v, v' \geq_1 w\) and any \(g \in G\), there is a commutativity hexagon yielding
    \[(F(gu, gv, gv', gw) \circ_2 \id) \circ (\id \circ \psi{g, u, v}) \circ (\psi_{g, v, w} \circ \id) = (\id \circ \psi_{g, u, v'}) \circ (\psi_{g, v', w} \circ \id) \circ (\id \circ F(u, v, v', w)).\]
  \end{enumerate}
  Then there exists a Burnside functor \(F \colon (\two{n})^m \to \burnside\) admitting an external \(\mathbb Z_m\)-action, uniquely up to \(\mathbb Z_m\)-equivariant isomorphism.
\end{lemma}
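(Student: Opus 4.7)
The plan is to mirror the argument of \cref{burnside extension}: extend the edge-level 2-morphisms $\psi_{g,u,v}$ to arbitrary pairs $u \geq v$ by composing along a maximal chain, then prove chain-independence using the hexagonal condition (E-2)'. Concretely, for $u \geq_k v$ and a maximal chain $u = w_0 \geq_1 \dotsb \geq_1 w_k = v$, I define
\[\psi_{g,u,v}^{(w_\bullet)} \colon \psi_{g,v} \circ F(u,v) \to F(gu,gv) \circ \psi_{g,u}\]
as the pasting of edge-level data: use the composition isomorphisms $F_{A,B}$ supplied by $F$ to rewrite $F(u,v)$ as $F(w_{k-1},w_k) \circ \dotsb \circ F(w_0, w_1)$; apply each $\psi_{g, w_{i-1}, w_i}$ in turn to transport $\psi_{g, \bullet}$ from right to left across the chain; and recompose via the $F_{A,B}$ on the $g$-translated side to land in $F(gu,gv) \circ \psi_{g,u}$.

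The heart of the argument is showing that $\psi_{g,u,v}^{(w_\bullet)}$ is independent of the chosen chain. I would attack this by induction on chain-distance: any two maximal chains from $u$ to $v$ in the subcube $[v,u] \subseteq (\two{n})^m$ are related by a sequence of elementary moves that interchange two consecutive edges flipping disjoint coordinates, and each such move traverses precisely a unit square of the type appearing in (E-2)'. Thus a single application of (E-2)', combined with the hexagonal compatibility \ref{burnside extension commutativity} governing $F$ itself, gives invariance under each elementary swap, and induction propagates this to arbitrary chain rearrangements. Having fixed $\psi_{g,u,v}$, axiom (EB-\ref{sz compatibility with composition}) is then automatic: concatenating a chain from $u$ to $v$ with one from $v$ to $w$ produces a chain from $u$ to $w$, and both sides of the axiom compute the same pasting. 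Axiom (EB-\ref{sz compatibility with group action}) reduces by the same chain-by-chain induction to its one-edge shadow, which is exactly condition (E-1)'. Uniqueness up to $\mathbb{Z}_m$-equivariant isomorphism then follows from the uniqueness in \cref{burnside extension} together with the observation that the external-action data on non-edge morphisms is forced by (EB-\ref{sz compatibility with composition}) from the edge data.

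The main obstacle will be the chain-independence step. Beyond the combinatorial input (the standard fact that the graph of maximal chains in a cube is connected under elementary swaps), one has to verify that after a single swap the $F_{A,B}$-associators on both the source side and the $g$-translated side reassemble correctly into the hypothesized (E-2)' square. This is a pasting-diagram manipulation in a weak 2-category that is mechanical once unwound, but the bookkeeping for simultaneously tracking the $F$-associators, the transported $\psi_{g,\bullet}$ factors and the $\alpha_{g,h,v}$-coherence (which enters only when later checking (EB-\ref{sz compatibility with group action})) is the subtlest part, and I would carry it out separately for a $2$-face before globalizing.
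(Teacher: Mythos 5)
The paper does not give its own proof of this lemma but cites it from \cite[Lemma 3.10]{stoffregen2018localization}, so there is no in-paper argument to compare against. Your strategy---extend the one-edge $2$-morphisms $\psi_{g,u,v}$ to arbitrary $u \geq v$ by pasting along a maximal chain, establish chain-independence via the $2$-face condition (E-2)', and then reduce (EB-\ref{sz compatibility with group action}) and (EB-\ref{sz compatibility with composition}) to their one-edge shadows (E-1)' and (E-2)'---is the natural one and parallels the proof of \cref{burnside extension} that this lemma is built to extend, so it is almost certainly the route taken in the cited source.

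Two points worth making explicit to close gaps in the sketch. First, when you invoke connectivity of maximal chains in the interval $[v,u]$ under elementary swaps, note that in a Boolean interval every pair of adjacent edge-flips changes disjoint coordinates, so only commutation moves (no braid moves) occur; this is the standard fact that reduced words for the longest element of $S_k$ in the commutation-only presentation form a single equivalence class, and it makes the combinatorial input genuinely elementary rather than an appeal to Matsumoto's theorem. Second, your reduction of (EB-\ref{sz compatibility with group action}) to (E-1)' tacitly relies on the fact that any $2$-morphism in $\burnside$ from $\psi_{g,v} \circ F(u,v)$ to $F(gu,gv)\circ \psi_{g,u}$ is uniquely determined by the underlying bijection because the $\psi_{g,\bullet}$ are invertible correspondences; this is the same observation behind the parenthetical ``should such exist, they are unique'' attached to $\alpha_{g,h,v}$ in \cref{def sz}, and you should state it, since it is what collapses the coherence checks to edge-by-edge verifications. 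You should also be a bit more careful than ``the $F$-associators reassemble correctly'': when comparing pastings differing by a single swap, the associators on the fixed portion of the chain must cancel, and this uses the cocycle condition built into the definition of a strictly unitary Burnside functor, not the hexagon \ref{burnside extension commutativity} directly (the latter enters only through the $2$-isomorphism $F_{u,v,v',w}$ that appears in (E-2)'). With these clarifications the argument is sound.
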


%%% Local Variables:
%%% mode: latex
%%% TeX-master: "../main"
%%% End:

\section{Spatial refinements}
\label{sec:sp-ref}
The aim of this chapter is to, given a Burnside functor (with external action), produce homotopy coherent diagrams with the property that in the homotopy colimit, vertices of the diagram correspond to cells and the Burnside functor describes degrees of attaching maps.
\subsection{Stars and star maps}
The constructions presented in this sections realise a version of the ``charge map'', associating to a configuration of points in \(\real^n\) a map of spheres \(S^n \to S^n\) (see \cite[Section 1]{segal-conf}). The approach taken here (after \cite{Lawson_2020}) allows for composing such maps between (wedges of) spheres along a Burnside functor, in the end furnishing a homotopy coherent diagram. This was already done equivariantly in \cite[Section 4.4]{stoffregen2018localization}, using spaces of little disks; for our purposes, a wider family of shapes must be used, contatining both disks and products of disks.

In the scope of this section, \(V\) will denote an orthogonal representation of a group \(G\).

For \(X\) a finite \(G\)-set, the \emph{configuration space} of points of \(X\) in a \(G\)-space \(Y\) is the space
\[\conf_X(Y) = \{\{p_x\}_{x \in X} \in Y^k \suchthat \forall x \neq y \in X \enspace p_x \neq p_y\}.\]
Equivalently, a configuration can be seen as an embedding \(f \colon X \to Y\), whereby \(\conf_X(Y)\) carries an action of \(G\) by 
\[(g \acts f)(x) = g \acts f(g^{-1} \acts x).\]
We aim to describe one of the possible equivariant versions of the Pontryagin-Thom collapse map, associating a map between spheres to a configuration. This entails replacing the points of a configuration by ``little stars'', as expanded upon below.

Let \(\sphere(V)\) denote the unit sphere in \(V\). Let \(f \colon \sphere(V) \to \real_+\) be a continuous map. By a \emph{star} in \(V\) we will mean the set
\[B(p, f) = p + \{\alpha \cdot v \in V \suchthat v \in V, \enspace \abs{\alpha} \leq (v)\} \subseteq V\]
for some \(f\) as above and \(p \in V\). The point \(p\) is then called the center point of \(B(p, f)\), and a star is understood to come with a distinguished center point. If \(f\) is \(G\)-invariant, we  call \(B(0, f)\) an \emph{invariant star} in \(V\). Any star is a star-shaped subset of \(V\); an invariant star in \(V\) is a \(G\)-invariant subset of \(V\), and as such becomes a \(G\)-space.

  Let \(\mathbb A \colon \two{} \to \burnside\) be a Burnside functor with external action by \(G\), subordinate to the trivial action of \(G\) on \(\two{}\). Per \cref{def musyt}, this consists of a correspondence \(X \xot{s} A \xto{t} Y\) along with actions \(\phi_{g, X} \colon X \to X\), \(\phi_{g, Y} \colon Y \to Y\) and \(\phi_{g, A} \colon A \to A\), which commute with the source- and target-maps.

  Suppose \(\{B_x\}_{x \in X}\) is a set of \(G\)-invariant stars in \(V\). We fix the radial homeomorphisms \sloppy \({\varphi_{x, y} \colon B_x \to B_y}\) for all \(x, y\); these satisfy \(\varphi_{x, z} = \varphi_{y, z} \circ \varphi_{x, y}\). The space \(B(X, V) \coloneqq \disjoint_{x \in X} B_x\) carries an action of \(G\) by \(g \acts (x, v) = (g \acts x, g \acts \phi_{x, g \acts x}(v)).\)
  The space \(\conf(\{B_x\}, s)\) is defined as the space of embeddings \(\gamma \colon A \to B(X, V)\) satisfying \(\gamma(a) \in B_{s(a)}.\) Topologically, this is the same as \(\disjoint_{x \in X} \conf_{s^{-1}(x)}(B_x).\) The space \(\conf(\{B_x\}, s)\) carries an action of \(G\) by
  \[(g \acts \gamma)(a) = g \acts \varphi_{g, g \acts x} \gamma(g^{-1} \acts a).\]

Replacing the points of \(A\) by little stars, we consider the space \(\stars(\{B_x\}, s)\) of embeddings of \(A\)-labeled stars in \(B(X, V)\), again with \(B_a \subseteq B_{s(a)}\). This is topologized as a subset of \sloppy \({\conf(\{B_x\}, s) \times \maps(X \times S(V), \real^+)}\), and again carries an action by 
  \[(g \acts \gamma)(a, v) = g \acts \varphi_{g, g \acts x} \gamma(g^{-1} \acts a, v).\]

A configuration of stars with centers \(\{p_x\}_{x \in X}\) can be deformed to one whose stars are all spheres with radius
\[\frac{1}{3}\min_{x, y \in X}(d(p_x, p_y), d(p_x, V \setminus B)).\]
This establishes a strong equivariant deformation retraction from \(\stars(\{B_x\}, s)\) to a bundle of points over \(\conf(\{B_x\}, s)\). The map is also equivariant and induces homotopy equivalences of fixed-point sets, implying the following.

\begin{lemma}
  \label{lemma: stars-connectivity}
  The spaces \(\stars(\{B_x\}, s)\) and \(\conf(\{B_x\}, s)\) are \(G\)-homotopy equivalent.
\end{lemma}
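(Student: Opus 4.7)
The plan is to construct an equivariant strong deformation retraction from $\stars(\{B_x\}, s)$ onto the subspace $\stars_0$ of configurations in which every placed star is a round ball of a canonical radius. The center-point projection $\stars_0 \to \conf(\{B_x\}, s)$ will be a $G$-equivariant homeomorphism, immediately giving the asserted $G$-homotopy equivalence without needing to pass through fixed-point comparisons.

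First I would introduce the continuous, $G$-invariant function
\[
r(\gamma) = \tfrac{1}{3}\min\Bigl(\min_{a \neq b} d(p_a, p_b), \; \min_{a} d(p_a, V \setminus B_{s(a)})\Bigr),
\]
where $p_a$ denotes the center of the $a$-th star of $\gamma$; continuity reduces to continuity of the center projection $\pi$, and $G$-invariance follows from the fact that $G$ acts by isometries of $V$ and preserves each $B_x$. Writing each star as $(a,v) \mapsto p_a + f_a(v)\cdot v$ for a radial shape function $f_a \colon \sphere(V) \to \real_+$, I would then build the homotopy in two stages. Stage one linearly interpolates $f_a$ to $\min(f_a, r(\gamma))$, which only shrinks each star and hence trivially preserves pairwise disjointness and containment in $B_{s(a)}$. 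Stage two linearly interpolates $\min(f_a, r(\gamma))$ to the constant function $r(\gamma)$; throughout, every star is contained in the round ball of radius $r(\gamma)$ around its center, and these round balls are pairwise disjoint and contained in the respective $B_{s(a)}$ by the defining property of $r(\gamma)$.

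The main technical subtlety is precisely that naive linear interpolation between the original shape and the target round ball need not preserve disjointness, since this condition is not convex in the space of shape functions; splitting the deformation into a shrinking stage followed by a within-ball expansion stage is exactly what bypasses this obstruction. Once that is in hand, equivariance of both stages is immediate from $G$-invariance of $r$ together with $G$ acting on shape functions by precomposition with an isometry, while continuity is clear from continuity of linear interpolation and of $r(\gamma)$ in $\gamma$. The resulting equivariant strong deformation retract $\stars_0$ is then identified with $\conf(\{B_x\}, s)$ via the center-point map, with continuous $G$-equivariant inverse $\{p_a\} \mapsto ((a,v) \mapsto p_a + r(\{p_a\}) v)$, completing the proof.
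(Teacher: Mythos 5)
Your proposal is correct and follows essentially the same route as the paper's brief proof: deformation-retract $\stars(\{B_x\}, s)$ onto the configurations of round balls of the canonical radius $r$, which are identified with $\conf(\{B_x\}, s)$ via the center-point map. Your two-stage decomposition of the homotopy (shrink to $\min(f_a,r)$, then inflate to the constant $r$) is a careful elaboration of a step the paper treats as obvious, and it correctly circumvents the non-convexity of the disjointness condition in the space of shape functions.
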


Note that the target map of the correspondence played no role in the definition of \(\stars(\{B_x\}_X, s)\); rather, it becomes relevant in the definition of the associated map of spheres. Let \(S^V\) denote the one-point compactification \(V \cup \{\infty\}\), considered as a \(G\)-space.
\begin{df}(\cite[Definition 5.8]{Lawson_2020})
  Let \(\mathbb A = (A, s \colon A \to X, t \colon A \to Y)\) be a correspondence and 
  \[e = \{B_a \subseteq B_{s(a)} \suchthat a \in A\} \in \stars(\{B_x\}, s)\]
  a collection of substars in \(V\).
  Define a map \(\Phi(e, \mathbb A) \colon \vee_{x \in X} S^V \to \vee_{y \in Y} S^V\) on the summand \(S_x^V\) corresponding to \(x \in X\) to be the map of spheres
  \[\restrict{\Phi(e, \mathbb A)}{S_x^V} \colon S_x^V = B_x/{\partial B_x} \to B_x / (B_x \setminus (\bigcup_{\substack{a \in A \\ s(a) = x}} \interior{B}_a)) = \bigvee_{\substack{a \in A \\ s(a) = x}} B_a/{\partial B_a} = \bigvee_{\substack{a \in A \\ s(a) = x}} S_a^V \to \bigvee_{y \in Y} S_y^V,\]
  the last map sending \(S_a^V\) by the identity map to \(S_{t(a)}^V.\)
  Any map \(\vee_{x \in X} S^V \to \vee_{y \in Y} S^V\) that is of the form \(\Phi(e, \mathbb A)\) for some \(e \in \stars(\{B_x\}, s)\), is called a (\(V\)-dimensional) \emph{star map refining the correspondence \(\mathbb A\)}.
\end{df}

We end this section by stating some facts about star maps without proof.

\begin{lemma}
  The map 
  \[\Phi(-, A) \colon \stars(\{B_x\}, s) \to \maps(\vee_{x \in X} S^k, \vee_{y \in Y} S^V)\]
 is continuous. 
\end{lemma}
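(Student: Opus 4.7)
The plan is to reduce the statement to continuity of the associated evaluation map via the exponential law, and then verify this by a local analysis.

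First, I would observe that $\bigvee_{x \in X} S^V$ is a finite wedge of spheres, hence locally compact Hausdorff. By the exponential law for compactly generated spaces, continuity of
\[\Phi(-, \mathbb{A}) \colon \stars(\{B_x\}, s) \to \maps\Bigl(\bigvee_{x \in X} S^V, \bigvee_{y \in Y} S^V\Bigr)\]
into the mapping space equipped with the compact-open topology is equivalent to continuity of the adjoint evaluation map
\[\mathrm{ev} \colon \stars(\{B_x\}, s) \times \bigvee_{x \in X} S^V \to \bigvee_{y \in Y} S^V, \qquad (e, p) \mapsto \Phi(e, \mathbb{A})(p).\]

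I would then verify continuity of $\mathrm{ev}$ at an arbitrary point $(e_0, p_0)$, where $e_0 = \{B_a^0\}_{a \in A}$. By definition, $\stars(\{B_x\}, s)$ sits inside $\conf(\{B_x\}, s) \times \maps(X \times S(V), \real^+)$, so small perturbations of $e_0$ produce substars whose centers and radial profile functions are close to those of the $B_a^0$; in particular, the radial parameterizations $B_a \to S^V_a$ sending $\partial B_a \mapsto \infty$ depend continuously on $e$. If $p_0 \in \interior{B_a^0}$ for the (unique) $a \in A$ with $s(a) = x$ and $p_0$ lying in the summand $B_x$, then a small neighborhood of $(e_0, p_0)$ consists of pairs $(e, p)$ with $p$ in the interior of the perturbed substar, and $\mathrm{ev}$ there is the composition of the continuously varying radial homeomorphism with inclusion into the wedge summand $S^V_{t(a)}$, hence continuous. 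Otherwise $p_0$ lies outside every open substar (possibly on some $\partial B_a^0$ or at the wedge basepoint), so $\mathrm{ev}(e_0, p_0)$ is the basepoint; any nearby pair $(e, p)$ either has $p$ outside all substars (again mapping to the basepoint) or only slightly inside some $B_a$, in which case the radial parameterization places $p$ close to $\partial B_a$, and its image in $S^V_{t(a)}$ is correspondingly close to $\infty$, i.e.\ close to the basepoint of the wedge.

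The main subtlety is this last ``boundary'' case: one must confirm that the Pontryagin--Thom collapse blends continuously across $\partial B_a$ as the configuration varies. This reduces to the observation that for each fixed $e$ the collapse $B_x \to B_x / (B_x \setminus \bigsetsum_{a \colon s(a) = x} \interior{B_a})$ is continuous, and that the boundaries $\partial B_a$ depend continuously on $e$, both of which are immediate from the topology placed on $\stars(\{B_x\}, s)$. Assembling these local checks establishes continuity of $\mathrm{ev}$, and the exponential law delivers the desired continuity of $\Phi(-, \mathbb{A})$.
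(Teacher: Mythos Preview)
The paper does not actually prove this lemma: it is listed among the ``facts about star maps'' that the paper explicitly states without proof. Your argument via the exponential law and a pointwise analysis of the adjoint evaluation map is the standard route and is correct; the only thing to be mildly careful about is that the radial homeomorphism $B_a/\partial B_a \to S^V$ you invoke really does vary continuously in the parameters $(p_a, f_a) \in V \times \maps(S(V), \real^+)$ defining the star, which follows since it is given by an explicit formula in those parameters.
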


\begin{lemma}(\cite[Lemma 4.12]{stoffregen2018localization})
  The star map \(\Phi(e, \mathbb A)\) associated to an element \(e \in \stars(\{B_x\}, s)^H\) fixed by subgroup \(H \subseteq G\), is \(H\)-equivariant.
\end{lemma}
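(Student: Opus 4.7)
The plan is to unpack the condition that \(e\) is \(H\)-fixed and then verify equivariance of \(\Phi(e, \mathbb A)\) one wedge summand at a time. First I would translate \(e \in \stars(\{B_x\}, s)^H\) into concrete content: by the defining formula for the \(G\)-action on configurations, \((h \cdot e)(a) = h \cdot \varphi_{s(h^{-1} a), s(a)}(e(h^{-1} a))\), together with the analogous formula for the radial functions attached to the substars, being \(H\)-fixed says precisely that the indexed family of substars \(\{B_a\}_{a \in A}\) inside \(B(X, V)\) is carried to itself by the \(H\)-action on \(A\). Explicitly, for every \(h \in H\) and every \(a \in A\), the substar \(B_a \subseteq B_{s(a)}\) is mapped onto \(B_{h \cdot a} \subseteq B_{s(h \cdot a)}\) by the composition of the \(G\)-action on \(V\) with the radial homeomorphism \(\varphi_{s(a), s(h \cdot a)}\).

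Next I would verify equivariance of each of the three pieces in the definition of \(\Phi(e, \mathbb A)\). On the \(x\)-summand \(S_x^V\), the element \(h \in H\) acts as the sphere-level map \(S_x^V \to S_{h \cdot x}^V\) induced by \(v \mapsto h \cdot \varphi_{x, h \cdot x}(v)\). The first piece of \(\Phi\) is the collapse map to \(B_{h \cdot x}/(B_{h \cdot x} \setminus \bigcup_{s(a') = h \cdot x} \interior{B}_{a'})\); by the previous step, the subset being collapsed on the target is exactly the image of the subset being collapsed on \(S_x^V\), so the collapse commutes with \(h\). The subsequent splitting into the wedge \(\bigvee_{s(a) = x} S_a^V\) is again equivariant, with the \(a\)-summand sent to the \(h \cdot a\)-summand. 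Finally, the identification \(S_a^V \to S_{t(a)}^V\) is equivariant because \(t\) is \(G\)-equivariant (\(t(h \cdot a) = h \cdot t(a)\)) and because the canonical radial identifications \(B/\partial B \cong S^V\) used to define the "identity" map are compatible with the \(G\)-action on \(V\). Composing the three pieces yields \(\Phi(e, \mathbb A) \circ h = h \circ \Phi(e, \mathbb A)\) on each summand.

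The hard part is primarily bookkeeping: tracking the interplay between the \(G\)-action on \(V\), the radial homeomorphisms \(\varphi_{x, y}\), and the identifications \(B/\partial B \cong S^V\) under which the star map is defined. A possible simplification is to invoke \cref{lemma: stars-connectivity} to retract to the case in which every substar is a round ball centered at a marked point. In that setting all three compositions reduce to restrictions of the ambient \(G\)-action on \(V\) and the identifications \(B/\partial B \cong S^V\) are the standard ones, so \(H\)-equivariance follows by direct inspection; the general case then follows because \(\Phi(-, \mathbb A)\) is continuous in the configuration (by the preceding lemma) and the retraction preserves the subspace of \(H\)-fixed configurations.
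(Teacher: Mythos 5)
The paper states this lemma without proof, citing \cite[Lemma 4.12]{stoffregen2018localization}, so there is no in-paper argument to compare against. Your main argument (unpacking what it means for $e$ to be $H$-fixed and then checking $H$-equivariance of each of the three constituent maps of $\Phi(e, \mathbb A)$ --- the collapse to the union of substar interiors, the wedge decomposition, and the fold along $t$) is the right approach and is correct: the $H$-fixedness of $e$ exactly guarantees that the substar $B_a$ and its image $B_{h \cdot a}$ under the ambient $h$-action (combined with the radial homeomorphism $\varphi_{s(a), s(h\cdot a)}$) agree as subsets, which is what makes the collapse commute with $h$; the equivariance of $s$ and $t$ handles the indexing. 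A slicker packaging of the same idea is to show once that $\Phi(-, \mathbb A)$ is itself $G$-equivariant for the conjugation action on $\maps(\vee S^V, \vee S^V)$, so that $H$-fixed configurations are sent to $H$-fixed (i.e.\ $H$-equivariant) maps.

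However, the closing ``simplification'' is not sound as stated. Retracting $e$ to a round-ball configuration $e'$ via \cref{lemma: stars-connectivity} and checking $\Phi(e', \mathbb A)$ is $H$-equivariant does not transfer to $\Phi(e, \mathbb A)$ by continuity: the set of $e$ with $\Phi(e, \mathbb A)$ $H$-equivariant is closed (by continuity of $\Phi$) but there is no reason for it to be open, and a path ending in a closed set need not start there. Continuity plus an $H$-equivariant deformation retraction shows $\Phi(e, \mathbb A)$ and $\Phi(e', \mathbb A)$ are homotopic, not equal, so the ``general case'' does not ``follow'' this way. You would need an argument like the one in your first two paragraphs (or the global equivariance of $\Phi(-, \mathbb A)$) anyway; drop the suggested shortcut.
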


\begin{lemma}
  \label{star suspension}
  Given a correspondence \(\mathbb A \colon X \to Y\) as above, \(G\)-representations \(V\) and \(W\), a family of invariant stars in \(\{B_x\}_{x \in X} \subseteq V\) and an invariant star \(B' \subseteq W\), consider the map \(\psi_{B'} \colon \stars(\{B_x\}, s) \to \stars(\{B_x \times B'\}_X, s)\) obtained by taking products of all stars with \(B'\). Then for any \(e \in \stars(\{B_x\}, s)\), the assignment
  \[\Phi(e, \mathbb A) \circ \psi_{B'} \in \maps(\vee_{x \in X} S^{V \oplus W}, \vee_{y \in Y} S^{V \oplus W})\]
  is a \((V \oplus W)\)-dimensional star map refining the correspondence \(\mathbb A\).
\end{lemma}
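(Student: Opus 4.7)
The plan is to reduce the statement to the verification that the enlarged collection $\psi_{B'}(e) = \{B_a \times B'\}_{a \in A}$ is a legitimate element of $\stars(\{B_x \times B'\}_{x \in X}, s)$; once this is established, the composition $\Phi(e, \mathbb A) \circ \psi_{B'}$ is, by unwinding the definition of the star-map assignment, equal to $\Phi(\psi_{B'}(e), \mathbb A)$, which is a $(V \oplus W)$-dimensional star map refining $\mathbb A$ tautologically.

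The one substantive point is that a product of two stars is again a star in the sense of the paper. Writing $B_a = p_a + \{\alpha v : v \in \sphere(V),\ |\alpha| \leq f_a(v)\}$ and $B' = q + \{\alpha w : w \in \sphere(W),\ |\alpha| \leq h(w)\}$, the set $B_a \times B'$ is star-shaped about $(p_a, q)$, and a short calculation shows its radial function on $\sphere(V \oplus W)$ is
\[
(v', w') \mapsto \min\bigl(f_a(v'/|v'|)/|v'|,\ h(w'/|w'|)/|w'|\bigr),
\]
with each factor read as $+\infty$ on the locus where its component vanishes. The key technical check is continuity at points where $v' = 0$ or $w' = 0$; this follows because the factor with a shrinking denominator tends to $+\infty$, so the minimum is locally governed by the other factor. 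Positivity is immediate, and $G$-invariance of $B_a \times B'$ under the diagonal action is inherited from the factors.

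The remaining conditions are formal: $B_a \times B' \subseteq B_{s(a)} \times B'$ follows from $B_a \subseteq B_{s(a)}$, and disjointness over a fixed source is $(B_a \times B') \cap (B_{a'} \times B') = (B_a \cap B_{a'}) \times B' = \emptyset$ for $a \neq a'$ with $s(a) = s(a')$. Hence $\psi_{B'}(e) \in \stars(\{B_x \times B'\}, s)$, and the conclusion follows directly from the definition of star map.

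I expect no serious obstacle; the only delicate point is the continuity check for the product radial function at the boundary loci, handled by the asymptotic observation above. A cleaner alternative, which avoids explicit manipulation of radial functions, is to identify $\Phi(e, \mathbb A) \circ \psi_{B'}$ with the smash product $\Phi(e, \mathbb A) \wedge \id_{S^W}$ under the canonical identification $S^{V \oplus W} \cong S^V \wedge S^W$, and then recognise this smash product as the Pontryagin-Thom collapse associated to the enlarged configuration $\{B_a \times B'\}$ by inspecting what each map does on each factor.
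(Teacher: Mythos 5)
The paper does not supply a proof of this lemma---it appears in a block introduced by the phrase ``We end this section by stating some facts about star maps without proof''---so there is no argument in the paper to compare against. Your proof is correct. The substantive content is indeed that a product of stars is a star, and your radial-function formula together with the limiting observation at the loci $v' = 0$ and $w' = 0$ is right: each factor of the minimum is a continuous $(0, \infty]$-valued function, and the two never blow up simultaneously on the unit sphere of $V \oplus W$, so their minimum is a continuous, finite, positive radial function. One small slip: the $B_a$ are substars with arbitrary centers and are not themselves $G$-invariant, so the phrase ``$G$-invariance of $B_a \times B'$'' overreaches; the relevant invariance is that of the ambient stars $B_x \times B'$, which does hold since $B_x$ and $B'$ are invariant. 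This does not affect the argument. Your closing smash-product remark correctly identifies the content behind the lemma's abusive notation: the $S^W$-suspension of $\Phi(e, \mathbb A)$ equals $\Phi(\psi_{B'}(e), \mathbb A)$ because the interior of $B_a \times B'$ is the product of the interiors, which is exactly what makes the smash with $\id_{S^W}$ agree with the collapse map for the product configuration.
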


\begin{lemma}(\cite[Lemma 4.8]{stoffregen2018localization})
  Let \(X \xto{\mathbb A} Y \xto{\mathbb B} Z\) be finite correspondences. Given \(e \in \stars(\{B_x\}, s_A)\) and \(f \in \stars(\{B_y\}, s_B)\), there is a unique arrangement of stars
  \[f \circ e \in \stars(\{B_x\}, s_{B \circ A}, \mathbb B \circ \mathbb A, X) = \stars(\{B_x\}, s_{\mathbb B \circ \mathbb A})\]
  such that there is an equality of star maps
  \(\Phi(f \circ e, \mathbb B \circ \mathbb A) = \Phi(f, \mathbb B) \circ \Phi(e, \mathbb A).\)
  Moreover, the assignment
  \[\circ \colon \stars(\{B_y\}, s_B) \times \stars(\{B_x\}, s_A) \to \stars(\{B_x\}, s_{B \circ A}, B \circ A, X)\]
  is continuous and surjective.
\end{lemma}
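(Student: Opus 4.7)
The plan is to construct $f \circ e$ as the configuration of sub-stars in $B(X, V)$ obtained by pulling back the little stars of $f$ through the canonical radial homeomorphisms used by $e$, and then to verify by a direct collapse-map computation that the resulting arrangement realizes the composite star map. Concretely, for $(b, a) \in B \times_Y A$ with $y = t_A(a) = s_B(b)$ and $x = s_A(a)$, let $\rho_a \colon B_a \to B_y$ denote the canonical radial homeomorphism that appears in the definition of $\Phi(e, \mathbb A)$ on the sheet indexed by $a$, and set
\[B_{(b, a)} := \rho_a^{-1}(B_b) \subseteq B_a \subseteq B_x.\]
Pairwise disjointness of $\{B_a\}_{a \in A}$ and $\{B_b\}_{b \in B}$ forces pairwise disjointness of $\{B_{(b, a)}\}$, so this is a valid element of $\stars(\{B_x\}, s_{\mathbb B \circ \mathbb A})$.

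To verify $\Phi(f \circ e, \mathbb B \circ \mathbb A) = \Phi(f, \mathbb B) \circ \Phi(e, \mathbb A)$, I would unwind both sides on each summand $S_x^V$. The right-hand side collapses $B_x \setminus \bigsqcup_a B_a$ to the basepoint, transports each $B_a$ onto $B_y$ via $\rho_a$, collapses $B_y \setminus \bigsqcup_b B_b$, and transports each $B_b$ onto $B_z$ (with $z = t_B(b)$) via its canonical radial homeomorphism $\rho_b$. Since compositions of radial homeomorphisms between star-shaped regions are again radial, the restriction $\rho_b \circ \rho_a|_{B_{(b, a)}}$ coincides with the canonical radial identification $B_{(b, a)} \to B_z$, and the preimage structures match what $\Phi(f \circ e, \mathbb B \circ \mathbb A)$ prescribes. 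Uniqueness of the refining configuration then follows by a Pontryagin-Thom-style recovery: given any refinement, the little star indexed by $(b, a)$ is the closure of the component of the preimage of $B_z^\circ$ in $B_x$ lying on the sheet labelled by $(b, a)$, and so is determined by the map.

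Continuity of $(f, e) \mapsto f \circ e$ reduces to the continuous dependence of the radial pullback $B_b \mapsto \rho_a^{-1}(B_b)$ on the centers and shape functions of $B_a$ and $B_b$, which is built into the subspace topology on $\stars$ as defined above. The main obstacle is surjectivity. Given a target configuration $g = \{C_{(b, a)}\} \in \stars(\{B_x\}, s_{\mathbb B \circ \mathbb A})$, I would first freely choose, for each $y \in Y$, sub-stars $B_b \subseteq B_y$ for $b \in s_B^{-1}(y)$; then for each $a \in A$ with $t_A(a) = y$ and $s_A(a) = x$, I would select the center and radial shape function of $B_a \subseteq B_x$ so that $\rho_a$ carries each $C_{(b, a)}$ onto the corresponding $B_b$. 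The one-per-direction freedom in the shape function of $B_a$ suffices to realize any prescribed finite family of pullbacks of disjoint sub-stars, and disjointness of the resulting $\{B_a\}_{a \in A}$ is arranged by concentrating each $B_a$ about its associated cluster $\bigsqcup_{b \colon s_B(b) = t_A(a)} C_{(b, a)}$. This is the technically delicate step, but it only requires inverse-function-type reasoning applied to the radial parametrization of stars.
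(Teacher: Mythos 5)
Your construction of $f \circ e$ is exactly the one in the paper: for $(b,a) \in B \times_Y A$, take the substar of $B_a \subseteq B_{s_A(a)}$ obtained by pulling back $B_b \subseteq B_{t_A(a)}$ through the radial identification $B_a \cong B_{t_A(a)}$. The paper's own proof is just this one sentence and stops there, so the bulk of your argument (verification of the star-map identity, uniqueness, continuity, surjectivity) goes beyond what the paper records, but it is the verification the paper implicitly delegates to \cite{stoffregen2018localization}.

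Two places deserve scrutiny. First, you assert that ``compositions of radial homeomorphisms between star-shaped regions are again radial,'' and conclude that $\rho_b \circ \rho_a|_{B_{(b,a)}}$ coincides with the canonical identification $B_{(b,a)} \to B_z$. But $\rho_a$ is radial about the center $p_a$ of $B_a$, while $B_{(b,a)} = \rho_a^{-1}(B_b)$ has center $\rho_a^{-1}(p_b)$, which is not $p_a$ in general; so $\rho_a|_{B_{(b,a)}}$ does not send rays from the center of $B_{(b,a)}$ to rays from $p_b$, and the composite is not a priori the radial identification determined by $B_{(b,a)}$'s center and shape function. This has to be resolved either by specifying what ``center'' the quotient $B_{(b,a)}/\partial B_{(b,a)} \cong S^V$ uses (taking it to be $\rho_a^{-1}(p_b)$ and checking the shape function pulls back correctly), or by remembering that an element of $\stars$ implicitly carries the identification as data. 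As written, this step is an assertion, not a proof.

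Second, your surjectivity argument glosses over the compatibility constraint when $t_A^{-1}(y)$ has more than one element. If $a, a' \in A$ both have target $y$, then each $b \in s_B^{-1}(y)$ must satisfy simultaneously $\rho_a(C_{(b,a)}) = B_b = \rho_{a'}(C_{(b,a')})$; you cannot ``freely choose'' the $B_b$'s and then solve for $B_a$, $B_{a'}$ one at a time, because a single family $\{B_b\}_{s_B(b)=y}$ has to be hit by all the $\rho_a$'s at once. Resolving this requires, for instance, centering $B_a$ at the appropriate point of the cluster $\bigsqcup_b C_{(b,a)}$ and adjusting shape functions in a coordinated way across all $a$ over $y$, which is more delicate than ``inverse-function-type reasoning applied to the radial parametrization.'' Since the paper's own proof is silent on surjectivity entirely, this is not a divergence from the paper, but the claim as you have phrased it is not yet established.
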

\begin{proof}
  For \((b, a) \in B \times_Y A\), \(b \in B\), \(a \in A\), consider the corresponding stars \(e_b \colon B_b \to B_{s_B(b)}\), \(e_a \colon B_a = B_{s_B(b)}\to B_{s_A(a)}\). The substar \(e_{b, a} \colon B_b \to B_{s_A(a)} = B_{s_{B \circ A}(b)}\) is \(e_a \circ e_b.\)
\end{proof}

%%% Local Variables:
%%% mode: latex
%%% TeX-master: "../main"
%%% End:

\subsection{Equivariant spatial refinements}

\begin{df}
  For any finite-dimensional real \(G\)-representation \(V\), consider the set \(\stars(V)\) of stars in \(V\) invariant under the \(G\)-action; write 
  \[\stars(G) = \coprod_{V \in \reps(G)} \stars(V).\]
\end{df}

\begin{lemma}(\cite[Lemma 4.11]{stoffregen2018localization})
  \label{many H-equivariant stars}
  Let \(V\) be a \(G\)-representation, \(s \colon A \to X\) a function, and \(H \subseteq G\) a subgroup. Then, for any integer \(N > 0\), there exists a finite-dimensional representation \(V_N\) such that if \(V\) is another finite-dimensional representation admitting an embedding \(V_N \into V\), then for any family \(\{B_x\}_{x \in X}\) of stars in \(V\), the fixed-point set of \(\stars(\{B_x\}, s)\) under the action of \(H\), denoted by \(\stars(\{B_x\}, s)^H\), is \(N\)-connected (and nonempty).
  %Moreover, for any \(e \in E(\{B_x\}, s)^H\), the induced map \(\Phi(e, A)\) is \(H\)-equivariant.
\end{lemma}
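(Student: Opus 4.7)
The plan is to reduce to a connectivity statement about equivariant configuration spaces, whose control comes from the dimensions of the fixed-point subspaces $V^K$ for subgroups $K \subseteq H$.

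First, \Cref{lemma: stars-connectivity} produces a $G$-equivariant deformation retraction $\stars(\{B_x\}, s) \to \conf(\{B_x\}, s)$, which induces a homotopy equivalence on $H$-fixed points; so it suffices to prove that $\conf(\{B_x\}, s)^H$ is $N$-connected and nonempty. This fixed-point set decomposes as $\prod_{x \in X} \conf_{s^{-1}(x)}(B_x)^H$, so one works one component at a time.

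Second, I would describe $H$-fixed configurations orbit by orbit. An $H$-equivariant injection $\gamma \colon s^{-1}(x) \hookrightarrow B_x$ is specified by choosing, for each $H$-orbit $[a] \subseteq s^{-1}(x)$ with stabilizer $H_a$, a point $\gamma(a) \in B_x \cap V^{H_a}$ whose stabilizer in $H$ is exactly $H_a$ (so that the full orbit $H \cdot \gamma(a)$ has the correct cardinality $\abs{H/H_a}$), subject to the requirement that orbits from distinct representatives be disjoint. This exhibits $\conf_{s^{-1}(x)}(B_x)^H$ as an iterated configuration space of points in the open sets $V^{H_a} \setminus \bigcup_{K \supsetneq H_a} V^K$.

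Third, the classical fact that the complement of a finite union of affine subspaces of codimension $\geq c$ in an open star-shaped subset of $\real^d$ is $(c-2)$-connected, applied inductively as orbits are placed, shows that $\conf(\{B_x\}, s)^H$ is $N$-connected once $\dim V^{H_a} - \max_{K \supsetneq H_a} \dim V^K \geq N + 2 + \abs{A}$ for every stabilizer $H_a$ that arises. To obtain $V_N$, I would take a direct sum of sufficiently many copies of the permutation representations $\real[G/K]$ as $K$ ranges over the finitely many subgroups of $H$, so that these codimension bounds hold in $V_N$; any $V$ containing $V_N$ inherits the same estimates on $\dim V^K$.

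The main obstacle is the codimension bookkeeping: one must ensure both that $\gamma(a)$ lies in $V^{H_a}$ but misses every strictly larger fixed locus (so that its orbit has exactly $\abs{H/H_a}$ elements) and that its $H$-orbit avoids those of previously-placed orbits, while preserving $N$-connectedness at each inductive step. The choice of $V_N$ is what makes all of these codimension estimates uniform and reduces the argument to the standard connectivity of complements of affine subspaces.
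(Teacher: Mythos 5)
The paper presents this lemma purely as a citation of \cite[Lemma 4.11]{stoffregen2018localization} and gives no proof of its own, so there is no in-paper argument to compare against; what follows assesses your sketch on its own terms.

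Your overall plan --- retract to configuration spaces via \cref{lemma: stars-connectivity}, describe $H$-fixed configurations orbit by orbit, reduce $N$-connectivity to the classical connectivity of complements of high-codimension affine subspaces in a star-shaped open set, and build $V_N$ from many copies of the permutation representations $\real[G/K]$ --- is the standard argument and is surely close in spirit to Stoffregen--Zhang's. There is, however, a genuine error in the opening reduction: you assert $\conf(\{B_x\},s)^H = \prod_{x\in X}\conf_{s^{-1}(x)}(B_x)^H$, but when $H$ acts nontrivially on $X$ (which the lemma allows) the right-hand side is not even well-formed, since $H$ does not act on $\conf_{s^{-1}(x)}(B_x)$ unless $x$ is $H$-fixed. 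The correct identification is $\conf(\{B_x\},s)^H \cong \prod_{[x]\in X/H}\conf_{s^{-1}(x)}(B_x)^{H_x}$, where $H_x$ is the isotropy of a chosen orbit representative; equivalently, one should decompose $A$ itself into $H$-orbits, each with its own stabilizer $H_a\subseteq H_{s(a)}$, which is what your second paragraph in effect does. With that correction the orbit-by-orbit argument goes through. The $\abs{A}$ term in your codimension bound is superfluous: in the Fadell--Neuwirth-type tower, previously placed orbits contribute only finitely many points to avoid (codimension $\dim V^{H_a}$), so requiring $\dim V^{H_a}\geq N+2$ and $\dim V^{H_a}-\dim V^K\geq N+2$ for all stabilizers $K\supsetneq H_a$ that arise already makes every fiber, and hence the total space, $N$-connected. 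Your choice of $V_N$ does guarantee these gaps, since for $L_1\subsetneq L_2\subseteq G$ one has $\abs{L_2\backslash G/L_1}<\abs{L_1\backslash G/L_1}$, so the differences $\dim V_N^{L_1}-\dim V_N^{L_2}$ grow linearly with the multiplicity of $\real[G/L_1]$ in $V_N$.
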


\begin{df}
  Let \(\mathcal C\) be a poset, \(F \colon \mathcal C \to \burnside\) a Burnside functor, \(\tilde{F} \colon \mathcal C \to \tops_*\) a homotopy coherent diagram, and \(V\) an inner product space. We say that \(\tilde{F}\) is a \emph{spatial refinement of \(F\) modeled on \(V\)} if its components are of the form:
  \begin{itemize}
    \item for \(u \in \ob(\mathcal C)\), there are stars \(\{B_x\}_{x \in F(u)} \subseteq \stars(V)\) with
      \[\tilde{F}(u) = \vee_{x \in F(u)} S^V = \disjoint_{x \in F(u)} B_x/{\partial},\]
    \item for \(u, v \in \ob(\mathcal C)\), the component
      \[\tilde{F}(u, v) \colon \free{\mathcal C}(u, v) \to \tops_*(\vee_{x \in F(u)} S^V, \vee_{x \in F(v)} S^V)\]
      equals \(\Phi(-, F(u, v)) \circ \tilde{F}_{u, v}\), where 
      \(\tilde{F}_{u, v} \colon \free{\mathcal C}(u, v) \to \stars(\{B_x\}_{x \in F(u)}, s_{F(u, v)})\)
      is a continuous family of star arrangements.
  \end{itemize}
\end{df}

\begin{df}
  \label{g action on sets of boxes}
  Let \(F \colon \mathcal C \to \burnside\) be a Burnside functor equipped with an external action of \(G\) by \(\psi\). Let \(V\) be a \(G\)-representation and \(\tilde{F} \colon \mathcal C \to \tops_*\) a spatial refinement modeled on \(V\). 
  The spatial refinement \(\tilde{F}\) of \(F\) is called a \emph{\(G\)-coherent refinement modeled on \(V\)} if for all \(g \in G\), \(u, v \in \ob(\mathcal C)\), \(x \in F(u)\), \(t \in \free \mathcal C(u, v)\) and \(p \in B_x/{\partial B_x}\) the equality
  \begin{equation}
  \label{g-coherent refinement equation}
  g \acts \tilde{F}(u, v)(t)(p) = \tilde{F}(g \acts u, g \acts v)(t)(g \acts p)
  \end{equation}
  holds (here, \(g \acts p \in B_{g \acts x}/{\partial B_{g \acts x}}\)).
\end{df}

\begin{prop}(\cite[Proposition 5.11]{stoffregen2018localization})
  \label{e-spatial refinements exist}
  Let \(\mathcal C\) be a small category of length \(n\), equipped with a \(G\)-action. Let \(F \colon \mathcal C \to \burnside\) be a Burnside functor, equipped with an external action of \(G\).
  \begin{enumerate}
  \item \label{e refinements existence item} There exists a finite-dimensional \(G\)-representation \(W\) such that for all finite-dimensional \(G\)-representations \(V\) which admit an embedding of \(W\), there exists a \(G\)-coherent refinement of \(F\) modeled on \(V\).
  \item \label{e refinements equivalence item} There exists a finite-dimensional \(G\)-representation \(W\) such that for all finite-dimensional \(G\)-representations \(V\) which admit an embedding of \(W\), any two \(G\)-coherent refinements of \(F\) modeled on \(V\) are weakly equivalent.
  \item If \(\tilde{F}_V\) is a \(G\)-coherent refinement of \(F\) modeled on \(V\), then for any \(G\)-representation \(V'\), the result of suspending each \(\tilde{F}_V(u)\) and \(\tilde{F}_V(f_n, \dotsc, f_1)\) by \(V'\) gives a \(G\)-coherent spatial refinement of \(F\) modeled on \(V \oplus V'\).
  \end{enumerate}
\end{prop}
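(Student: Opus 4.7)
The plan is to construct $\tilde F$ inductively on the length of chains of non-identity morphisms in $\mathcal C$, using equivariant obstruction theory anchored on the connectivity result for star-configuration spaces (\cref{many H-equivariant stars}).

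For (1), once stars $\{B_x\}_{x \in F(u)}$ are fixed for every object $u$, a $G$-coherent refinement amounts to specifying, for each chain $\sigma = (u_0 \xto{f_1} \dotsb \xto{f_k} u_k)$ of non-identity morphisms, a continuous map $\tilde F_\sigma \colon [0,1]^{k-1} \to \stars(\{B_x\}_{x \in F(u_0)}, s_{F(u_0, u_k)})$. The face $t_i = 0$ is prescribed by composing already-constructed star arrangements on the two subchains, the face $t_i = 1$ is the arrangement on the shorter chain obtained by replacing $f_{i+1}, f_i$ with $f_{i+1} \circ f_i$, and \cref{g-coherent refinement equation} forces $\tilde F_{g \acts \sigma} = g \acts \tilde F_\sigma$. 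I would order the $G$-orbits of chains by length and, for each orbit, pick a representative $\sigma$ of isotropy $H = G_\sigma$. The boundary data on $\partial [0,1]^{k-1}$ is then determined by induction, and by the compatibility axioms \cref{sz compatibility with group action}, \cref{sz compatibility with composition} of the external action on $F$, this boundary map factors through the $H$-fixed subspace $\stars(\{B_x\}, s)^H$. Applying \cref{many H-equivariant stars} one chooses $W$ so that every such fixed-point subspace is $(n-2)$-connected for every isotropy subgroup $H \subseteq G$ and every chain length $k \leq n$. Then the boundary map $S^{k-2} \to \stars^H$ extends across $[0,1]^{k-1}$, and spreading this extension by the $G$-action defines $\tilde F$ on the entire orbit.

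For (2), given two $G$-coherent refinements $\tilde F_1, \tilde F_0$ of $F$ modeled on $V$, I would form the Burnside functor $J \colon \mathcal C \times \two{} \to \burnside$ with $J(c, i) = F(c)$ and with $J$ sending each morphism in the $\two{}$-direction to the identity correspondence on $F(c)$; let $G$ act trivially on $\two{}$. The pair $(\tilde F_1, \tilde F_0)$ is then a partial $G$-coherent refinement of $J$ over $(\mathcal C \times \{1\}) \sqcup (\mathcal C \times \{0\})$, and the obstruction-theoretic argument of (1) extends it to a full refinement of $J$. This refinement encodes a $G$-equivariant weak equivalence $\tilde F_1 \simeq \tilde F_0$; the required $W$ only needs to be enlarged to accommodate chain length $n+1$ in place of $n$, which is again supplied by \cref{many H-equivariant stars}. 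Part (3) is immediate from \cref{star suspension}: pick any $G$-invariant star $B' \subseteq V'$ and replace every $B_x \subseteq V$ in $\tilde F_V$ by $B_x \times B' \subseteq V \oplus V'$; at every level the resulting star maps are the $V'$-suspensions of the originals, and $G$-equivariance is preserved because $B'$ is $G$-invariant.

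The main obstacle is the equivariance bookkeeping in (1)--(2): one must verify that, on each orbit representative $\sigma$ with isotropy $H$, the boundary data inherited from shorter chains genuinely lands in the $H$-fixed subspace $\stars(\{B_x\}, s)^H$, so that \cref{many H-equivariant stars} can be applied. This rests on carefully unpacking the compatibility axioms of \cref{def sz}, which guarantee that the composition and merging operations used to build the boundary commute with the $H$-action on stars. Once this is established, the remaining arguments are standard equivariant obstruction theory.
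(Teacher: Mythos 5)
The paper does not supply its own proof of this proposition but cites it directly from \cite{stoffregen2018localization}; your inductive obstruction-theoretic argument---extending over $G$-orbits of chains ordered by length, verifying via the coherence axioms of \cref{def sz} that the boundary data on each cube $[0,1]^{k-1}$ lands in the fixed subspace $\stars(\{B_x\}, s)^{H}$, and invoking \cref{many H-equivariant stars} to fill the cube---is correct and reproduces the strategy used there. Your treatment of part (2) via a refinement of the natural-transformation diagram over $\mathcal C \times \two{}$ and of part (3) via \cref{star suspension} is likewise correct.
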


The preceding proposition allows us finally to define stable Burnside functors and their geometric realizations. The construction uses homotopy colimits over a slightly larger category.

\begin{df}
  \label{df: double zero}
  The category \(\two{n}_+\) has objects \(\ob(\two{n}_+) = \ob(\two{n}) \cup \{\ast\}\) and morphisms
  \[\homs_{\two{n}_+}(u, v) = 
    \begin{cases}
      \homs_{\two{n}}, & v \in \ob(\two{n}) \\
      \{\ast\}, & v = \ast, u \in \ob(\two{n}), \\
      \emptyset, & v = \ast, u = \vec{0}.
    \end{cases}
  \]
\end{df}

That is, \(\two{n}_+\) can be seen as a \(\two{n}\) with the terminal object \(\vec{0}\) ``doubled''.

\begin{df}
  \label{def stable realization}
  A \emph{stable Burnside functor with external action of a group \(G\)} is a triple 
  \[(F \colon \two{n} \to \burnside, \psi \acts F, V \in \representationring(G))\]
  of Burnside functor \(F\), external action \(\psi\) of \(G\) on \(F\) compatible with an action of \(G\) on \(\mathcal C\), and a virtual representation \(V\).

  The \emph{stable homotopy type} of \((F, \psi, V)\) is the equivariant suspension spectrum (seen as an element of the \(G\)-Spanier-Whitehead category)
  \[\abs{F} \coloneqq \susp^{V-W} \susp^\infty \hocolim \tilde{F}_W^+,\]
  where \(W\) is an orthogonal representation of \(G\) for which \cref{e refinements equivalence item} of \cref{e-spatial refinements exist} holds, \(\tilde{F}_W\) is a spatial refinement of \(F\) modeled on \(W\), and \(\tilde{F}_W^+\) is its extension to a homotopy coherent diagram
  \[\tilde{F}_W^+ \colon \two{n}_+ \to \tops_*\]
  obtained by letting \(\tilde{F}_W^+(\ast)\) be the basepoint.
\end{df}

%%% Local Variables:
%%% mode: latex
%%% TeX-master: "../main"
%%% End:

\section{\(G\)-cubical categories are external actions on Burnside functors}
\label{sec:g-cats-external-actions}
Independently of \cite{stoffregen2018localization}, a notion of external action on a Burnside functor was introduced by Musyt \cite{musyt}. We use this as a stepping stone in order to pronounce the comparison map establishing equivalence between the notions of equivariant cubical flow categories and external actions on Burnside functors (in the sense of \cite{stoffregen2018localization}).
\label{bps-musyt-sz}
\subsection{Musyt's formalism}
\begin{df}
  \label{def musyt}
  Fix a Burnside functor \(F \colon \mathcal \mathcal \two{n} \to \mathcal B.\) Say there exists an action of \(G\) by \(\phi\) on \(\two{n}.\) An \emph{external action on \(F\) compatible with \(\phi\)} consists of the following data:
  \begin{enumerate}
  \item a collection of bijections
    \[\{\phi_{g, v} \colon F(v) \to F(gv) \suchthat g \in G, \enspace v \in \two{n}\},\]
  \item for every pair \(u \geq v\) in \(\two{n}\) and every \(g \in G,\) a bijection
    \[\phi_{g, u, v} \colon F(u, v) \to F(gu, gv).\]
  \end{enumerate}
  These data are subject to the following conditions:
  \begin{enumerate}[(MD-1)]
  \item \label{musyt identity} for any \(u \geq v \in \two{n},\) the maps \(\phi_{e, u} \colon F(u) \to F(u)\) and \(\phi_{e, u, v} \colon F(u, v) \to F(u, v)\) are the identity,
  \item \label{musyt 1-composition} \(\phi_{gh, u} = \phi_{g, hu} \circ \phi_{h, u},\)
  \item \label{musyt 2-composition} \(\phi_{gh, u, v} = \phi_{g, hu, hv} \circ \phi_{h, u, v},\)
  \item \label{musyt source target} \(\phi_{g, u} \circ s = s \circ \phi_{g, u, v},\) \(\phi_{g, v} \circ t = t \circ \phi_{g, u, v},\)
  \item \label{musyt composition} \(\phi_{g, u, w} \circ F(u, v, w) = F(gu, gv, gw) \circ \left(\phi_{g, v, w} \times \phi_{g, u, v}\right)\) is an equality of functions \sloppy \({F(u, v) \times_{F(v)} F(v, w) \to F(gu, gw)}.\)
  \end{enumerate}
  We will refer to the functions \(\phi_{g, v}\) and \(\phi_{g, u, v}\) satisfying \ref{musyt identity}--\ref{musyt composition} as \emph{Musyt data} (of external action on \(F\)).
\end{df}

%%% Local Variables:
%%% mode: latex
%%% TeX-master: "../main"
%%% End:

\subsection{Musyt and SZ}
Essentially, Musyt's formalism corresponds to that of Stoffregen-Zhang by exchanging \(1\)-isomorphisms in \(\mathcal B\) for bijections of sets, and some of the \(2\)-isomorphisms for equalities of functions. We write out the comparison more explicitly.

\begin{construction}
  \label{musyt to sz}
  From Musyt data of external action we produce Stoffregen-Zhang external action as follows.
  \begin{enumerate}
  \item From a bijection \(\phi_{g, u} \colon F(u) \to F(gu),\) we produce a correspondence \(\psi_{g, u} \colon F(u) \to F(gu)\) by
    \[\psi_{g, u} \coloneqq F(u) \xot{\id} F(u) \xto{\phi_{g, u}} F(gu).\]
  \item The 2-morphisms \(\psi_{g, h, u} \colon \psi_{gh, u} \to \psi_{g, hu} \circ \psi_{h, u}\) are given by the map
    \[F(u) \to F(u) \times_{F(hu)} F(hu), \quad a \mapsto \left(a, \phi_{h, u}(a)\right).\]
    They are \(2\)-morphisms because \(\phi_{gh, u} = \phi_{g, hu} \circ \phi_{h, u}.\)
  \item From bijection \(\phi_{g, u, v} \colon F(u, v) \to F(gu, gv)\) we produce a \(2\)-morphism \(\psi_{g, u, v} \colon \psi_{g, v} \circ F(u, v) \to F(gu, gv) \circ \psi_{g, u}\) by the following formula:
    \[[a, t_{F(u, v)}(a)] \mapsto [\phi^{-1}_{g, u}(s_{F(gu, gv)}(\phi_{g, u, v}(a))), \phi_{g, u, v}(a)] = [s_{F(u, v)}(\phi_{g, u, v}(a)), \phi_{g, u, v}(a)],\]
    where \(a \in F(u, v)\) uniquely determines an element of the correspondence \(F(u, v) \times_{F(v)} \psi_{g, v},\) and analogously \(\phi_{g, u, v}(a)\) for \(\psi_{g, u} \times_{F(gu)} F(gu, gv).\)
    The conditions for this \(\psi_{g, u, v}\) to be a \(2\)-morphism are
    \begin{itemize}
    \item \(s_{F(u, v)} = \phi_{g, u}^{-1} \circ s_{F(gu, gv)} \circ \psi_{g, u, v},\) or equivalently \(\phi_{g, u} \circ s_{F(u, v)} = s_{F(gu, gv)} \circ \psi_{g, u, v},\)
    \item \(\phi_{g, v} \circ t_{F(u, v)} = t_{F(gu, gv)} \circ \psi_{g, u, v},\)
    \end{itemize}
    which is exactly the condition \ref{musyt source target} in \cref{def musyt}.
  \end{enumerate}
\end{construction}  

\begin{lemma}
    The \(1\)-morphisms \(\psi_g, u\) and the \(2\)-morphims \(\psi_{g, h, u}\) and \(\psi_{g, u, v}\) satisfy  compatibility conditions \ref{sz compatibility with group action} and \ref{sz compatibility with composition} of \cref{def sz}.
\end{lemma}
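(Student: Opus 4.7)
The plan is to verify each of conditions \ref{sz compatibility with group action} and \ref{sz compatibility with composition} by chasing a generic element through both sides of the required equality and reducing the resulting statement to one of the Musyt axioms of \cref{def musyt}. Throughout, the key simplification is that, by \cref{musyt to sz}, each $\psi_{g,u}$ has the form $F(u) \xot{\id} F(u) \xto{\phi_{g,u}} F(gu)$; consequently fibered products such as $\psi_{g,v} \circ F(A)$ or $F(gA) \circ \psi_{g,u}$ are in canonical bijection with $F(A)$ itself, and the $2$-morphisms appearing in the composites can be expressed purely in terms of the Musyt bijections $\phi_{g,u}$ and $\phi_{g,u,v}$.

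For condition \ref{sz compatibility with group action}, fix $A \colon u \to v$ and $g, h \in G$, and start with $a \in F(A)$ viewed as a point of $\psi_{gh,v} \circ F(A)$. I would trace $a$ through the four steps on the right-hand side. The first step $\psi_{g,h,v} \circ_1 \id$ merely rewrites the target datum $t(a)$ as the pair $(t(a), \phi_{h,v}(t(a)))$ and does not alter the $F(A)$-coordinate. The second step $\id \circ_1 \psi_{h,A}$ replaces $a$ by $\phi_{h,u,v}(a) \in F(hA)$. The third step $\psi_{g,hA} \circ_1 \id$ further replaces this by $\phi_{g,hu,hv}(\phi_{h,u,v}(a)) \in F(ghA)$, and the last step $\id \circ_1 \psi_{g,h,u}$ merely repackages source data. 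Meanwhile, by construction $\psi_{gh,A}$ sends $a$ to $\phi_{gh,u,v}(a)$. The required equality then reduces to $\phi_{gh,u,v} = \phi_{g,hu,hv} \circ \phi_{h,u,v}$, which is \ref{musyt 2-composition}.

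For condition \ref{sz compatibility with composition}, an element of $\psi_{g,w} \circ F(B) \circ F(A)$ corresponds to a pair $(a,b) \in F(A) \times_{F(v)} F(B)$. Along the first composite, $\psi_{g,B} \circ_1 \id$ replaces $b$ by $\phi_{g,v,w}(b)$, then $\id \circ_1 \psi_{g,A}$ replaces $a$ by $\phi_{g,u,v}(a)$, and finally $F_{gA,gB} \circ_1 \id$ merges them into $F(gA,gB)(\phi_{g,u,v}(a), \phi_{g,v,w}(b)) \in F(gB \circ gA)$. Along the second composite, $\id \circ_1 F_{A,B}$ produces $F(A,B)(a,b) \in F(B \circ A)$, and $\psi_{g,B \circ A}$ then yields $\phi_{g,u,w}(F(A,B)(a,b))$. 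Equality of the two outputs is exactly the content of \ref{musyt composition}.

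The main bookkeeping difficulty is ensuring that the implicit bijections with $F(A)$ or $F(A) \times_{F(v)} F(B)$ used in these element-chases are consistent with the source- and target-maps defined in \cref{musyt to sz}; condition \ref{musyt source target} is precisely what makes these identifications well-defined. Once that is checked, the statement reduces transparently to the Musyt axioms \ref{musyt 2-composition} and \ref{musyt composition}, and no further coherence data intervenes.
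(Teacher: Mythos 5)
Your argument is correct and follows essentially the same route as the paper's proof: in both cases one chases a generic element through the two composites prescribed by \ref{sz compatibility with group action} and \ref{sz compatibility with composition}, using the canonical identification of the fibered products with $F(A)$ (respectively $F(A) \times_{F(v)} F(B)$) coming from the fact that $\psi_{g,u}$ has identity source map, and the verification collapses to \ref{musyt 2-composition} and \ref{musyt composition} respectively. Your closing remark about \ref{musyt source target} ensuring the identifications are well-defined is the one point the paper handles slightly upstream, inside \cref{musyt to sz} where the $\psi_{g,u,v}$ are shown to be $2$-morphisms; the proof of the lemma itself is otherwise the same computation.
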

\begin{proof}
For \ref{sz compatibility with group action}, take an element \((a, t_{F(u, v)}(a)) \in F(u, v) \times_{F(v)} \psi_{gh, v}.\) The sequence of maps in \ref{sz compatibility with group action} then takes the form
  \begin{align*}
      \left(a, t_{F(u, v)}(a)\right) & \mapsto \left(a, t_{F(u, v)}(a), \phi_{h, v}(t_{F(u, v)}(a))\right) \\ & \mapsto \left(s_{F(u, v)}(a), \phi_{h, u, v}(a), \phi_{h, v}(t_{F(u, v)}(a))\right) \\ & \mapsto \left(s_{F(u, v)}(a), s_{F(hu, hv)}(\phi_{h, u, v}(a)), \phi_{g, hu, hv}(\phi_{h, u, v}(a))\right) \\ & = \left(s_{F(u, v)}(a), s_{F(hu, hv)} (\phi_{h, u, v}(a)), \phi_{gh, u, v}(a)\right) \\ & \mapsto \left(s_{F(u, v)}(a), \phi_{gh, u, v}(a)\right),
  \end{align*}
    which is the same as \(\phi_{gh, u, v},\) as required.

  For \ref{sz compatibility with composition}, we get the following diagram:
  \begin{equation*}
    \begin{tikzcd}[column sep=tiny]
        & \left(a, s_{F(v, w)}(b), \phi_{g, u, w}(b)\right) \arrow{dr}& \\
        (a, b, t(b)) \arrow{d} \arrow{ur} & & \left(s_{F(u, v)}(a), \phi_{g, u, v}(a), \phi_{g, u, w}\right) \arrow{d} \\
        \left(F_{u, v, w}(a, b), t(b)\right) \arrow{dr} & & \left(s_{F(u, v)}(a), F_{gu, gv, gw}(\phi_{g, u, v}(a), \phi_{g, v, w}(b))\right) \\
        & \left(s_{F(u, v)}(a), \phi_{g, u, w}(F_{u, v, w}(a, b)))\right) \arrow[equals]{ur} & &
    \end{tikzcd}
  \end{equation*}
  with the lower right equality arrow stated by \ref{musyt composition}.
\end{proof}

\begin{construction}
  \label{sz to musyt}
  Given an external action on a Burnside functor as in \cref{def sz}, we produce a Musyt version of external action as follows.
  \begin{enumerate}
  \item From \(1\)-isomorphism
    \(F(u) \xot{s_{g, u}} \psi_{g, u} \xto{t_{g, u}} F(gu)\) produce bijection
    \[\phi_{g, u} = t_{g, u} \circ s_{g, u}^{-1}.\] 
  \item There is a \(2\)-morphism
    \[\psi_{g, u, v} \colon \psi_{g, v} \circ F(u, v) \to F(gu, gv) \circ \psi_{g, u},\]
    meaning a function \(F(u, v) \times_{F(v)} \psi_{g, v} \to \psi_{g, u} \times_{F(gu)} F(gu, gv).\)
    
    \noindent Establish a bijection \(\alpha \colon F(u, v) \to F(u, v) \times_{F(v)} \psi_{g, v}\) by
    \[F(u, v) \ni a \mapsto [a, s^{-1}_{\psi_{g, v}}(t_{F(u, v)}(a))] \in F(u, v) \times_{F(v)} \psi_{g, v}\]
    and similarly \(\beta \colon F(gu, gv) \to \psi_{g, u} \times_{F(gu)} F(gu, gv)\) by
    \[F(gu, gv) \ni b \mapsto [t^{-1}_{\psi_{g, u}}(s_{F(gu, gv)}(b)), b] \in \psi_{g, u} \times_{F(gu)} F(gu, gv).\]
    Then, \(\phi_{g, u, v}\) equals \(\beta^{-1} \circ \psi_{g, u, v} \circ \alpha\).
  \end{enumerate}
\end{construction}
\begin{lemma}
  The functions \(\phi_{g, u}\) and \(\phi_{g, u, v}\) satisfy conditions of \cref{def musyt}.
\end{lemma}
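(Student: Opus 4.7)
The plan is to verify each of the five Musyt axioms (MD-1)--(MD-5) by unwinding Construction~\ref{sz to musyt}, using the SZ axioms (EB-1), (EB-2), and the unit/coherence structure on the $\psi_{g,h,u}$. At each step, the main tool will be translating a diagrammatic statement in $\mathcal B$ into an equality of set functions by tracing elements through the bijections $\alpha$ and $\beta$. Throughout, I will use that invertibility of the $1$-morphism $\psi_{g,u}$ in $\mathcal B$ means both the source map $s_{g,u}$ and target map $t_{g,u}$ of the correspondence $\psi_{g,u}$ are bijections, so that $\phi_{g,u}=t_{g,u}\circ s_{g,u}^{-1}$ is a well-defined bijection.

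For (MD-1), I would observe that $\psi_{e,u}=\id_{F(u)}$ up to the unique (identity) $2$-iso, so $\phi_{e,u}=\id$; the corresponding statement for $\phi_{e,u,v}$ then follows because $\psi_{e,u,v}$ is forced to be the identity $2$-morphism (again by uniqueness), and $\alpha,\beta$ reduce to the obvious identifications. For (MD-2), the $2$-isomorphism $\psi_{g,h,u}\colon\psi_{gh,u}\to\psi_{g,hu}\circ\psi_{h,u}$ realises a bijection of correspondences, and a direct chase shows that it translates to the equality $\phi_{gh,u}=\phi_{g,hu}\circ\phi_{h,u}$ once we identify the fibre product with the composite bijection. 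For (MD-4), the defining property of $\psi_{g,u,v}$ as a $2$-morphism is that it commutes with source- and target-maps of the relevant fibre products; under $\alpha$ and $\beta$ these source/target conditions become exactly $\phi_{g,u}\circ s=s\circ\phi_{g,u,v}$ and $\phi_{g,v}\circ t=t\circ\phi_{g,u,v}$.

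The two nontrivial axioms are (MD-3) and (MD-5), obtained from the SZ coherence axioms (EB-1) and (EB-2) respectively. For (MD-3), I would take the composite diagram from (EB-1) for $\psi_{gh,A}$, start with a fibre product element $(a,s^{-1}_{\psi_{gh,v}}(t_{F(u,v)}(a)))$ corresponding under $\alpha$ to $a\in F(u,v)$, and track it through the four-step composite. Each step is governed by one of $\psi_{g,h,v}$, $\psi_{h,A}$, $\psi_{g,hA}$, $\psi_{g,h,u}$, and their effect on bijections is precisely $\phi_{h,v}$, $\phi_{h,u,v}$, $\phi_{g,hu,hv}$, $\phi_{g,hu}$ (inverse, respectively). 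The final result should be $\phi_{g,hu,hv}\circ\phi_{h,u,v}(a)\in F(ghu,ghv)$, and matching this to $\phi_{gh,u,v}(a)\in F(ghu,ghv)$ gives (MD-3). For (MD-5), axiom (EB-2) equates two $2$-morphisms $\psi_{g,v,w}\circ F(u,v,w)\Rightarrow F(gu,gv,gw)\circ(\psi_{g,u,v}\times\psi_{g,v,w})$ (up to the obvious associator). Chasing the element $(a,b)\in F(u,v)\times_{F(v)} F(v,w)$ through both sides, invoking (MD-4) already established to convert source/target conditions, and applying the $\alpha,\beta$ bijections at the appropriate places, yields (MD-5).

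The main obstacle will be bookkeeping: writing out the fibre products $\psi_{g,u}\times_{F(gu)} F(gu,gv)$, $F(u,v)\times_{F(v)}\psi_{g,v}$, and the triple fibre products in (EB-2), and checking that the $\alpha,\beta$ identifications are compatible with each arrow in the SZ coherence diagrams. A clean organising principle is that, because all $\psi_{g,u}$ are represented by bijections, every fibre product collapses canonically to a single set, and every $2$-morphism in $\mathcal B$ becomes the unique function making the source/target squares commute. With this in mind, each verification reduces to a short element chase, as sketched above.
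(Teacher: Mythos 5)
Your overall strategy matches the paper's: unwind Construction~\ref{sz to musyt}, use that the source- and target-maps of the invertible correspondences $\psi_{g,u}$ are bijections, and trace elements through $\alpha$ and $\beta$ to turn each SZ axiom into a Musyt axiom. Your treatments of (MD-2), (MD-3), (MD-4), (MD-5) line up with the paper's.

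The gap is in (MD-1). You assert that $\psi_{e,u}=\id_{F(u)}$ ``up to the unique identity $2$-iso'' and that $\psi_{e,u,v}$ is ``forced to be the identity $2$-morphism by uniqueness.'' Neither claim follows from Definition~\ref{def sz}: the SZ definition contains no unit axiom saying $\psi_{e,v}$ is (or is even $2$-isomorphic to) the identity $1$-morphism, and the uniqueness remark there refers only to the $2$-isomorphisms $\psi_{g,h,v}$, not to $\psi_{g,v}$. Taking $\psi_{e,u}=\id$ for granted begs the very thing (MD-1) asks you to prove. The correct route — and the one the paper takes — inverts your logical order: first derive (MD-2) from the existence of $\psi_{g,h,u}\colon\psi_{gh,u}\to\psi_{g,hu}\circ\psi_{h,u}$, which forces $\phi_{gh,u}=\phi_{g,hu}\circ\phi_{h,u}$; then specialize to $g=h=e$ to get $\phi_{e,u}\circ\phi_{e,u}=\phi_{e,u}$, and since $\phi_{e,u}$ is a bijection it must be $\id_{F(u)}$. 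The identity $\phi_{e,u,v}=\id$ is deduced the same way, after first obtaining (MD-3) from (EB-1). So your proposal needs (MD-1) to come \emph{after} (MD-2) and (MD-3), not before.
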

\begin{proof}
  The existence of the \(2\)-morphism \(\psi_{g, h, u} \colon \psi_{gh, u} \to \psi_{g, hu} \circ \psi_{h, u}\) (as per \cref{sz 2-isos} of \cref{def sz}) implies that \(\phi_{gh, u} = \phi_{g, hu} \circ \phi_{h, u}\), thus satisfying \ref{musyt 1-composition} of \cref{def musyt}. In particular, the function \(\phi_{e, v}\) satisfies \(\phi_{e, v} \circ \phi_{e, v} = \phi_{e, v}\). As it is a bijection, this implies that \(\phi_{e, v} = \id_{F(v)}\). Thus, \ref{musyt identity} of \cref{def musyt} holds for \(\phi_{e, v}\). 

  \noindent Similarly, \ref{sz compatibility with group action} of \cref{def sz} implies \ref{musyt 2-composition} and further \ref{musyt identity} for the \(\phi_{e, u, v}\).
  
  \noindent To check \ref{musyt source target}, note that because \(\psi_{g, u, v}\) is a \(2\)-morphism, we have 
  \[s_{\psi_{g, v} \circ F(u, v)} = s_{F(gu, gv) \circ \psi_{g, u}} \circ \psi_{g, u, v}.\]
  Together with the equalities
  \[s_{F(gu, gv)} = \phi_{g, u} \circ s_{F(gu, gv) \circ \psi_{g, u}} \circ \beta, \quad s_{F(u, v)} = s_{\psi_{g, v} \circ F(u, v)} \circ \alpha,\]
  this yields
  \begin{equation*}
  \begin{aligned}
    \phi_{g, u} \circ s_{F(u, v)} & = \phi_{g, u} \circ s_{\psi_{g, v} \circ F(u, v)} \circ \alpha \\
                                  & = \phi_{g, u} \circ s_{F(gu, gv) \circ \psi_{g, u}} \circ \psi_{g, u, v} \circ \alpha \\
                                  & = (\phi_{g, u} \circ s_{F(gu, gv) \circ \psi_{g, u}} \circ \beta) \circ (\beta^{-1} \circ \psi_{g, u, v} \circ \alpha) \\
                                  & = s_{F(gu,gv)} \circ \phi_{g, u, v }.
  \end{aligned}
  \end{equation*}
  This proves the first part of \ref{musyt source target}, and the statement about target maps is shown analogously.

  \noindent Finally, \ref{musyt composition} follows from \ref{sz compatibility with composition} of \cref{def musyt} in a similar manner.
\end{proof}

\begin{prop} (Musyt \(\to\) SZ \(\to\) Musyt)
  \label{musyt sz musyt}
  If Musyt data \(\{\widetilde{\phi}_{g, v}\}_{g \in G, v \in \two{n}},\) \(\{\widetilde{\phi}_{g, u, v}\}_{g \in G, u, v \in \two{n}}\) is obtained from Musyt data \(\{{\phi}_{g, v}\}_{g \in G, v \in \two{n}},\) \(\{{\phi}_{g, u, v}\}_{g \in G, u, v \in \two{n}}\) by performing \cref{musyt to sz} and then \cref{sz to musyt}, then \(\widetilde{\phi}_{g, v} = \phi_{g, v}\) and \(\widetilde{\phi}_{g, u, v} = \phi_{g, u, v}\) for all \(g \in G,\) \(u, v \in \two{n}.\)
\end{prop}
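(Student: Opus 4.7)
The plan is a direct, elementwise verification: starting from arbitrary Musyt data $(\phi_{g,v}, \phi_{g,u,v})$, I would compute its image under \cref{musyt to sz} followed by \cref{sz to musyt} and check that one recovers the original functions. Since the two constructions are essentially repackagings of the same combinatorial data, no new idea should be required beyond unpacking definitions and invoking condition \ref{musyt source target} at one key step.

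First I would record the correspondence produced by \cref{musyt to sz}: it is $\psi_{g,v} = \bigl(F(v) \xot{\id} F(v) \xto{\phi_{g,v}} F(gv)\bigr)$, so in particular its source map is $s_{g,v} = \id_{F(v)}$ and its target map is $t_{g,v} = \phi_{g,v}$. Feeding this into \cref{sz to musyt} gives
\[
\widetilde{\phi}_{g,v} \;=\; t_{g,v} \circ s_{g,v}^{-1} \;=\; \phi_{g,v},
\]
which disposes of the first equality.

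For the equality $\widetilde{\phi}_{g,u,v} = \phi_{g,u,v}$, the plan is to trace an arbitrary $a \in F(u,v)$ through the composite $\beta^{-1} \circ \psi_{g,u,v} \circ \alpha$ appearing in \cref{sz to musyt}. Because $s_{\psi_{g,v}} = \id$, the first map is simply $\alpha(a) = [a, t_{F(u,v)}(a)]$. The formula for $\psi_{g,u,v}$ installed by \cref{musyt to sz} then yields
\[
\psi_{g,u,v}\bigl([a, t_{F(u,v)}(a)]\bigr) \;=\; \bigl[s_{F(u,v)}(a),\; \phi_{g,u,v}(a)\bigr].
\]
It remains to evaluate $\beta^{-1}$ on this pair. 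Here the essential input is condition \ref{musyt source target}, namely $\phi_{g,u} \circ s_{F(u,v)} = s_{F(gu,gv)} \circ \phi_{g,u,v}$; rearranged, this reads
\[
s_{F(u,v)}(a) \;=\; \phi_{g,u}^{-1}\bigl(s_{F(gu,gv)}(\phi_{g,u,v}(a))\bigr) \;=\; t_{\psi_{g,u}}^{-1}\bigl(s_{F(gu,gv)}(\phi_{g,u,v}(a))\bigr),
\]
which is exactly the first coordinate of $\beta(\phi_{g,u,v}(a))$. Hence $\beta^{-1}$ returns $\phi_{g,u,v}(a)$, and we conclude $\widetilde{\phi}_{g,u,v} = \phi_{g,u,v}$ as desired.

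The only subtlety I anticipate is bookkeeping: keeping straight the source and target maps of $\psi_{g,v}$ (which come out as $\id$ and $\phi_{g,v}$ in that order, and not the reverse) and the roles of the auxiliary bijections $\alpha$ and $\beta$ that convert $\psi_{g,u,v}$ from a map of fibered products into a map between the underlying sets $F(u,v)$ and $F(gu,gv)$. I do not expect a genuine obstacle, because the two constructions are arranged so that each one inverts the other strictly rather than merely up to a $2$-isomorphism, so the comparison is an honest equality of functions and not only a compatibility of diagrams.
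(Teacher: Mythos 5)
Your proposal is correct and follows essentially the same route as the paper: you verify $\widetilde\phi_{g,v}=\phi_{g,v}$ directly from the definition of $\psi_{g,v}$, and then trace an arbitrary $a\in F(u,v)$ through $\beta^{-1}\circ\psi_{g,u,v}\circ\alpha$, invoking (MD-4) to identify the first coordinate with that of $\beta(\phi_{g,u,v}(a))$. The only difference is cosmetic --- you flag explicitly where (MD-4) enters, whereas the paper simply displays the chain of assignments.
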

\begin{proof}
  We write out the identities in full.
  \begin{enumerate}[(1)]
  \item \(\phi_{g, u} \mapsto (F(u) \xot{\id} F(u) \xto{\phi_{g, u}} F(gu)) \mapsto \phi_{g, u} \circ \id = \phi_{g, u}\).
  \item \(\tilde{\phi}_{g, u, v}\) maps \(a \in F(u, v)\) as follows:
    \begin{align*}
      a & \xmapsto{\alpha} [a, s^{-1}_{\psi_{g, v}}(t_{F(u, v)}(a))] = [a, t_{F(u, v)}] \\
        & \xmapsto{\psi_{g, u, v}} [\phi^{-1}_{g, u}(s_{F(gu, gv)}(\phi_{g, u, v}(a))), \phi_{g, u, v}(a)] =  [t_{\psi_{g, u}}^{-1}(s_{F(gu, gv)}(\phi_{g, u, v}(a))), \phi_{g, u, v}(a)] \\
        & \xmapsto{\beta^{-1}} \phi_{g, u, v}(a).    
      \end{align*}
  \end{enumerate}
\end{proof}

Analogous considerations exhibit the other equivalence.

\begin{prop}(SZ \(\to\) Musyt \(\to\) SZ)
  \label{sz musyt sz}
  If the Stoffregen-Zhang external action \(\tilde{\psi}_{g, v}\), \(\tilde{\psi}_{g, h, v}\), \(\tilde{\psi}_{g, u, v}\) is obtained from external action \(\psi_{g, v}\), \(\tilde{\psi}_{g, h, v}\), \(\tilde{\psi}_{g, u, v}\) by applying \cref{sz to musyt} and \cref{musyt to sz}, the two are equivariantly naturally isomorphic.
\end{prop}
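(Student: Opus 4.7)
The plan is to exploit the fact that each SZ $1$-morphism $\psi_{g,v}$, being a $1$-isomorphism in $\burnside$, has bijective source map $s_{g,v} \colon \psi_{g,v} \to F(v)$. Because by construction $\tilde\psi_{g,v}$ is the correspondence $F(v) \xot{\id} F(v) \xra{\phi_{g,v}} F(gv)$ with $\phi_{g,v} = t_{g, v} \circ s_{g, v}^{-1}$, the inverse $s_{g, v}^{-1} \colon F(v) \to \psi_{g, v}$ is a bijection of correspondences $\Theta_{g, v} \colon \tilde\psi_{g, v} \to \psi_{g, v}$, which will serve as the component of the sought natural isomorphism.

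First I would construct a witnessing Burnside functor $J \colon \two{n} \times \two{} \to \burnside$. Put $J(v, i) \letequal F(v)$ for $i = 0, 1$; let $J$ agree with $F$ on horizontal morphisms $(u, i) \to (v, i)$, and send each vertical transition $\id_v \times (1 \to 0)$ to $\id_{F(v)}$, which is tautologically a $1$-isomorphism. The remaining composition data for $J$ is forced by unitality, making $J$ a natural isomorphism of Burnside functors between $F$ with external action $\psi$ and $F$ with external action $\tilde\psi$. Next, I would define the external $G$-action on $J$: it is $\psi$ on the top slice $\two{n} \times \{1\}$, $\tilde\psi$ on the bottom slice $\two{n} \times \{0\}$, and on each vertical transition $\id_v \times (1 \to 0)$ the required $2$-morphism $\tilde\psi_{g, v} \circ \id_{F(v)} \to \id_{F(gv)} \circ \psi_{g, v}$ is $\Theta_{g, v}$.

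The only substantive check is axiom \ref{sz compatibility with composition} of \cref{def sz} applied to the two factorizations of a diagonal morphism $(u, 1) \to (v, 0)$, namely $\tau_v \circ (A \times \id_1) = (A \times \id_0) \circ \tau_u$ with $\tau_w \letequal \id_w \times (1 \to 0)$, which after cancelling unitors demands the equality
\[(\id \circ \Theta_{g, u}) \circ \tilde\psi_{g, u, v} = \psi_{g, u, v} \circ (\Theta_{g, v} \circ \id)\]
of $2$-morphisms $\tilde\psi_{g, v} \circ F(u, v) \to F(gu, gv) \circ \psi_{g, u}$. This is a direct diagram chase: the right-hand side precomposes an element $a \in F(u, v)$ with the bijection $\alpha$ of \cref{sz to musyt} (since the fibre product $F(u, v) \times_{F(v)} \psi_{g, v}$ is identified with $F(u, v)$ via $a \mapsto [a, s_{g, v}^{-1}(t_{F(u, v)}(a))]$) and then applies $\psi_{g, u, v}$; by the very definition of $\phi_{g, u, v}$, this composite equals $\beta \circ \phi_{g, u, v}$. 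The left-hand side applies the explicit formula for $\tilde\psi_{g, u, v}$ from \cref{musyt to sz} and then $\id \circ \Theta_{g, u}$, and matches the right-hand side after using \ref{musyt source target}, which the preceding lemma has already verified for the Musyt data arising from $\psi$. The analogous compatibility between $\psi_{g, h, v}$ and $\tilde\psi_{g, h, v}$ reduces in the same way to the cocycle identity $\phi_{gh, v} = \phi_{g, hv} \circ \phi_{h, v}$ built into the round-trip.

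The main obstacle will be the bookkeeping of setting up $J$ and its external action along mixed-direction morphisms in $\two{n} \times \two{}$, together with verifying axiom \ref{sz compatibility with group action} of \cref{def sz} for $J$. Once the identification $\Theta_{g, v} = s_{g, v}^{-1}$ is fixed, however, every coherence cell of $J$ is already encoded in one of the two round-trip constructions, so no genuinely new calculation is required beyond what the previous lemma has checked.
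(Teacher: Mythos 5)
The paper gives no proof of this proposition (it remarks only that ``analogous considerations exhibit the other equivalence''), so your proposal supplies an argument the text elides. Your key choice --- taking the component of the natural isomorphism to be the inverse of the source bijection, $\Theta_{g,v} = s_{g,v}^{-1} \colon \tilde\psi_{g,v} \to \psi_{g,v}$ --- is exactly right, and it is the step that the Musyt$\to$SZ$\to$Musyt direction avoids because there the round trip is an on-the-nose identity. You correctly verify that $\Theta_{g,v}$ commutes with the source map ($s_{g,v}\circ s_{g,v}^{-1}=\id = s_{\tilde\psi_{g,v}}$) and target map ($t_{g,v}\circ s_{g,v}^{-1}=\phi_{g,v}=t_{\tilde\psi_{g,v}}$), so it is a legitimate $2$-isomorphism. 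The construction of the witnessing functor $J\colon\two{n}\times\two{}\to\burnside$, with $J(v,i)=F(v)$, identity correspondences on the vertical edges, and $\Theta_{g,v}$ as the mixed-direction component of the external action, matches the shape demanded by \cref{sz external equivalence}. Your reduction of the coherence check (EB-2) for a diagonal morphism to the equality $(\id\circ\Theta_{g,u})\circ\tilde\psi_{g,u,v} = \psi_{g,u,v}\circ(\Theta_{g,v}\circ\id)$, and the subsequent unwinding via $\alpha,\beta$ from \cref{sz to musyt}, is a correct diagram chase, though you should be explicit that the claimed equality follows from applying (EB-2) to \emph{each} of the two factorizations of $(u,1)\to(v,0)$ separately and then invoking the (unitor) coherence $2$-cells of the Burnside functor $J$, rather than treating it as a single instance of the axiom. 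One small caveat not affecting your argument: the displayed formula for $\psi_{g,u,v}$ in the paper's \cref{musyt to sz} contains a typo (the first coordinate should read $s_{F(u,v)}(a)$, not $s_{F(u,v)}(\phi_{g,u,v}(a))$); your chase uses the correct version implicitly via \ref{musyt source target}.
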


%%% Local Variables:
%%% mode: latex
%%% TeX-master: "../main"
%%% End:

\subsection{Musyt and BPS}
We reproduce first the (non-equivariant) comparison maps of \cite{Lawson_2020}, and augment them to an equivariant equivalence of equivariant cubical flow categories and Musyt's version of external actions on Burnside functors.

  Note that our definition of an equivariant cubical flow category (\cref{bps e-cfc definition}) consists of a flow category \(\mathcal C\) along with a covering \(f \colon \susp^V \mathcal C \to \cube_{\sigma}(n)\). The virtual representation \(V\) plays a role in defining the geometric realization of this flow category. However, for the duration of this section this is irrelevant, and we will assume \(V = \{0\}\), writing \(f \colon \mathcal C \to \cube_{\sigma}(n)\).

\begin{construction}(\cite[Construction 4.17]{Lawson_2020})
  \label{lls cube to burnside}
  Given a cubical flow category \(f \colon \mathcal C \to C(n)\), construct a Burnside functor \(F \colon \two{n} \to \mathcal B\) as follows:
  \begin{itemize}
  \item for \(v \in \{0, 1\}^n,\) \(F(v) = f^{-1}(v),\)
  \item for \(v > w,\) \(F(v, w)\) is the set of path components of
    \[\coprod_{x \in f^{-1}(v), \enspace y \in f^{-1}(w)} \hom(x, y),\]
    with the source map \(F(v, w) \to F(v)\) sending the components coming from \(\hom(x, y)\) to \(x \in F(v),\) target map \(F(v, w) \to F(w)\) sending those to \(y \in F(w),\)
  \item \(F(u, v, w)\) for \(u > v > w\) is induced by the continuous composition map in \(\mathcal C.\)
  \end{itemize}
\end{construction}

\begin{construction}(\cite[Construction 4.19]{Lawson_2020})
  \label{lls burnside to cube}
  Given a Burnside functor \(F \colon \two{n} \to \mathcal B,\) construct a cubical flow category \(f \colon \mathcal C \to C(n)\) as follows:
  \begin{itemize}
  \item \(\ob(\mathcal C) = \disjointunion_{v \in \{0, 1\}^n} F(v),\) the functor \(f\) sending an object \(x \in F(v)\) to \(v,\)
  \item for any \(x \in \mathcal C,\) \(\hom(x, x)\) consists only of the identity morphism,
  \item for \(x, y \in \mathcal C\) with \(v = f(x) > f(y) = w,\) consider
    \[B_{x, y} = s^{-1}(x) \cap t^{-1}(y) \subseteq F(v, w)\]
    (``the set of arrows \(x \to y\)'') and let
    \[\homs(x, y) = B_{x, y} \times \cube(n)(v, w),\]
    with map \(f \colon \homs(x, y) \to \homs(f(x), f(y))\) the projection,
  \item the composition in \(\mathcal C\) is the map
    \[F(u, v, w) \times \circ \colon (B_{y, x} \times \cube(n)(v, w)) \times (B_{x, y} \times \cube(n)(u, v)) \to (B_{x, z} \times \cube(n)(u, w));\]
    that is to say, \(F(u \to v \to w)\) is applied in the \(B_{x, y}\) factor and the product map of permutohedra \(\circ\) is applied to the \(\cube(n)(u, v)\) factor.
  \end{itemize}
\end{construction}

The following \cref{bps to musyt} and \cref{musyt to bps} should be seen as extensions of \cref{lls cube to burnside} and \cref{lls burnside to cube}, respectively.

\begin{construction}
  \label{bps to musyt}
  Apply \cref{lls cube to burnside} to obtain a Burnside functor \(F \colon \two{n} \to \burnside\) from data of a  cubical flow category \((\mathcal C, f \colon \mathcal C \to \cube_\sigma(n))\). Given a \(G\)-equivariant structure on \((\mathcal C, f \colon \mathcal C \to \cube_\sigma(n))\), consisting of functors \(\mathcal G_g \colon \mathcal C \to \mathcal C\), we construct Musyt data of an external action on \(F\). In the following we adapt naming conventions from \cref{lls cube to burnside} and \cref{bps def}.
  \begin{itemize}
  \item 
    Given \(\mathcal G_g \colon \mathcal C \to \mathcal C\) commuting with the action on \(\cube_\sigma(n)\), we let 
    \[\phi_{g, v} \coloneqq \restrict{\mathcal G_g}{f^{-1}(v)} \colon F(v) \to F(g \acts v)\]
  \item 
    Since \((\mathcal G_g)_{x, y} \coloneqq \restrict{\mathcal G_g}{\homs_{\mathcal C}(x, y)}\) are diffeomorphisms, they induce a map of sets
    \[\phi_{g, v, w} \coloneqq \pi_0 \circ \restrict{\mathcal G_g}{\coprod_{f(x) = v,  f(y) = w} \homs_{\mathcal C}(x, y)} \colon F(v, w) \to F(g \acts v, g \acts w).\]
  \end{itemize}
  Because \(\mathcal G_g\) has inverse \(\mathcal G_{g^{-1}}\), all the maps \(\phi_{g, v}\) and \(\phi_{g, v, w}\) are bijections.
  The other Musyt data axioms follow, briefly:
  \begin{itemize}
  \item \ref{musyt identity} from \ref{bps identity},
  \item \ref{musyt 1-composition} and \ref{musyt 2-composition} from \ref{bps group condition},
  \item \ref{musyt source target} trivially from the definition of target- and source maps in \ref{lls cube to burnside},
  \item \ref{musyt composition} from \ref{bps compatibility condition}.
  \end{itemize}
\end{construction}

\begin{construction}
  \label{musyt to bps}
  Given a Burnside functor with an external action, we take \cref{lls burnside to cube} as the definition of the cube flow category, and then define functors \(\mathcal G_g \colon \mathcal C \to \mathcal C\).
  We do this in such a way that conditions of \cref{bps simplification lemma} are satisfied, and so only a few conditions in \cref{bps def} need to be checked.

  First, the Musyt definition contains an acion of \(G\) on \(\two{n}.\) This extends to a \(G\)-equivariant flow category structure on \(\cube_\sigma(n),\) as per \cref{cube action gives cube flow cat}.
  
  We continue by defining \(\mathcal G_g\):
  \begin{enumerate}
  \item Consider \(\disjoint_{v \in \two{n}} \phi_{g, v} \colon \ob(\mathcal C) = \disjoint_{v \in \two{n}} F(v) \to \disjoint_{v \in \two{n}} F(gv) = \ob(\mathcal C).\) This we take to define \(\mathcal G_g\) on objects.
  \item \(\phi_{g, v, w}\) restrict to bijections \(B_{x, y} \to B_{\phi_{g, v}(x), \phi_{g, w}(y)}\), and this we extend to
    \[\hom(x, y) = B_{x, y} \times \modulispace_{C(n)}(v, w) \to B_{\phi_{g, v}(x), \phi_{g, w}(y)} \times \modulispace_{\cube_\sigma(n)}(gv, gw)\]
    by taking maps of permutohedra from the fixed \(G\)-equivariant flow category structure on the cube \(\cube_\sigma(n).\)
  \end{enumerate}
  These are functors of flow categories because \(\widetilde{\mathcal G}_g \colon C(n) \to C(n)\) are as well and because of \ref{musyt composition} in \cref{def musyt}. 
\end{construction}

\begin{prop}(Musyt \(\to\) BPS \(\to\) Musyt)
  \label{musyt bps musyt}
  Applying \cref{musyt to bps} and then \cref{bps to musyt} yields the identity on Musyt data of external action on the Burnside functor.
\end{prop}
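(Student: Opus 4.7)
The plan is to unwind both constructions and verify, level by level, that the composite recovers the original Musyt data on the nose. The non-equivariant skeleton of this argument is essentially already present in the round-trip between Constructions \ref{lls cube to burnside} and \ref{lls burnside to cube}, which shows that the underlying Burnside functor is recovered; the remaining task is to track what happens to the bijections $\phi_{g,v}$ and $\phi_{g,u,v}$.

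First, I would record that after applying \cref{musyt to bps}, the flow category $\mathcal{C}$ has objects $\bigsqcup_{v \in \two{n}} F(v)$, so $f^{-1}(v) = F(v)$; the functor $\mathcal{G}_g$ is by definition $\bigsqcup_v \phi_{g,v}$ on objects. Therefore, when \cref{bps to musyt} forms $\tilde{\phi}_{g,v} \coloneqq \restrict{\mathcal{G}_g}{f^{-1}(v)}$, we read off immediately $\tilde{\phi}_{g,v} = \phi_{g,v}$.

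For the \(2\)-dimensional datum, the key observation is that for $x \in F(v)$, $y \in F(w)$ with $v > w$, the morphism space in $\mathcal{C}$ is
\[\hom_{\mathcal{C}}(x,y) = B_{x,y} \times \cube(n)(v,w),\]
and the permutohedron $\cube(n)(v,w)$ is connected (it is, by \cref{permutohedron cubical cpx}, a cubical complex homeomorphic to a ball, or by convention a point when $v=w$). Hence
\[\pi_0\Bigl(\coprod_{f(x)=v,\, f(y)=w} \hom_{\mathcal{C}}(x,y)\Bigr) \;=\; \coprod_{x,y} B_{x,y} \;=\; F(v,w),\]
recovering the original correspondence. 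By the definition of $\mathcal{G}_g$ on morphisms in \cref{musyt to bps}, the restriction to $B_{x,y} \times \cube(n)(v,w)$ is the product $\phi_{g,v,w}|_{B_{x,y}} \times (\gamma_g)_{v,w}$, where $(\gamma_g)_{v,w}$ is the equivariant structure on the cube flow category. Passing to $\pi_0$ collapses the permutohedral factor, so $\tilde{\phi}_{g,v,w} \coloneqq \pi_0 \circ \mathcal{G}_g|_{\coprod \hom(x,y)}$ agrees with $\phi_{g,v,w}$.

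I expect no serious obstacle here: both verifications reduce to definition chasing together with the single geometric input that permutohedra are connected. The only mild subtlety is bookkeeping — making sure the identification $\pi_0(\coprod_{x,y} B_{x,y} \times \cube(n)(v,w)) = F(v,w)$ is compatible with the source and target maps, which is immediate from the construction of $B_{x,y}$ as $s^{-1}(x) \cap t^{-1}(y)$, and that the cube action $(\gamma_g)_{v,w}$ on $\cube(n)(v,w)$ does not contribute after taking $\pi_0$ because $(\gamma_g)_{v,w}$ is a self-diffeomorphism of a connected space.
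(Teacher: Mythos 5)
Your proof is correct and fills in exactly the definition chase the paper omits (the paper simply says the proof ``is similar to that of Proposition \ref{musyt sz musyt}''). Both levels check out: $\tilde{\phi}_{g,v}=\phi_{g,v}$ is immediate because Construction \ref{musyt to bps} defines $\mathcal{G}_g$ on objects precisely as $\bigsqcup_v\phi_{g,v}$, and the $\pi_0$ step for $\tilde{\phi}_{g,u,v}$ is justified by the connectedness of the permutohedron $\cube(n)(v,w)$, which is the one genuinely topological input needed (and correctly identified) since the flow-category morphism spaces are actual manifolds rather than sets; this is the right analogue of the purely set-theoretic identity chase in the proof of \cref{musyt sz musyt}.
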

We omit the proof which is similar to that of \cref{musyt sz musyt}.

\begin{prop} (BPS \(\to\) Musyt \(\to\) BPS)
  \label{bps musyt bps}
  Let \(\mathcal C\) be a cubical flow category. Let \(\mathcal D\) be the cubical flow category obtained from \(\mathcal C\) by applying \cref{bps to musyt} and then \cref{musyt to bps}. Then \(\mathcal C \) and \(\mathcal D\) are equivariantly naturally isomorphic.
\end{prop}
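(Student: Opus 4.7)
The plan is to build a concrete equivariant isomorphism $\Phi \colon \mathcal C \to \mathcal D$ of equivariant cubical flow categories over $\cube_\sigma(n)$. On objects one has $\ob(\mathcal D) = \disjointunion_{v \in \two{n}} F(v) = \disjointunion_{v \in \two{n}} f^{-1}(v) = \ob(\mathcal C)$ directly from \cref{bps to musyt,musyt to bps}, so I take $\Phi$ to be the identity on objects. For $x, y \in \ob(\mathcal C)$ with $f(x) > f(y)$, the hypothesis that $f$ is a $G$-cover makes $f_{x, y} \colon \homs_{\mathcal C}(x, y) \to \cube_\sigma(n)(f(x), f(y))$ a covering map onto a permutohedron, which is path-connected and simply connected (per \cref{permutohedron cubical cpx}); hence the cover trivializes canonically, and I define
\[\Phi_{x, y} \colon \homs_{\mathcal C}(x, y) \xto{\cong} B_{x, y} \times \cube_\sigma(n)(f(x), f(y)) = \homs_{\mathcal D}(x, y), \qquad \alpha \mapsto ([\alpha], f_{x, y}(\alpha)),\]
with $B_{x, y} = \pi_0(\homs_{\mathcal C}(x, y))$.

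The second step is to verify that $\Phi$ is a functor over $\cube_\sigma(n)$ and that the grading agrees. Compatibility with $f$ holds by inspection of the second factor. For composition, \cref{lls burnside to cube} defines composition in $\mathcal D$ by combining $F(u, v, w)$ on the $B$-factor with the permutohedral product on the cube-factor, while \cref{bps to musyt} sets $F(u, v, w) = \pi_0$ of the $\mathcal C$-composition, so the two match under $\Phi$. For grading, in line with the paragraph preceding \cref{bps to musyt} I work with $V = 0$; then both categories assign to an object $x$ the $G_x$-representation $V_{f(x)} = \real^{f(x) \Delta \vec{0}}$ inherited from $\cube_\sigma(n)$.

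The third step is $G$-equivariance. By \cref{bps to musyt}, the Musyt data are $\phi_{g, v} = \restrict{\mathcal G^{\mathcal C}_g}{f^{-1}(v)}$ on objects and $\phi_{g, v, w} = \pi_0 \circ (\mathcal G^{\mathcal C}_g)_{x, y}$ on morphisms, and \cref{musyt to bps} reassembles $\mathcal G^{\mathcal D}_g$ from these together with the cube action. The desired identity $\Phi \circ \mathcal G^{\mathcal C}_g = \mathcal G^{\mathcal D}_g \circ \Phi$ therefore reduces on the $B$-factor to the naturality of $\pi_0$ under $\mathcal G^{\mathcal C}_g$, and on the cube-factor to the $G$-equivariance of $f_{x, y}$.

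The main obstacle is the first step: establishing that $\Phi_{x, y}$ is a \emph{canonical} and $G$-equivariant homeomorphism, rather than merely the existence of some splitting. Canonicity rests on the simple connectivity of the permutohedra, which uniquely trivializes every finite covering map onto them. $G$-equivariance of the trivialization follows by observing that $f_{x, y}$ itself is $G$-equivariant, so the induced permutation of path components matches the one produced by $\pi_0 \circ \mathcal G^{\mathcal C}_g$. Once this is in place, the remaining verifications are essentially diagram chases, since both $\mathcal D$ and the intermediate Musyt data are defined as $\pi_0$-shadows of the corresponding $\mathcal C$-structures.
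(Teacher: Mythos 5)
Your proposal is correct and follows essentially the same route as the paper: the identity on objects, the canonical splitting $\alpha \mapsto ([\alpha], f_{x,y}(\alpha))$ on morphism spaces (which the paper writes as $\pi_0 \times (f)_{x,y}$), well-definedness from the triviality of the covering $f_{x,y}$, and equivariance reduced to the naturality of $\pi_0$ together with the fixed equivariant structure on $\cube_\sigma(n)$. The only difference is that you spell out \emph{why} the cover trivializes canonically (simple connectivity of the permutohedra $\cube_\sigma(n)(u,v)$, being convex polytopes), whereas the paper simply invokes that the $(f)_{x,y}$ are trivial covering maps — a small but welcome gap-filling, since \cref{bps e-cfc definition} only asks for a $\integer_m$-cover, with triviality then automatic over the simply connected moduli spaces.
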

\begin{proof}
  The cubical flow categories \(\mathcal C\) and \(\mathcal D\) have the same sets of objects and actions of \(G\) on \(\ob(\mathcal C)\) and \(\ob(\mathcal D)\) agree.

  The action maps on morphism spaces take the form
  \[\left(\widetilde{\mathcal G}\right)_{x, y} = \restrict{\phi_{g, w}}{B_{x, y}} \times P_g \colon \enspace B_{x, y} \times \mathcal M_{C(n)}(v, w) \to B_{\phi_{g, v}(x), \phi_{g, w}(y)} \times \mathcal M_{\cube_\sigma(n)}(gv, gw),\]
  where
  \(P_g \colon \mathcal M_{\cube_\sigma(n)}(v, w) \to \mathcal M_{\cube_\sigma(n)}(gv, gw)\)
  is the map of morphism spaces contained in the equivariant flow category structure on the cube flow category \(C(n).\)

  The equivariant natural equivalence \(F \colon \mathcal C \to \mathcal D\) is given as follows.
  \begin{itemize}
  \item On objects, it is the identity.
  \item On morphism spaces, it maps the moduli space \(\mathcal M_{\mathcal C}(x, y)\) to \(\mathcal M_{\mathcal D}(x, y) = B_{x, y} \times \mathcal M_{\cube_\sigma(n)}(f(x), f(y))\) via \(f \colon \mathcal C \to \cube_\sigma(n).\) Formally,
    \[(F)_{x, y} = \pi_0 \times (f)_{x, y}.\]
  \end{itemize}

  This \(F\) admits a uniquely defined inverse, since \((f)_{x, y}\) are trivial covering maps.

  The functors are equivariant because \(C(n)\) has the equivariant flow category structure and the functions \(\phi_{g, v}\) satisfy the group law.
\end{proof}

%%% Local Variables:
%%% mode: latex
%%% TeX-master: "../main"
%%% End:

\subsection{Proof of \cref{combtheorem}}
We are now ready to prove

\combtheorem
\begin{proof}
  \cref{musyt sz musyt} and \cref{sz musyt sz} show that Musyt's and Stoffregen-Zhang's notions of external action on a Burnside functor are equivalent. Likewise, \cref{musyt bps musyt} and \cref{bps musyt bps} give an equivalence of Musyt's external action on a Burnside functor with the notion of an equivariant cubical flow category \((\mathcal C, f \colon \mathcal C \to \cube_\sigma(n)).\)
  
  The equivalence is extended trivially: \((F \colon \two{n} \to \burnside, \psi)\) is the Burnside functor with external action associated to \((\mathcal C, f \colon \mathcal C \to \cube_\sigma(n))\), then to \((\mathcal C, f \colon \susp^V \mathcal C \to \cube_{\sigma}(n))\) we associate \((V, F \colon \two{n} \to \burnside, \psi)\).
  Since the shift by \(V\) plays no role in \cref{bps to musyt} and \cref{musyt to bps}, the full result follows.
\end{proof}

\begin{versiona}
In the remainder of this section, we give a more detailed description of the comparison map which establishes \cref{combtheorem}. This will be of some use in the next section.

The (Stoffregen-Zhang notion of) external action on a Burnside functor \(F \colon \two{n} \to \burnside\) consists of \(1\)-morphism \(\psi_{g, v}\) and \(2\)-morphisms \(\psi_{g, h, v}\) and \(\psi_{g, u, v}\). If they are obtained from an equivariant cubical flow category as above, they can be represented as follows:
\[\psi_{g, v} = \left[f^{-1}(v) \xot{id} f^{-1}(v) \xto{\restrict{\mathcal G_g}{f^{-1}(v)}} f^{-1}(gv),\right]\]
\[\psi_{g, h, v} = TODO\]
\[\psi_{g, u, v} = TODO\]
\end{versiona}

%%% Local Variables:
%%% mode: latex
%%% TeX-master: "../main"
%%% End:

\section{Equivalence of realizations}
\label{sec:equivalence-result}
Building on the comparison map of the previous section, the aim of the following is to prove that the geometric realizations of an equivariant cubical flow category and that of its associated Burnside functor with external action are equivariantly stably homotopy equivalent.
\label{sec: main course}
\subsection{Homotopy coherent diagrams from neat embeddings}
\label{embeddings give hkoh}
We now show how framing a cubical flow category defines a homotopy coherent diagram over the cube, and follow up by showing how the structure of a framed equivariant cubical neat embedding yields an external action on this diagram. The results of this section can be seen as an extension of those contained in the proof of \cite[Theorem 8.]{Lawson_2020} to the equivariant setting.

Suppose we are given an equivariant framed cubical flow category \((f \colon \Sigma^V \mathcal C \to \cube_\sigma(n), \iota)\), where \(\cube_\sigma(n)\) is the topological category with group action by \(\integer_m\), induced from the \(\mathbb Z_m\)-action on \(\two{n} = \two{n'm}\). The extended equivariant cubical neat embedding  \(\tilde{\iota}\) (as in \Cref{e-fne}) furnishes a topological diagram \(\cube_\sigma(n) \to \tops_*\), meaning a homotopy coherent diagram \(\two{n} \to \tops_*\), along with an external action by \(\integer_m\). Namely, we let
% \[F(u) = \coprod_{x \in f^{-1}(u)} B_x /{\partial B_x},\]
\[B_x = \prod_{i = 0}^{\abs{u} - 1} B_R(V)^{d_i} \times \prod_{i = \abs{u}}^{n-1} B_{\varepsilon}(V)^{d_i}
\quad \text{and} \quad
F(u) = \coprod_{f(x) = u} B_x/{\partial \coprod_{f(x) = u} B_x}.\]

In order to define a star map associated to a morphism \(u \to v\) in \(\two{n}\), we consider the equivariant map
\[\bar{\iota}_{x, v} \coloneqq \coprod_{f(y) = v} \bar{\iota}_{x, y} \colon \coprod_{f(x) = u, f(y) = v} \left[\prod_{i=\abs{v}}^{\abs{u}-1} B_{\varepsilon}(V)^{d_i} \right] \times \modulispace_{\mathcal C}(x, y) \to E(V)_{u, v}  = \prod_{i=\abs{v}}^{\abs{u}-1} B_R(V)^{d_i} \times \cube_\sigma(n)(u, v).\]
As \(f_{x, y} \colon \modulispace_{\mathcal C}(x, y) \to \cube(n)(u, v)\) is a finite, trivial covering, there is a diffeomorphism \(\modulispace_{\mathcal C}(x, y) \cong (\cube_\sigma(n)(u, v))^{\pi_0(\modulispace_{\mathcal C}(x, y))}.\) Granted this, we can rewrite \(\bar{\iota}_{x, v}\) as
\[\bar{\iota}_{x, v} \colon \coprod_{\substack{f(y) = v \\\gamma \in \pi_0(\modulispace_{\mathcal C}(x, y))}} \left[\prod_{i=\abs{v}}^{\abs{u}-1} B_{\varepsilon}(V) ^{d_i} \right] \times \cube(n)(u, v) \to E(V)_{u, v},\]
while \ref{framed embedding 1} of \Cref{e-fne} assures that this map is the identity on the \(\cube(n)(u, v)\)-components; hence, by abuse of notation we describe it as a continuous assignment
\[\tilde{\iota}_{x, v} \colon \cube_\sigma(n)(u, v) \to \tops\left(\coprod_{\substack{f(y)= v \\ \gamma \in \pi_0(\modulispace_{\mathcal C}(x, y))}} \prod_{i = \abs{v}}^{\abs{u}-1} B_{\varepsilon}(V)^{d_i}, \enspace \prod_{i=\abs{v}}^{\abs{u}-1}B_R(V)^{d_i}\right)\]
which also respects composition in \(\cube(n)(u, v)\) (essentially due to \ref{framed embedding 3} of \Cref{e-fne}); preserving the notation, we extend by identity to
\[\bar{\iota}_{x, v} \colon \cube_\sigma(n)(u, v) \to \tops\left(\coprod_{\substack{f(y) = v \\ \gamma \in \pi_0(\modulispace_{\mathcal C}(x, y))}} B_y, \enspace B_x\right).\]

Summing over \(x\) with \(f(x) = u\), we write
\[\bar{\iota}_{u, v} = \colon \cube_\sigma(n)(u, v) \to \tops\left(\coprod_{\substack{f(x) = u, f(y) = v \\ \gamma \in \pi_0(\modulispace_{\mathcal C(x, y)})}} B_y, \enspace \coprod_{f(x) = u} B_x\right).\]
Recall the Burnside functor \(F \colon \two{n} \to \burnside\) associated to \(\mathcal C\) in \cref{lls cube to burnside}; this takes value \(F(u) = f^{-1}(u)\) on vertices and associates to an edge \(u \geq v\) in \(\two{n}\) a correspondence
\[F(u, v) = \left[F(u) \ot \pi_0 \left(\coprod_{f(x) = u, f(y) = v} \modulispace_{\mathcal C}(x, y)\right) \to F(v)\right] \colon F(u) \to F(v).\]
By \ref{framed embedding 2} of \Cref{e-fne}, each of the maps \(\bar{\iota}_{u, v}(p)\), \(p \in \cube(n)(u, v)\) represents an element of \(\stars(\{B_x\}, s_{F(u, v)})\), defining a continuous map \(\bar{\iota}_{u, v} \colon \cube(n)(u, v) \to \stars(\{B_x\}, s_{F(u, v)})\). 
Thus, there is an induced continuous family of maps of spheres (of the same dimension \(\sum d_i\)):
\[\tilde{F}(u, v) = \Phi(-, F(u, v)) \circ \bar{\iota}_{u, v} \colon \cube_\sigma(n)(u, v) \to \tops_*\left(\bigvee_{f(x) = u} S_x, \enspace \bigvee_{\substack{f(x) = u, f(y) = v \\ \gamma \in \pi_0(\modulispace_{\mathcal C}(x, y))}} S_y\right).\]
By composing with the fold map induced by canonical identification \(B_y/{\partial B_y} \cong S_y\), we obtain
\({F}(u, v) \colon \cube_\sigma(n)(u, v) \to \tops_*\left(F(u), F(v)\right)\). 
Essentially by \ref{framed embedding 3} of \Cref{e-fne}, the assignments \(F(u, v)\) respect composition in \(\cube_\sigma(n)\), and thus describe a homotopy coherent diagram \(\two{n} \to \tops_*\). 

Moreover, condition \ref{framed embedding 4} on the \(\tilde{\iota}_{x, y}\) furnishes commutative diagrams
\begin{equation*}
  \begin{tikzcd}
   \cube_\sigma(u, v) \arrow{d} \arrow{r}{\bar{\iota}_{u, v}} 
   & \tops(\coprod B_y, \disjoint B_x) \arrow{d} \\
   \cube_\sigma(g \acts u, g \acts v) \arrow{r}[swap]{\bar{\iota}_{gu, gv}}
   & \tops(\coprod B_{g \acts y}, \coprod B_{g \acts x})
  \end{tikzcd}
\end{equation*}
% {(\mathcal G_g)_{u, v}}
in which the right hand vertical arrow is induced by the external action on the Burnside functor \(F\), as associated to \(\mathcal C\) in \cref{bps to musyt}. Applying \(\Phi(-, F(u, v))\) recovers \cref{g-coherent refinement equation}, and so \(\tilde{F}\) is a \(G\)-coherent refinement of \(F\). We have thus shown the following.

\begin{prop}
  Let \((F \colon \two{n} \to \burnside, \psi)\) be the Burnside functor with external action associated to an equivariant cubical flow category \((\mathcal C, f \colon \mathcal C \to \cube_{\sigma}(n))\) by applying \cref{bps to musyt} followed by \cref{musyt to sz}. Then the homotopy coherent diagram \(\tilde{F}_V \colon \two{n} \to \tops_*\) is a \(\mathbb Z_m\)-coherent spatial refinement of \(F\).
\end{prop}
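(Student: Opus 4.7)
The plan is to verify that the construction laid out in the preceding paragraphs indeed meets the two requirements for being a \(\mathbb Z_m\)-coherent spatial refinement: (i) that \(\tilde F_V\) is a homotopy coherent diagram of the shape prescribed in the definition of spatial refinement, and (ii) that it satisfies the equivariance relation \cref{g-coherent refinement equation} with respect to the external action on \(F\) produced by the chain \cref{bps to musyt}\(\to\)\cref{musyt to sz}.

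For (i), first observe that the wedge \(\tilde F(u) = \bigvee_{f(x)=u} S^V\), where \(S^V = B_x/\partial B_x\) for the product of balls \(B_x\), is already of the form required of a spatial refinement with stars \(\{B_x\}\). To produce the morphism data, begin with the framed embedding \(\bar\iota_{x,y}\) and use the fact (from trivial coveredness of \(f_{x,y}\)) that \(\modulispace_{\mathcal C}(x,y)\) splits as \(\pi_0(\modulispace_{\mathcal C}(x,y)) \times \cube_\sigma(n)(u,v)\). This rewriting, together with \ref{framed embedding 1}, shows that \(\bar\iota_{x,v}\) is the identity on its \(\cube_\sigma(n)(u,v)\)-component, so it transposes into a continuous map \(\bar\iota_{u,v} \colon \cube_\sigma(n)(u,v) \to \stars(\{B_x\}, s_{F(u,v)})\); the landing in \(\stars(\cdots)\) rather than in a larger space of embeddings is exactly the content of \ref{framed embedding 2}. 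Post-composing with the star map construction \(\Phi(-, F(u,v))\) gives the components \(\tilde F(u,v)\) taking values in \(\tops_*(\tilde F(u), \tilde F(v))\).

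It remains to verify the compositional identities that make this collection a homotopy coherent diagram in the sense of the topological-diagram reformulation from \cref{section: hkoh-topo}. Here condition \ref{framed embedding 3} supplies compatibility of the \(\bar\iota_{x,y}\) with the composition of moduli spaces, while \cref{star suspension} (and the composition lemma for star maps immediately preceding it) guarantees that the star map construction \(\Phi\) intertwines composition of star arrangements with composition of maps of spheres. Composing these two facts identifies \(\tilde F(v,w) \circ \tilde F(u,v)\) with \(\tilde F(u,w)\) restricted to pairs of composable morphisms in \(\free\two{n}\).

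For (ii), recall that by construction the external action \(\psi\) on \(F\) sends \(x \in F(u) = f^{-1}(u)\) to \(\mathcal G_g(x) \in F(gu)\) and acts on correspondences \(F(u,v)\) via the induced map on path components of moduli spaces. The equivariance condition \ref{framed embedding 4} states precisely that each \(\bar\iota_{x,y}\) intertwines the \(\mathbb Z_m\)-action on source and target. Unpacking along the identifications of the previous two paragraphs, this becomes equivariance of \(\bar\iota_{u,v}\) as a map to \(\stars(\{B_x\}, s_{F(u,v)})\) against the induced \(\mathbb Z_m\)-action on stars given in \cref{g action on sets of boxes}. Applying \(\Phi\) and using its equivariance on fixed points then yields \cref{g-coherent refinement equation}.

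The main bookkeeping obstacle is keeping track of the three kinds of identifications happening simultaneously: the splitting of a moduli space as a product \(\pi_0 \times\) permutohedron; the radial identification of a star with its cell \(B_x\); and the translation of the \(\mathbb Z_m\)-action back and forth between \(\mathcal G_g\), the Musyt bijections \(\phi_{g,v}, \phi_{g,u,v}\), and the Stoffregen--Zhang correspondence-morphisms \(\psi_{g,v}, \psi_{g,u,v}\). Once these identifications are fixed, the four FNE axioms map essentially tautologically onto the axioms required of a \(G\)-coherent spatial refinement, and no further computation is needed.
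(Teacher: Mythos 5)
Your proposal is correct and follows essentially the same route as the paper: the construction preceding the proposition already carries the argument, and you correctly map each of the four FNE axioms onto the corresponding requirement (FNE-1 gives the identity on permutohedron components, FNE-2 the landing in \(\stars(\{B_x\}, s_{F(u,v)})\), FNE-3 the compatibility with composition, FNE-4 the equivariance diagram that upon applying \(\Phi(-,F(u,v))\) recovers \cref{g-coherent refinement equation}). The only minor imprecision is attributing the equivariance of the induced map of spheres to ``equivariance on fixed points'' of \(\Phi\), when the point is simply that \(\Phi\) commutes with the group action on arrangements and on wedge summands; this does not affect the argument.
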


%%% Local Variables:
%%% mode: latex
%%% TeX-master: "../main"
%%% End:

\subsection{Equivalence between BPS- and SZ-realizations}

Building on the results of \Cref{embeddings give hkoh} an equivariant analog of \cite[Theorem 8]{Lawson_2020} is proved here.

\bigtheorem
\begin{proof}

  Choose a framed equivariant cubical neat embeddding \(\bar{\iota}\) of \(\mathcal C\). Consider the homotopy coherent diagram \(\tilde{F}_V \colon \two{n} \to \tops_*\) associated to \(\bar{\iota}\) in \Cref{embeddings give hkoh}. Extend \(\tilde{F}_V\) to \sloppy \({\tilde{F}_V^+ \colon \two{n}_+ \colon \two{n}_+ \to \tops_*}\) by letting \(\tilde{F}_V^+(\ast)\) equal the basepoint. The external action of \(\mathbb Z_m\) on \(\tilde{F}_V\) induces one on \(\tilde{F}_V^+\) whereby \(\hocolim \tilde{F}_V^+\) (rather, its model defined in \Cref{eq: hocolim relations}) becomes a \(G\)-cell complex with cells \(\{C'(x)\}_{x \in F(u), u \in \two{n}}\) of the form
  \[C'(x) =
    \begin{cases}
      \modulispace_{\cube_{\sigma}(n)}(u, \vec{0}) \times [0, 2] \times B_x, & u \neq \vec{0}, \\
      \{0\} \times B_x, & u = \vec{0}.
    \end{cases}
  \]
  The non-equivariant identification is proven in \cite[Proposition 6.1]{Lawson_2020}.  Taking into account the external action, the cell \(C'(x)\) becomes a \(G_{f(x)}\)-space with action split over:
  \begin{itemize}
  \item \(\modulispace_{\cube_{\sigma}(n)}(u, \vec{0})\) as in \Cref{sec: geometric permutohedra},
  \item \(B_x = \prod_{i = 0}^{\abs{u} - 1} B_R(V)^{d_i} \times \prod_{i = \abs{u}}^{n-1} B_{\varepsilon}(V)^{d_i}\) carrying the product action induced from the \(G\)-representation \(V\),
  \item \([0, 2]\), where it is trivial.
  \end{itemize}

  Similarly, \cite[Proposition 3.18]{borodzik2021khovanov} show that \(\cfcrealization{\mathcal C}\) has a \(G\)-cell complex structure with cells
  \begin{align*}
      C(x) & = \cell(x) = \cube_{\sigma}(n)^+(u, \vec{0}) \times \prod_{i=0}^{\abs{u} - 1} B_R(V)^{e_i} \times \prod_{i=\abs{u}}^{n-1} B_\varepsilon(V)^{e_i} \\
           &  = \begin{cases}
             \modulispace_{\cube_{\sigma}(n)}(u, \vec{0}) \times [0, 1] \times B_x, & u \neq \vec{0}, \\
      \{0\} \times B_x, & u = \vec{0}.
    \end{cases}
  \end{align*}
  
  Our comparison map \(\Psi \colon \hocolim \tilde{F}_V^+ \to \cfcrealization{\mathcal C}\) is carries \(C'(x) \to C(x)\) by quotienting \([0, 2] \to [0, 2]/{[1, 2]} \cong [0, 1]\). As per the proof of \cite[Theorem 8]{Lawson_2020}, this constitutes a well-defined map of CW-complexes. Since it has degree \(\pm 1\) on each cell, the homology Whitehead theorem implies that it is a (non-equivariant) stable homotopy equivalence.
  It is also equivariant; the point is that the boxes \(B_x\) are identical in both \(C'(x)\) and \(C(x)\), as \(\tilde{F}_V\) is obtained from \(\tilde{\iota}\) in \cref{embeddings give hkoh}.  It remains to verify that for every subgroup \(H \subseteq G\), the induced map of \(H\)-fixed points \(\Psi^H \colon (\hocolim \tilde{F}_V^+)^H \to \cfcrealization{\mathcal C}^H\) is also a stable homotopy equivalence.

  To see that, note that for any \(H \subseteq G\), \(C(x)^H\) and \(C'(x)^H\) both have the form \(\modulispace_{\cube_{\sigma}}(n)^H \times [0, k] \times B_x^H\) for \(k=1, 2\) respectively. Since
  \(\modulispace_{\cube_{\sigma}}(n)^H\)
  is again a permutohedron \(\modulispace_{\cube_{\sigma}}(n')\) for some \(n' \in \naturals\) (by \cref{prop: permutohedra fixed points}; cf. \cite[Appendix B]{borodzik2021khovanov}), the cells \(C(x)^H\) and \(C'(x)^H\)  (with \(H \cap G_x \neq \emptyset\)) describe CW decompositions of \(\cfcrealization{\mathcal C}^H\), \(\hocolim (\tilde{F}_V^+)^H\), respectively. Thereby \(\Psi^H\) is a stable homotopy equivalence by the same argument as cited for \(\Psi = \Psi^{(0)}\).

\end{proof}

As a consequence of \cite[Proposition 3.27]{borodzik2021khovanov} and \cite[Lemma 5.6]{stoffregen2018localization}, the maps \(\Psi^H\) can be seen as actually realizing the map \(\Psi\) in the above proof for fixed-point cubical flow category \(\mathcal C^H\) and the homotopy coherent diagram of fixed points \(\tilde{F}_V^H\), i.e. a stable homotopy equivalence
\[\Psi \colon \cfcrealization{\mathcal C^H} \to \hocolim (\tilde{F}_V^H)^+.\]

%%% Local Variables:
%%% mode: latex
%%% TeX-master: "../main"
%%% End:

\section{Khovanov spectra of periodic links}
\label{sec:khovanov-periodic}
We recall the constructions of Khovanov homology and Khovanov spectra, as well as their equivariant extensions due to \cite{politarczyk_equivariant_2019}, respectively \cite{stoffregen2018localization} and \cite{borodzik2021khovanov}. We present an application of \cref{bigtheorem} to this case.
\subsection{Khovanov spectra}

Given a link diagram \(D\) with \(N\) crossings (numbered \(1\) to \(N\)), the Kauffman cube of resolutions is defined as follows. For \(v = (v_1, \dotsc, v_n) \in \ob(\two{N})\), change the \(i\)-th crossing \((\tikz[baseline=-.8ex] \node[knot under cross, knot, draw, double=Gray] {};)\) to \((\tikz[baseline=-.8ex] \node[knot vert, knot, draw, double=Gray] {};)\) if \(v_i = 0\) and to \((\tikz[baseline=-.8ex] \node[knot horiz, knot, draw, double = Gray] {};)\) if \(v_i = 1.\)

Consider now the Frobenius algebra
\(\mathcal A = \integer[X]/{(X^2)}\)
with comultiplication \(\Delta \colon \mathcal A \to \mathcal A \tensor \mathcal A\) defined by
\(\Delta(1) = 1 \tensor x + x \tensor 1,\)
\(\Delta(x) = x \tensor x.\)
The Khovanov-Burnside functor \(F =F_{Kh} \colon \two{N} \to \burnside\) is defined as follows:
\begin{itemize}
\item for \(v \in \ob(\two{N})\), 
      \(F(v) = \{1, x\}^{\text{circles in the $v$-resolution of D}}\),
\item a morphism \(u \to v\) in \(\two{N}\) corresponds to a circle being split into two or two circles being merged into one; \(F(u, v)\) is the correspondence applying the comultiplication, respectively multiplication, rule of \(\mathcal A\) to the labellings,
\item for any two chains \(u \to v \to w\), \(u \to v' \to w\) with \(u \geq_2 w\), the \(2\)-morphism
      \[F_{u, v, v', w} \colon F(v, w) \circ F(u, v) \to F(v', w) \circ F(u, v')\]
      consists of bijections 
      \[A_{a, b} \coloneqq s^{-1}_{F(v, w) \circ F(u, v)}(x) \cap t_{F(v, w) \circ F(u, v)}^{-1}(z) \to s_{F(v', w) \circ F(u, v')}^{-1}(x) \cap t_{F(v', w) \circ F(u, v')}^{-1}(z) \eqqcolon A'_{a, b}\]
    for \(a \in F(u)\), \(b \in F(w)\).
    The sets \(A_{a, b}\) and \(A'_{a, b}\) both have \(1\) or \(0\) elements in all but one case. If \(\card A_{x, z} = 2\), then the resolutions along \(u \to v \to w\) split one circle (labeled \(1\) by \(a\)) into two and then merge it back to one (labeled \(x\) by \(b\)); and necessarily, the same can be said about \(u \to v' \to w\). Namely, the morphisms \(u \to w\) correspond to surgery along two edges with endpoints alternating on a single circle \(C_u\). The endpoints cut \(C_u\) into four arcs, among which we distinguish two by the following property: you walk onto them by traveling along one of the surgery edges and turning right. Th two distinghuished arcs are labeled arbitrarily by \(1\) and \(2\), and then the two relevant circles in the \(v\)- and \(v'\)-resolutions are labeled \(C_1, C_2\), respectively \(C'_1, C'_2\). The elements of \(A_{a, b}\) and \(A'_{a, b}\) can then be identified as 
    \[\alpha = (C_u \mapsto 1) \mapsto ((C_1, C_2) \mapsto (1, x)) \mapsto (C_w \mapsto x),\]    \[\beta = (C_u \mapsto 1)\mapsto ((C_1, C_2) \mapsto (x, 1)) \mapsto (C_w \mapsto x),\]
    \[\alpha' = (C_u \mapsto 1) \mapsto ((C'_1, C'_2) \mapsto (1, x)) \mapsto (C_w \mapsto x),\] 
    \[\beta' = (C_u \mapsto 1)\mapsto ((C'_1, C'_2) \mapsto (x, 1)) \mapsto (C_w \mapsto x),\]
    so that \(\restrict{F_{u, v, v', w}}{A_{a, b}}\) can be defined by \(\alpha \mapsto \alpha'\), \(\beta \mapsto \beta'\).
\end{itemize}
By \cref{burnside extension}, the remaining values of \(F\) are determined up to natural isomorphism. For proof that \ref{burnside extension commutativity} of \cref{burnside extension} holds, see \cite[Proposition 6.1]{MR3611723}.

There is a functor \(\integer \langle - \rangle \colon \burnside \to \abelian\), defined as follows: to a set \(X \in \burnside\) associate the free abelian group \(\integer \langle X \rangle\), and to a correspondence \((A, s, t) \colon X \to Y\) the map \(\integer \langle X \rangle \to \integer \langle Y \rangle:\)
\[x \mapsto \sum_{x \in X} \# \{a \in A \suchthat s(a) = x, t(a) = y\} \cdot y.\]
The classical Khovanov homology functor \(\kh \colon \left(\two{n}\right)^{op} \to \abelian\) is the composition \(\kh(D) = \integer \langle - \rangle \circ F_{Kh}\); note that the ladybug matching data encoded by 2-morphisms is forgotten in this composition.

The \emph{Khovanov chain complex} \(\ckh(D)_*\) is defined as the shift of the totalization of the functor; namely, 
\[\ckh_n = \directsum_{\abs{v} = n} \kh(v)[n_-].\]
The differential carries the component \(\kh(u)\) to \(\kh(v)\) by the map
\((-1)^{s_{u, v}} \kh(u, v)\)
if \(u \geq_1 v\), and by the zero map otherwise. The integer \(s_{u, v}\) is defined as
\(\sum_{i=1}^{k-1} u_i,\) where \(u_k\) is the single element in \(\{1, \dotsc, n\}\) with \(u_k = 0\) and \(v_k = 1\).

The complex \(\ckh(D)\) is doubly graded. In addition to the homological grading \(\abs{v} - n_-\), a summand coming from \(\kh(v)\) carries also the \emph{quantum grading}
\[n - 3n_- + \abs{v} + \card\{\text{circles labeled by $1$}\} - \card\{\text{circles labeled by x}\}.\]
Just the same, the Burnside functor \(F\) can be seen as the sum of direct summands \(F_j\) corresponding to quantum gradings.

A result of \cite{Lawson_2020} is that the stable Burnside functor \(\susp^{n_-} F\) has a well-defined realization as a spectrum \(\khovanov(D)\), whose homology is the Khovanov homology. This is the same as our \cref{def stable realization} with \(G\) the trivial group.

%%% Local Variables:
%%% mode: latex
%%% TeX-master: "../main"
%%% End:

\subsection{Periodic links}
An \emph{\(m\)-periodic link} is one invariant under a rotation of the sphere of order \(m\), and disjoint from the axis of that rotation. We will give a digest of the constructions of equivariant Khovanov homotopy types due to \cite{stoffregen2018localization} and \cite{borodzik2021khovanov}. For a given link, there may be more than one such rotation, defining to distinct equivariant spectra; hence, the rotation is fixed at the outset. 

For the following, let \(D\) be a link diagram with \(N = nm\) crossings, invariant under a rotation of the plane \(\rho\) of order \(m\), such that \(\rho(D) = D.\) Consequently, there is an action on the cube of resolutions, which upon numbering the crossings takes the form of the natural \(\mathbb Z_m\)-action on \((\two{n})^m \cong \two{nm}\) as in \cref{section: the cube}.

\Cite[Proposition 6.2]{stoffregen2018localization} construct an external action of \(\mathbb Z_m\) on the Khovanov-Burnside functor \(F_{Kh} \colon \two{n} \to \burnside\) using \cref{external burnside simplification}. The construction is forced in almost all cases by the group action on \((\two{n})^m\) and the non-equivariant \(F\) itself. The exceptional case is that of ladybug configurations, and the well-definedness of the action follows from the fact that ladybug configurations are invariant under planar isotopy. In parallel, \cite[Proposition 4.6]{borodzik2021khovanov} use a simplification result \cite[Lemma 3.8]{borodzik2021khovanov} and construct moduli spaces inductively, with all steps but the one pertaining ladybug configurations already forced.

It is clear that the two constructions are related by the equivalences presented here in \cref{bps-musyt-sz}. The results of \Cref{sec: main course} imply the following.

\begin{theorem}
  The equivariant stable homotopy types \(\cfcrealization{\mathcal C}\) and \(\hocolim \tilde{F}_{V}^+\) associated to a periodic link by \cite{borodzik2021khovanov} and \cite{stoffregen2018localization}, respectively, are equivariantly stably homotopy equivalent. The equivalence can be realised as \(\Sigma^\infty \Psi\), where \(\Psi\) is a cellular map depending on a choice of extended equivariant cubical framed embedding \(\tilde{\iota}\) of \(\mathcal C\).
\end{theorem}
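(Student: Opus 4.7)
The plan is to deduce the theorem as a direct consequence of \cref{combtheorem} and \cref{bigtheorem}, once one checks that the two periodic-link constructions produce data which correspond to each other under the equivalence established in \cref{combtheorem}. Both \cite{borodzik2021khovanov} and \cite{stoffregen2018localization} start from the same input (a periodic diagram $D$ with $N=nm$ crossings, the cyclic $\integer_m$-action on $\two{N}\cong(\two{n})^m$, and the Khovanov--Burnside functor $F_{Kh}$), and in both constructions every choice is forced by the non-equivariant Khovanov data except the ladybug step. Hence the comparison reduces to checking that the ladybug matchings agree.

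First I would set up the correspondence explicitly. Take the equivariant cubical flow category $(\mathcal C_{BPS},f\colon\susp^V\mathcal C_{BPS}\to\cube_\sigma(N))$ of \cite{borodzik2021khovanov}; apply \cref{bps to musyt} to produce Musyt data of an external action on the associated Burnside functor (as in \cref{lls cube to burnside}); then apply \cref{musyt to sz} to produce a Stoffregen--Zhang external action. I would verify that this chain of translations, starting from $\mathcal C_{BPS}$, yields exactly the external action on $F_{Kh}$ defined in \cite[Proposition 6.2]{stoffregen2018localization}. On objects and on $1$-morphism spaces the identification is forced by the action of $\integer_m$ on $(\two{n})^m$ and by the non-equivariant $F_{Kh}$, and on non-ladybug $2$-morphisms both sides recover the unique bijection dictated by the merge/split rule for $\mathcal A=\integer[X]/(X^2)$. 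The only content is then at the ladybug $2$-cells.

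For the ladybug comparison I would argue that both constructions determine the matching by the same geometric data, namely the right-turn rule on the planar diagram $D$. On the BPS side, the ladybug matching is encoded in the moduli space $\modulispace_{\mathcal C_{BPS}}(x,y)$ at a $2$-dimensional face and is invariant under planar isotopy fixing the rotation axis; on the SZ side it is encoded by the isomorphism $F_{u,v,v',w}$ and is likewise determined by the same local picture. Since the $\integer_m$-action on $D$ is by a planar rotation, the equivariance constraint is compatible with either assignment, and the two matchings coincide.

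Once this correspondence is established, \cref{combtheorem} shows that the equivariant cubical flow category $\mathcal C_{BPS}$ and the stable Burnside functor with external action $(V,F_{Kh},\psi)$ are equivalent packages of data, whence \cref{bigtheorem} applies and yields the equivariant stable homotopy equivalence $\cfcrealization{\mathcal C_{BPS}}\simeq\abs{F_{Kh}}=\hocolim\tilde F_V^+$. The cellular map $\Psi$ realising it is the one constructed in the proof of \cref{bigtheorem}, obtained from a choice of framed equivariant cubical neat embedding $\tilde\iota$ of $\mathcal C_{BPS}$ via the $\mathbb Z_m$-coherent spatial refinement $\tilde F_V$ built in \cref{embeddings give hkoh}. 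The main obstacle I would expect is the bookkeeping verification of the ladybug compatibility: even though the underlying geometric rule is the same, one must trace through the three-step translation Musyt\,$\to$\,SZ and BPS\,$\to$\,Musyt to see that the explicit bijections $F_{u,v,v',w}$ and the $2$-morphisms $\psi_{g,u,v}$ obtained from $\mathcal C_{BPS}$ agree with those defined directly by \cite{stoffregen2018localization}; modulo this check, the theorem is formal from the previous two sections.
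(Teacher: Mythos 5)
Your proposal takes essentially the same route as the paper: deduce the theorem from \cref{combtheorem} and \cref{bigtheorem}, after checking that the BPS and SZ constructions produce corresponding data under the translations of \cref{bps-musyt-sz}, with the only nontrivial matter being the ladybug step. The paper in fact dispatches this step even more tersely than you do --- it simply asserts that ``it is clear that the two constructions are related by the equivalences presented here'' --- so your extra care in proposing to trace the ladybug matchings through \cref{bps to musyt} and \cref{musyt to sz} is a welcome sharpening, but not a different strategy.
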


Let \(\mathbb F\) be a field. Up to chain homotopy, the Khovanov complex \(\ckh(D; \mathbb{F})\) can be equipped with an action of \(\mathbb Z_m\), whereby it can be seen as a \(\mathbb{F}[\mathbb{Z}_m]\)-module. In \cite{politarczyk_equivariant_2019}, Politarczyk defined equivariant Khovanov homology with coefficients in a \(\mathbb{F}[\mathbb{Z}_m]\)-module \(M\) by
\[\ekh^{j, q}(D; M) = \ext^j_{\mathbb{F}[\mathbb{Z}_m]}(M; \ckh^{\bullet, q}(D; \mathbb{F})).\]
In \cite[Theorem 8.3]{borodzik2021khovanov} it is proved that \(\cfcrealization{\mathcal C}\) realizes this notion of equivariant Khovanov homology via Borel cohomology.

\begin{corollary}
  Let \(D\) be an \(m\)-periodic link diagram and \(F \colon (\two{n})^m \to \burnside\) the associated Burnside functor with external group action of \(\mathbb{Z}_m\), admitting an equivariant spatial refinement with respect to representation \(V\). For any \(\mathbb{F}[\mathbb{Z}_m]\)-module \(M\), the equivariant Khovanov homology \(\ekh^{j, q}(D; M)\) is isomorphic to the reduced Borel cohomology of \(\hocolim \tilde{F}_V^+\):
  \[\ekh^{j, q}(D; M) \cong \mathrm{H}_{\mathbb{Z}_m}^*(\hocolim \tilde{F}_V+, \homs_{\mathbb{F}}(M, \mathbb{F})).\]
\end{corollary}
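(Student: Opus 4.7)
The plan is to derive this corollary as an immediate combination of \cref{bigtheorem} with \cite[Theorem 8.3]{borodzik2021khovanov}, using that reduced Borel cohomology is an invariant of equivariant stable homotopy type. The first step is to confirm that the equivariant cubical flow category $\mathcal C$ associated to $D$ by \cite{borodzik2021khovanov} and the Burnside functor with external action $(F, \psi)$ associated to $D$ by \cite{stoffregen2018localization} correspond under \cref{combtheorem}. This is essentially the content of the paragraph preceding the statement: both constructions are forced, outside ladybug configurations, by the non-equivariant Khovanov-Burnside functor together with the standard $\mathbb Z_m$-action on the cube of resolutions, while the two prescriptions for the ladybug matching coincide by planar-isotopy invariance.

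Having established this correspondence, I would apply \cref{bigtheorem} to obtain an equivariant stable homotopy equivalence $\cfcrealization{\mathcal C} \simeq \hocolim \tilde F_V^+$ (after the implicit formal desuspensions on each side). Because reduced Borel cohomology is an invariant of equivariant stable equivalence, taking it with coefficients in the $\mathbb F[\mathbb Z_m]$-module $\homs_{\mathbb F}(M, \mathbb F)$ produces a natural isomorphism
\[
\mathrm H^*_{\mathbb Z_m}(\cfcrealization{\mathcal C}, \homs_{\mathbb F}(M, \mathbb F)) \cong \mathrm H^*_{\mathbb Z_m}(\hocolim \tilde F_V^+, \homs_{\mathbb F}(M, \mathbb F)).
\]
The last step is to invoke \cite[Theorem 8.3]{borodzik2021khovanov}, which identifies the left-hand side with $\ekh^{j,q}(D; M)$ in the appropriate quantum grading.

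The only real work, which I expect to be routine bookkeeping rather than a substantive obstacle, is tracking the wedge decomposition by quantum grading $q$ along with the formal desuspension by the virtual representation $V$ appearing in \cref{def stable realization}. Both features are compatible with the equivalence of \cref{bigtheorem}: the comparison map $\Psi$ built in its proof is cellular, and its cells are indexed by the sets $F_{Kh}(v)$, which themselves split along the quantum-grading decomposition $F_{Kh} = \bigvee_q F_{Kh, q}$. Consequently the identification of Borel cohomology descends wedge-summand by wedge-summand, yielding the stated isomorphism in each bidegree $(j, q)$.
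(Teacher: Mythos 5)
Your proposal is correct and follows exactly the route the paper takes: the corollary is the immediate combination of the preceding theorem (equivariant stable equivalence of $\cfcrealization{\mathcal C}$ and $\hocolim \tilde F_V^+$, itself a specialization of \cref{bigtheorem} via \cref{combtheorem}) with \cite[Theorem 8.3]{borodzik2021khovanov} identifying the Borel cohomology of $\cfcrealization{\mathcal C}$ with $\ekh^{j,q}(D;M)$. Your extra remarks about the quantum-grading wedge decomposition and the formal desuspension by $V$ are routine bookkeeping that the paper leaves implicit, and they do go through for the reasons you give.
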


%%% Local Variables:
%%% mode: latex
%%% TeX-master: "../main"
%%% End:

\printbibliography

\end{document}